\documentclass[12pt,reqno]{amsart}
\usepackage[includehead,includefoot,margin=19mm]{geometry}
\usepackage[latin1]{inputenc}
\usepackage{amssymb}
\usepackage{amsmath}
\usepackage{amsfonts}
\usepackage{mathrsfs}
\usepackage{hyperref}
\usepackage[all]{xy}  
\usepackage{verbatim}
 \usepackage{graphicx}

\theoremstyle{plain}
\newtheorem{theorem}{Theorem}[section]

\newtheorem{lemma}[theorem]{Lemma}
\newtheorem{proposition}[theorem]{Proposition}

\newtheorem{corollary}[theorem]{Corollary}
\newtheorem*{theorem*}{Theorem}

\theoremstyle{definition}
\newtheorem{definition}[theorem]{Definition}

\newtheorem{example}[theorem]{Example}

\theoremstyle{remark}

\newtheorem{remark}[theorem]{Remark}

%rings and fields of numbers
\def\ZZ{{\mathbb Z}}

\def\QQ{{\mathbb Q}}
\def\CC{{\mathbb C}}

%torus
\def\TT{{\mathbb{T}}}
%group

%rank

%dimension
\def\dim{{\rm dim}}
%codimension
\def\codim{{\rm codim}}
%degree
\def\deg{{\rm deg}}
%order at a point of a rational function

%projective space
\def\P{{\mathbb{P}}}
%affine space
\def\AA{{\mathbb{A}}}
%affine scheme
\def\spec{{\rm Spec}}
%local system

% structural sheaf

%local system
\def\L{{\mathscr{L}}}
%intersection cohomology

%sheaf
\def\A{{\mathcal{A}}}
%polyhedral divisor
\def\D{{\mathfrak{D}}}
%Cartier divisors
\def\Ca{{\rm CaDiv}}
%toric variety
\def\X{V}
%toric variety
\def\Z{Z}
%closed subset
\def\CS{{\rm CS}}
%set of orbits
\def\Orb{\mathcal{O}}
%hypercohomology
\def\H{\mathcal{H}}
%limit
\def\lim{{\rm lim}}
%local system 
\def\L{\mathscr{L}}
%support 
\def\supp{{\rm Supp}}
%set of p-divisors

%divisorial fans
\def\EE{\mathscr{E}}
%cone
\def\cone{{\rm Cone}}
%poset
\def\pos{\mathscr{P}}
%interval
\def\int{{\rm Int}}
%incidence
\def\inc{\mathscr{A}}

\def\PPDiv{{\rm PPDiv}_{\QQ}}

\author{Marta Agustin Vicente}
\address{Basque Center of Applied Mathematics (BCAM)\\  Mazarredo, 14. 48009 Bilbao \\ Spain}
\email{martaav22@gmail.com} 
\author{Kevin Langlois}
\address{Departemento de Matem\'atica \\ Universidade Federal do Cear\'a (UFC) \\ Campus do Pici, Bloco 914, CEP 60455-760. Fortaleza-Ce, Brasil}
\email{kevin.langlois@mat.ufc.br} 
\title[Intersection cohomology of $\TT$-varieties]
{Decomposition theorem and  torus \\ actions of complexity one}

\begin{document}
\begin{abstract}
We algorithmically compute the intersection cohomology Betti numbers of any complete normal algebraic variety with a torus action of complexity one.
\end{abstract}

\maketitle

\setcounter{tocdepth}{1}
\tableofcontents
\section{Introduction}
This paper investigates the intersection cohomology for complete normal algebraic varieties equipped with a \emph{complexity-one torus action} (i.e., a torus action whose general orbits are of codimension one). We algorithmically compute the intersection cohomology Betti numbers of these varieties (see Theorem \ref{t-hvector}) in the language of polyhedral divisors. In particular, our main result generalizes the classical case of complete toric varieties treated in \cite{Sta87, DL91, Fie91}.

Let us introduce some notation in order to formulate and explain our results.
We will be working with a complex torus $\TT= (\CC^{\star})^{n}$ and a normal algebraic variety $X$
over $\CC$  endowed with an algebraic faithful $\TT$-action  of complexity one.
To perform our computation, we use in this article the geometric and combinatorial approach developed in \cite{Tim08, AH06, AHS08, AIPSV12}, which extends into our setting the well-known classification
of toric varieties by their defining fans. 

More precisely, this description 
can be briefly explained as follows (see Section \ref{sec-r-tvar} for a detailed exposition). 
As a first ingredient, we need to consider the smooth projective curve $Y$ parameterizing the general $\TT$-orbits, i.e., the curve $Y$ comes with an invariant rational map $\iota:X\dashrightarrow Y$ giving rise to an identification of function fields
$\CC(Y)\simeq\CC(X)^{\TT}$. Then, one remarks that the local structure of the torus action restricts to the affine case (see \cite[Section 3, Corollary 2]{Sum74}), namely, there exists a finite covering $(X_{i})_{i\in I}$ of $X$ by affine $\TT$-stable Zariski open subsets. The main result in \cite{AH06} states that 
the coordinate ring of the affine chart $X_{i}$ is built from
 a graded sheaf of $\mathcal{O}_{Y_{i}}$-algebras 
$$\A_{i} = \bigoplus_{u\in \sigma^{\vee}_{i}\cap M}\mathcal{O}_{Y_{i}}(\D^{i}(u)),$$
where $\sigma^{\vee}_{i}$ is a full-dimensional polyhedral cone living in the $\QQ$-vector space associated with the character lattice $M\simeq \ZZ^{n}$ of the torus $\TT$.
Moreover, the variety $Y_{i}$ is a dense Zariski open subset of $Y$ and the arrow
$$\sigma^{\vee}_{i}\rightarrow \Ca(Y_{i}),\,u\mapsto \D^{i}(u)$$
is a certain piecewise linear map (called a \emph{polyhedral divisor}) sending a vector to a $\QQ$-Cartier divisor over $Y_{i}$ and satisfying some positivity conditions (see \cite[Section 2]{AHS08} for more information). Finally, the $\TT$-variety $X_{i}$ is then identified with the global spectrum of the $M$-graded ring $\Gamma(Y_{i}, \A_{i})$.

One may additionally  choose the $\D^{i}$'s in a such way 
that the relative spectra $\spec_{Y_{i}}\, \A_{i}$ glue together into a normal $\TT$-variety $\tilde{X}$.
The resulting morphism $\pi:\tilde{X}\rightarrow X$ is called the \emph{contraction map}. It is known that $\pi$ is a proper birational morphism and that the total space $\tilde{X}$ only depends  on $X$, and neither on the choice of the open covering, or on the choice of the sheaves $\A_{i}$. Also the quotient map $\gamma: \tilde{X}\rightarrow Y_{0}$ onto a dense open subset $Y_{0}\subseteq Y$ yields a resolution of the indeterminacy locus of  the rational map $\iota:X\dashrightarrow Y$. 
 More precisely, Vollmert proved that the contraction space $\tilde{X}$ is in fact the normalization of the graph of the rational quotient map 
$X\dashrightarrow Y$ (compare \cite[Section 3, Lemma 1]{Vol10} and see also \cite{CP13}).

Note that the above geometric and combinatorial description can be reversed by defining a normal $\TT$-variety
of complexity one via a finite set of compatible polyhedral divisors (called a \emph{divisorial fan}, see \cite[Section 5]{AHS08}).
It is worthwhile mentioning that the  variety $\tilde{X}$ is toroidal in the 
sense of \cite[Chapter IV]{KKMS73}. Thus, we may expect to compute the intersection
cohomology of $\tilde{X}$ by using toric methods. Based on the work of de Cataldo, Migliorini, and Musta\c{t}\u{a} \cite{CMM15}, this case was done 
in \cite{AL17} (see also \ref{r-contfree}). Therefore, the next step consists to relate the intersection cohomology of $X$ and $\tilde{X}$ via the decomposition theorem of  Beilinson, Bernstein, Deligne and Gabber, see \cite[Theorem 6.2.5]{BBD82}. 

In this article,  we complete this step by providing  (after some reductions) an explicit description of the intersection cohomology of $X$ in terms of the one of $\tilde{X}$ and the (shifted) simple perverse sheaves appearing in the decomposition theorem of the map $\pi$ (combine Theorem \ref{t-main} and Theorem \ref{t-hvector}). 

For a $d$-dimensional complex algebraic variety $Z$,
we denote by $IC_{Z}$ the intersection cohomology complex with respect to the middle perversity (see Section \ref{sec-ic-conv} for more details) and by $IH^{i}(Z; \QQ)$ the intersection cohomology groups. Recall that $Z$ is a \emph{rationally smooth} variety if
for any $x\in Z$, we have that $H^{d}_{x}(Z;\QQ) = \QQ$ and $H^{i}_{x}(Z; \QQ)= 0$ for $i\neq d$, where $H^{\star}_{x}(Z;\QQ) = H^{\star}(Z, Z\setminus\{x\};\QQ)$ is the rational cohomology with support in $\{x\}$. Remark that rationally smooth varieties include quotients of smooth algebraic varieties by finite group actions (see \cite[Proposition A1]{Bri99}). 

Our first result below asserts that (up to a finite group action)  the local systems in the decomposition theorem for the map $\pi$ are trivial and supported on certain torus orbits.
\begin{theorem}\label{t-main}
Let $X$ be a normal complex algebraic variety with a torus action of complexity one.
Let $\pi:\tilde{X}\rightarrow X$ be the contraction map. Denote by $E\subseteq X$ the image of the exceptional locus of $\pi$ and let $\Orb(E)$ be the set of torus orbits in $E$. Then the following assertions hold.
\begin{itemize}
\item[(i)] There exists a finite subgroup $G\subseteq \TT$ such that we have a commutative diagram
 $$\xymatrix{
      \tilde{X}\ar[r]^{\pi} \ar[d]  & X \ar[d]\\ \tilde{X}/G \ar[r]^{\pi_{G}} & X/G,}$$
where the horizontal arrows $\pi$ and $\pi_{G}$ are contraction maps and the vertical ones are the quotients by the $G$-actions,
and with the further condition that
the image $E/G$ of the exceptional locus of $\pi_{G}$ have connected stabilizers. Moreover, we have the $\QQ$-vector space isomorphims 
$$IH^{j}(X; \QQ) \simeq IH^{j} (X/G; \QQ) \text{ and } IH^{j}(\tilde{X}; \QQ) \simeq IH^{j} (\tilde{X}/G; \QQ) \text{ for any }j\in \ZZ.$$ 
\item[(ii)] If the stabilizers of the points of  $E$ are connected, then there is an isomorphism
$$\pi_{\star}IC_{\tilde{X}}\simeq IC_{X}\oplus\bigoplus_{O\in\Orb(E)}\bigoplus_{b\in\ZZ}(\iota_{O})_{\star}IC_{\bar{O}}^{\oplus s_{b,O}}[-b],$$ 
where the $s_{b,O}\in \ZZ_{\geq 0}$ are  $0$ for all but finitely many $b\in\ZZ$ and $O\in\Orb(E)$. Here $\bar{O}$ is the Zariski closure of the orbit $O$ and $\iota_{O}:\bar{O}\rightarrow X$ is the natural inclusion.
\item[(iii)]  If $\tilde{X}$ is rationally smooth, then 
$$\pi_{\star}IC_{\tilde{X}}\simeq IC_{X}\oplus\bigoplus_{O\in\Orb_{2}(E)}(\iota_{O})_{\star}IC_{\bar{O}},$$
where $\Orb_{2}(E)$ is the set of codimension-two torus orbits in $E$.  
\end{itemize}
\end{theorem}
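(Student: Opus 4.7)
For (i), I would first note that the combinatorial presentation of $X$ by a divisorial fan ensures that only finitely many stabilizer subgroups occur as isotropies of torus orbits, and each is the product of a subtorus and a finite abelian group. Take $G\subseteq\TT$ to be a finite subgroup containing every finite component of a stabilizer of a point of $E$. Since the contraction space is canonically associated with $X$ (as the normalization of the graph of the rational quotient $X\dashrightarrow Y$), its construction commutes with the $G$-quotient, producing the diagram with $\pi_{G}$ again a contraction map; in $X/G$ the stabilizers along $E/G$ then have trivial component group. For the cohomology identification I would apply the standard fact that a finite quotient gives $IH^{j}(X/G;\QQ)\simeq IH^{j}(X;\QQ)^{G}$, and observe that $G\subseteq\TT$ acts trivially on $IH^{\star}$ because it lies in the connected group $\TT$; the same argument applies to $\tilde{X}$.

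For (ii), the map $\pi$ is projective (it factors through the projection $X\times Y\to X$, as $\tilde{X}$ is the normalization of the graph in $X\times Y$) and birational. Applying the Beilinson--Bernstein--Deligne--Gabber decomposition theorem yields
$$\pi_{\star}IC_{\tilde{X}}\simeq\bigoplus_{j}IC(\overline{W_{j}};L_{j})[-b_{j}]$$
for irreducible supports $W_{j}$, simple local systems $L_{j}$, and integer shifts $b_{j}$. Because $\pi$ is $\TT$-equivariant, the decomposition can be chosen $\TT$-equivariantly, so each $W_{j}$ is a single torus orbit and each $L_{j}$ is a $\TT$-equivariant local system. Exactly one summand has full support $X$, namely $IC_{X}$ with multiplicity one and no shift, because $\pi$ restricts to an isomorphism over $X\setminus E$. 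The remaining supports lie in $\Orb(E)$. Under the hypothesis that the stabilizers of points of $E$ are connected, every $\TT$-equivariant local system on such an orbit $\TT/H$ is trivial, since it corresponds to a representation of the finite component group $\pi_{0}(H)=1$. Thus $L_{j}=\QQ_{W_{j}}$, giving the stated formula.

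For (iii), rational smoothness yields $IC_{\tilde{X}}=\QQ_{\tilde{X}}[\dim\tilde{X}]$, so one must compute $R\pi_{\star}\QQ_{\tilde{X}}$ and match it with the form produced by (ii). The plan is to exploit the fine fibre structure of the contraction in complexity one: the exceptional locus $\tilde{E}\subseteq\tilde{X}$ is a divisor whose fibres over points of $E$ are trees of rational curves, and the rational-smoothness hypothesis constrains these combinatorics enough to force $E$ to be of pure codimension two in $X$, with $\pi^{-1}(x)\simeq\mathbb{P}^{1}$ for every $x\in E$. Combining this fibre computation with the relative hard Lefschetz symmetry $s_{b,O}=s_{-b,O}$ and a stalk inspection $(R\pi_{\star}\QQ_{\tilde{X}})_{x}\simeq H^{\star}(\mathbb{P}^{1};\QQ)$ for $x\in O\in\Orb_{2}(E)$ pins down $s_{0,O}=1$ and $s_{b,O}=0$ for $b\neq 0$. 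The main obstacle will be translating the combinatorial characterisation of rational smoothness of $\tilde{X}$ into the concrete geometric statement that $\pi|_{\tilde{E}}\colon\tilde{E}\to E$ is a $\mathbb{P}^{1}$-bundle over a pure-codimension-two stratum; this is where I expect to need the most delicate input from the polyhedral-divisor theory.
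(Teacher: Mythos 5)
Your treatment of parts (i) and (ii) is essentially sound. Part (i) follows the paper's route (quotient by a finite group generated by the component groups of the stabilizers along $E$, Kirwan's isomorphism $IH^{j}(X/G;\QQ)\simeq IH^{j}(X;\QQ)^{G}$, triviality of the $G$-action because $G$ sits in the connected group $\TT$); the only thing you gloss over is that the triviality of the action on $IH^{\star}$ itself requires a reduction to the smooth case via equivariant resolution and the decomposition theorem. Part (ii) takes a genuinely different route: you invoke $\TT$-equivariance of the decomposition to force the supports to be orbit closures and the local systems to be representations of $\pi_{0}(\TT_{x})=1$, whereas the paper establishes a local product structure $\tilde{X}_{O}\simeq\TT_{O}\times\tilde{X}_{1}$ (Lemma \ref{l-structure}, valid exactly because the stabilizers are connected) and deduces by proper base change that every cohomology sheaf of $\pi_{\star}IC_{\tilde{X}}$ is constant on each orbit (Lemma \ref{l-loc}). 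Your route is workable but you should justify why the simple constituents of the BBD decomposition can be taken equivariant (connectedness of $\TT$ and the full faithfulness of equivariant perverse sheaves inside perverse sheaves); the paper's route avoids this machinery.

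Part (iii) contains a genuine gap: your plan rests on the claim that rational smoothness of $\tilde{X}$ forces $E$ to be of pure codimension two with $\pi^{-1}(x)\simeq\P^{1}$ for all $x\in E$, the exceptional fibres being ``trees of rational curves''. This is false on all counts. First, $E$ may well contain orbits of codimension $>2$ (closed orbits in the closures of the codimension-two orbits); the content of (iii) is not that such orbits are absent but that they contribute no summands. Second, by Lemma \ref{l-fiber} the fibre $\pi^{-1}(x)$ is an \emph{irreducible} smooth projective curve (a quotient of the curve $Y$ parameterizing general orbits), not a tree of rational curves, and it need not be rational when $Y$ has positive genus. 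The correct mechanism, which your proposal misses entirely, is \emph{semi-smallness}: since every fibre of $\pi$ has dimension at most one and the positive-dimensional fibres lie over orbits of codimension at least two (Lemma \ref{l-semi}), the semi-small decomposition theorem (Theorem \ref{theo-semi-decomp}) applies once $IC_{\tilde{X}}=\QQ_{\tilde{X}}[\dim\tilde{X}]$. It automatically kills all shifts $[-b]$ with $b\neq 0$ and restricts the supports to the relevant strata, which are exactly the codimension-two orbits of $E$ (Remark \ref{r-str}); the local systems are then $y\mapsto H^{2}(\pi^{-1}(y);\QQ)\simeq\QQ$ by irreducibility of the fibres. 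Your proposed substitute (relative hard Lefschetz plus a stalk inspection) cannot get off the ground as stated because the geometric picture it relies on does not hold.
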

The presence of the finite group $G$ in Theorem \ref{t-main} $(i)$ simply means that we are changing the ambient lattice $M$ 
in the combinatorics of polyhedral divisors by a smaller one of a finite index. Geometrically, the group $G$ can be chosen as the subgroup
of $\TT$ generated by the groups of connected components of the stabilizers $\TT_{x}$, where $x$ runs over $E$ (this generates a finite group 
since $E$ has finitely many orbits, see \ref{l-compl}).
 Moreover, note that Statement $(i)$ in Theorem \ref{t-main} is a reduction of our problem of computing the intersection cohomology Betti numbers to the case where the points of $E$ have connected stabilizers. This technical assumption on the connectedness of the stabilizers will be taken for the rest of the introduction when we will speak about the decomposition theorem for the map $\pi$.

We now  describe the content of each section and explain how to calculate the intersection cohomology Betti numbers of any complete normal variety with torus action of complexity one. Section \ref{sec-prem} is devoted to introduce the necessary backgrounds on $\TT$-varieties and on intersection cohomology. We also introduce, in this section, the concept of Seifert torus bundles
which plays a key role in many places of the paper. 
In Section \ref{s-one}, we exhibit a natural stratification that makes $\pi$ a topological fibration over each stratum (see Lemma \ref{l-stratum}). We then show that $\pi$ is a semi-small map (see Lemma \ref{l-semi}). 
We prove Theorem \ref{t-main}  in Section \ref{s-two}. 

As we can see, the third part of the statement of Theorem \ref{t-main} is more explicit.
This is due to the decomposition theorem for semi-small maps (see for instance \cite[Theorem 8.2.36]{HTT08}). This result applies a priori
in the case where the variety $\tilde{X}$ is rationally smooth (e.g., when $\tilde{X}$ is a surface). But most of the time, the variety $\tilde{X}$ does not satisfy this assumption.
Thus, in order to bypass the difficulty of precisely determining the multiplicities $s_{b,O}$ of Theorem \ref{t-main} $(ii)$, in  Section \ref{s-three}, we study the stalks of the intersection cohomology sheaves on both sides of the decomposition theorem for the map $\pi$. This determines the intersection cohomology groups of every affine normal $\TT$-variety of complexity one with a unique attractive fixed point (see Proposition \ref{t-fixedpointt} and Example \ref{e-elliptic}) and the ones of a relative spectrum
associated with a polyhedral divisor having full-dimensional tail cone (see Theorem \ref{g-vectorth} and Section \ref{sec-r-tvar} for the notion of sheaf associated with a polyhedral divisor). 

Note that in order to prove Theorem \ref{g-vectorth}, we use results in \cite{BM99} which rely on a sheaf realization of the relative $g$-invariants of an affine toric variety.  
Inspired by the techniques developed in \cite[Section 7]{CMM15},
we finally sum up all our computations in Theorem \ref{t-hvector} where 
the $s_{b,O}$'s are obtained via an induction process on the dimension and from an inversion formula in the incidence algebra of the poset of orbits of $E$. 

As an application,
 we treat the cases of  dimensions $2$ and $3$.
For surfaces, we recover the work of Fieseler and Kaup \cite{FK86} (for rational coefficients), see Example \ref{ex-surf} and Example \ref{e-surface2}. In their work, they
describe the intersection cohomology Betti numbers of a complete normal $\CC^{\star}$-surface in terms of the set of elliptic fixed points. This set is in fact  the image of the exceptional locus of the contraction map. Thus, the description of the  intersection cohomology for torus actions of complexity one  in terms of the geometry of the contraction map might be seen as a generalization of \cite{FK86} in higher dimension.  For threefolds, we obtain the following result (see Theorem \ref{theore-dim3}). We recall that the \emph{Poincar\'e polynomial} of a complex algebraic variety $Z$ is the polynomial $P_{Z}(t) = \sum_{i = 1}^{2d}b_{i}(Z)t^{i}$, where $d = \dim\, Z$ and $b_{i}(Z) =  \dim\, IH^{i}(Z; \QQ)$. 
\begin{theorem}
Let $X$ be a complete normal threefold with a faithful action of $\TT = ( \CC^{\star})^{2}$, and with defining divisorial fan
$\EE$ over the smooth projective curve $Y$. Let $E$ be the image of the exceptional locus of the contraction map $\pi: \tilde{X}\rightarrow X$. Then
the Poincar\'e polynomial $P_{X}(t)$ is given by the formula
$$P_{X}(t) = ((1-r)t^{2} + 2\rho_{g}(Y)t + 1-r)(t^{4} + (\delta(\Sigma(\tilde{\EE}))-2)t^{2} + 1)$$ 
$$+ \sum_{y\in \supp(\tilde{\EE})}(t^{6} + (\delta(\tilde{\EE}_{y}) - 3)t^{4} + (\delta(\tilde{\EE}_{y}) - 3)t^{2} + 1) - |\mathcal{O}_{2}(E)|(t^{2} +1)t^{2},$$
where for a fan $\Sigma$ we denote by $\delta(\Sigma)$ the number of its rays and $\rho_{g}(Y)$ is the genus of the curve $Y$. 
Moreover, $ \Sigma(\tilde{\EE})$ (respectively, $\supp(\tilde{\EE})$) is the fan of the general fiber (respectively, the set of the points of $Y$ that give rise to 
a special fiber) of the quotient map $$\gamma: \tilde{X}\rightarrow Y\text{ and }r  = |\supp(\tilde{\EE})|.$$ Finally, the symbol $\tilde{\EE}_{y}$ stands for a $3$-dimensional fan describing the fiber of $\gamma$ at $y$.
\end{theorem}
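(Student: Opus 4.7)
The plan is to combine Theorem \ref{t-main} with an explicit computation of $P_{\tilde{X}}(t)$ coming from the toroidal quotient map $\gamma\colon \tilde{X}\to Y$. By Theorem \ref{t-main}(i), passage to a finite quotient leaves intersection cohomology invariant, so throughout I may assume the points of $E$ have connected stabilizers.

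I first compute $P_{\tilde{X}}(t)$ by stratifying $\tilde{X}$ along $\gamma$ as $\gamma^{-1}(Y\setminus\supp(\tilde{\EE}))\sqcup\bigsqcup_{y\in\supp(\tilde{\EE})}\gamma^{-1}(y)$. Over the open base, $\gamma$ is a toric fibration whose general fiber is the complete toric surface $F$ with fan $\Sigma(\tilde{\EE})$; its Poincar\'e polynomial is $P_F(t)=t^4+(\delta(\Sigma(\tilde{\EE}))-2)t^2+1$, and the Seifert torus bundle formalism of Section \ref{sec-prem} supplies the required trivializations after a finite \'etale cover of $Y$. Over each point $y\in\supp(\tilde{\EE})$, the fiber is a complete $3$-dimensional toric variety $F_y$ with fan $\tilde{\EE}_y$, of Poincar\'e polynomial $P_{F_y}(t)=t^6+(\delta(\tilde{\EE}_y)-3)t^4+(\delta(\tilde{\EE}_y)-3)t^2+1$. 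Assembling through the Leray spectral sequence for $\gamma$ (or a Mayer--Vietoris argument comparing neighborhoods of generic and degenerate fibers), I obtain $P_{\tilde{X}}(t)=(P_Y(t)-r(1+t^2))P_F(t)+\sum_{y\in\supp(\tilde{\EE})}P_{F_y}(t)$. Since $P_Y(t)-r(1+t^2)=(1-r)(1+t^2)+2\rho_g(Y)t$, this matches the first two summands of the announced formula.

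I then relate $P_X(t)$ to $P_{\tilde{X}}(t)$ via the decomposition theorem. By Lemma \ref{l-semi}, $\pi$ is semi-small; by Theorem \ref{t-main}(ii) and the inversion formula of Theorem \ref{t-hvector}, the only strata contributing to the decomposition beyond the open one are the codimension-two torus orbits in $E$, each with multiplicity one. (Codimension-three orbits, being isolated fixed points, would require fibers of half-integer dimension in the semi-small regime and therefore do not appear; when $\tilde{X}$ is rationally smooth this is exactly Theorem \ref{t-main}(iii).) Each closure $\bar{O}\simeq\P^1$ contributes $(1+t^2)t^2$ to $P_{\tilde{X}}(t)-P_X(t)$, accounting for the perverse shift from a $1$-dimensional stratum into the $3$-dimensional ambient variety. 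Thus $P_X(t)=P_{\tilde{X}}(t)-|\Orb_2(E)|(1+t^2)t^2$, and substituting the expression above for $P_{\tilde{X}}(t)$ produces the stated identity.

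The principal obstacle is the computation of $P_{\tilde{X}}(t)$: the monodromy of $\gamma$ over $Y$ must be handled via the Seifert bundle description, and the excess cohomology at each degenerate fiber must be quantified via the combinatorial comparison of the fans $\Sigma(\tilde{\EE})$ and $\tilde{\EE}_y$, producing the correction $\delta(\tilde{\EE}_y)-\delta(\Sigma(\tilde{\EE}))-2$ implicit in the final formula.
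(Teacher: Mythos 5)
Your overall architecture matches the paper's: compute $P_{\tilde{X}}(t)$ from the contraction-free formula (Theorem \ref{t-contfree}), then subtract the contributions of the summands in the decomposition theorem for $\pi$. The first half is fine. The gap is in your treatment of the codimension-three orbits (the isolated fixed points in $E$).

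You dismiss them by arguing that non-relevant strata cannot carry summands for a semi-small map. That conclusion is only available when $\tilde{X}$ is rationally smooth, which is exactly the hypothesis of Theorem \ref{t-main}(iii) and which fails for a general toroidal threefold $\tilde{X}$. Without rational smoothness, $IC_{\tilde{X}}\neq\QQ_{\tilde{X}}[3]$, the stalks of $IC_{\tilde{X}}$ live in a range of degrees, and semi-smallness of $\pi$ no longer forces $\pi_{\star}IC_{\tilde{X}}$ to be a perverse sheaf supported only on relevant strata; a priori the fixed points of $E$ can support shifted summands $IC_{\{pt\}}[-b]$ with $b\neq 0$. This is precisely why the paper develops the inductive procedure of Theorem \ref{t-hvector}. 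In Section \ref{subsec-dim3} it computes, for a $0$-dimensional orbit $O_{2}$, the generating function $S_{O_{2}}(t)=q_{O_{2}}(t)-\eta_{O_{2}}t^{-1}$ with $q_{O_{2}}(t)=t^{-3}\bigl(P_{\tilde{X}(\bar{\D}_{O_{2}})}(t)-P_{X(\bar{\D}_{O_{2}})}(t)\bigr)$ (using Theorem \ref{g-vectorth} and Example \ref{e-aff3fold}), and this is \emph{not} identically zero orbit by orbit. What saves the final formula is that each $q_{O}(t)t^{3}-\eta_{O}t^{2}$ has degree at most $2$, while Poincar\'e duality for $IH^{\star}(X;\QQ)$ forces the total codimension-three contribution to be palindromic with respect to degree $6$; a palindromic polynomial of degree at most $2$ vanishes, so the sum over $\mathcal{O}_{3}(E)$ drops out. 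You need this argument (or an equivalent explicit computation of the $S_{O}$ for fixed points); the appeal to semi-smallness alone does not close the step. The multiplicity-one claim for the codimension-two orbits also requires the short computation of Case 1 of Section \ref{subsec-dim3} (via Theorem \ref{g-vectorth}, Example \ref{e-elliptic} and Lemma \ref{l-X12}) rather than a bare citation, but that part of your sketch is at least correct in substance.
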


Our approach in higher dimension does not give a direct formula of the intersection cohomology Betti numbers but provides an algorithm. This was somehow philosophically expected.  For instance, in the toric setting (see  \cite{DL91, Fie91}), the intersection cohomology Betti numbers are usually obtained in terms of a double induction, which compares
the global and local intersection cohomologies of toric varieties of smaller dimension. 
\\

{\em Related works.} Independently,  Laface, Liendo, and Moraga investigated in \cite{LLM17} the Hodge-Deligne polynomial, the cohomology ring, and the Chow ring of a smooth complete complexity-one $\TT$-variety. In relation with mirror symmetry, one may also look at the \emph{stringy invariants} (see \cite{BD96}) as studied
by Batyrev and Moreau in \cite{BM13}. Such a description (see \cite{Lan17,LPR19}) is available for complexity-one horospherical varieties \cite{LT16, LT17}. Finally, we want to mention that the starting point of the present work comes from the relative version of the $h$-invariant theory
for toric varieties, see \cite{KS16, CMM15, Cat15}.
\\

\begin{remark} 
In this paper, we work over the complex numbers and with the Euclidean topology. In particular, \emph{local systems} are locally constant sheaves of $\QQ$-vector spaces with respect to the Euclidean topology and whose fibers are finite-dimensional. 
\end{remark}

\section{Preliminaries}\label{sec-prem}
\subsection{Preliminaries on $\TT$-varieties}\label{sec-r-tvar}
This section is a reminder on the Altmann-Hausen's theory for the classification of torus actions on normal algebraic varieties  \cite{AH06, AHS08, AIPSV12, Lan15}. Here we focus on the complexity-one torus actions and we freely use the approach of Timashev in \cite{Tim08}. We 
also refer the reader to \cite{Tim97, LT16, LT17, Lan16} for generalizations to other reductive group actions. 
\\

We start by introducing the usual notation from toric geometry. Let $\TT = (\CC^{\star})^{n}$ be an algebraic torus with character lattice $M$ and one-parameter subgroup lattice $N$. Throughout this article, we will consider the duality between the lattices $M$ and $N$, and its extension
$$M_{\QQ}\times N_{\QQ}\rightarrow \QQ, \,\, (m, v)\mapsto \langle m, v \rangle,$$
to the $\QQ$-vector spaces $M_{\QQ} := \QQ\otimes_{\ZZ} M$ and $N_{\QQ} := \QQ\otimes_{\ZZ} N$. Recall that a polyhedral cone $\sigma\subseteq N_{\QQ}$ is said to be \emph{strictly convex} if $\{0\}\subseteq \sigma$ is  a $0$-dimensional face. This last condition exactly means that 
the \emph{dual cone} 
$$\sigma^{\vee}= \{ m\in M_{\QQ}\, |\, \forall v\in \sigma, \, \langle m, v \rangle \geq 0\}$$
is full-dimensional.

A finite set $\Sigma$ of strictly convex polyhedral cones  of $N_{\QQ}$ is a \emph{fan} if it is stable by taking the faces of its elements and if the intersection of any two elements  is a common face of both. A variety $X$ with a $\TT$-action is \emph{toric} (for the torus $\TT$) if $X$ is normal and if the torus $\TT$ faithfully acts on $X$ with an open dense orbit. Actually, the notion of fans mirrors the notion of toric varieties. Indeed,  any face $\tau$ of $\sigma\in \Sigma$ naturally defines an open immersion $X_{\sigma}\rightarrow X_{\tau}$, where  $X_{\sigma} = \spec\, \CC[\sigma^{\vee}\cap M]$  stands for the spectrum of the semigroup algebra of $\sigma^{\vee}\cap M$. From this, one may attach to any fan $\Sigma$ 
a toric variety  $X_{\Sigma}$ (and vice versa) by gluing all the affine pieces $X_{\sigma}$
from the elements of $\Sigma$ (see \cite[Chapter 3]{CLS11} for more details).
\\

We now pass to the notion of polyhedral divisors, which are the combinatorial objects that classify, in the affine case, the complexity-one torus actions. Let us fix a strictly convex polyhedral cone $\sigma\subseteq N_{\QQ}$. 
A \emph{$\sigma$-polyhedron} of $N_{\QQ}$ is a subset of $N_{\QQ}$ obtained as the Minkowski sum $Q+\sigma$ of a \emph{polytope} $Q\subseteq N_{\QQ}$ (i.e., the convex hull of a non-empty finite set of $N_{\QQ}$) and the cone $\sigma$. For a smooth complex algebraic curve $Y$, a \emph{$\sigma$-polyhedral divisor} $\D$ over $Y$ will be a formal sum
$$ \D  =  \sum_{y\in Y} \D_{y}\cdot [y], $$
where each $\D_{y}$ is a $\sigma$-polyhedron of $N_{\QQ}$ and $\D_{y}=\sigma$ for all but finitely many $y\in Y$. For every $m\in \sigma^{\vee}$ we define the $\QQ$-divisor $\D(m)$ called the \emph{evaluation} of $\D$ at $m$ by setting 
$$\D(m) = \sum_{y\in Y} \min_{v\in \D_{y}}\, \langle m , v\rangle \cdot [y].$$
The curve $Y$ (respectively, the cone $\sigma$) is called the \emph{locus} (respectively, the \emph{tail}) of $\D$. The $\sigma$-polyhedral divisor $\D$ is said to be \emph{proper} (or \emph{positive}) if $Y$ is affine, or $Y$ is projective and the two the following properties are satisfied.
\begin{itemize}
\item[(i)] First, the \emph{degree} $\deg(\D) = \sum_{y\in Y} \D_{y}$ must be strictly contained in $\sigma$.
\item[(ii)] Secondly, for every $m\in \sigma^{\vee}$ such that $\min_{v\in \deg(\D)}\langle m, v \rangle = 0$, the 
divisor $\D(dm)$ has to be principal for some $d\in \ZZ_{>0}$.
\end{itemize}
In the sequel, we write $\PPDiv(Y, \sigma)$ (or $\PPDiv(Y, N, \sigma)$ if one needs to emphasize on the lattice $N$)  for the set of proper $\sigma$-polyhedral divisors on $Y$.
 If $Y$ is affine,
then we make the convention that the degree $\deg(\D)$ is empty. 
\\

Note that the functor $X\mapsto \CC[X] =  \Gamma(X, \mathcal{O}_{X})$
induces an equivalence  between the category of affine varieties with $\TT$-action (and with $\TT$-equivariant morphisms), and the opposite category of integral finite type $\CC$-algebras with an $M$-grading (together with homogeneous $\CC$-algebra morphisms). 
The following theorem, which is \cite[Theorems 3.1, 3.4]{AH06} applied to complexity-one torus actions, makes more precise this correspondence. 
\begin{theorem}
\begin{itemize}
\item[(i)] Let $\sigma\subseteq N_{\QQ}$ be a strictly convex polyhedral cone and let $Y$ be a smooth complex algebraic curve. If $\D\in\PPDiv(Y, \sigma)$, then the $M$-graded subalgebra
$$A(Y, \D) := \bigoplus_{m\in \sigma^{\vee}\cap M} H^{0}(Y, \mathcal{O}_{Y}(\D(m)))\otimes \chi^{m}\subseteq \CC(Y)\otimes_{\CC}\CC[\TT],$$
where $\chi^{m}$ is the Laurent monomial corresponding to $m\in M$, defines a normal affine $\TT$-variety $X(\D) = X(Y, \D):= \spec\, A(Y, \D)$ of complexity one with rational quotient $Y$.
\item[(ii)] Conversely, if $X$ is a normal affine variety with a faithful $\TT$-action of complexity one, then there exists a strictly convex polyhedral cone $\sigma\subseteq N_{\QQ}$, a  smooth complex algebraic curve $Y$ and $\D\in \PPDiv(Y, \sigma)$ such that the $\TT$-variety $X(\D)$ is equivariantly isomorphic to $X$. 
\end{itemize}
\end{theorem}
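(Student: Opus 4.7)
The plan is to handle the two directions separately, with part (i) being a direct construction and part (ii) a reconstruction of the polyhedral divisor from the $M$-graded coordinate ring.

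For part (i), I would first check that $A(Y,\D)$ is an integral domain: by construction it is a graded $M$-subalgebra of $\CC(Y)\otimes_\CC\CC[M]$, which is itself a domain. The $M$-grading corresponds to the algebraic $\TT$-action, and faithfulness will follow once I know the weight cone is $\sigma^\vee$ which is full-dimensional. For finite generation I would split cases according to whether $Y$ is affine or projective. In the affine case, finite generation reduces to a statement about the graded ring of global sections of an evaluation filtration on $\CC[Y]$; here the concavity of the map $m\mapsto \D(m)$ implies that $A(Y,\D)$ is a subring of a finitely generated one and a straightforward argument on Hilbert bases of $\sigma^\vee\cap M$ yields generation. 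In the projective case, the positivity conditions (i)-(ii) in the definition of properness are exactly what is needed: condition (i) guarantees that for $m$ in the relative interior of $\sigma^\vee$ the divisor $\D(m)$ has positive degree, and condition (ii) controls the facets of $\sigma^\vee$ where the degree vanishes, forcing the existence of a multiple $\D(dm)$ which is principal; combining these one gets a finitely generated subalgebra. Once finite generation is established, normality of $A(Y,\D)$ follows from the concavity of $m\mapsto \D(m)$: intersections and kernels of valuations corresponding to points of $Y$ and to rays of $\sigma$ realize $A(Y,\D)$ as an intersection of DVRs inside $\CC(Y)(M)$, hence it is integrally closed. Finally, the dimension count $\dim X(\D)=\rank\, M+1$ and the identification $\CC(X(\D))^{\TT}\simeq \CC(Y)$ give complexity one with $Y$ as the rational quotient.

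For part (ii), I would start from a normal affine $\TT$-variety $X$ of complexity one and write $A=\CC[X]=\bigoplus_{m\in M} A_m$. Each $A_m$ is a $\CC(X)^{\TT}$-subspace of $\CC(X)$ via the identification $\CC(X)\simeq\CC(X)^{\TT}\otimes\CC[\TT]$ after choosing a rational section; complexity one forces $\CC(X)^{\TT}\simeq\CC(Y)$ for a unique smooth projective curve $Y$ (the smooth model of the invariant function field). Writing $A_m = L_m\otimes\chi^m$ with $L_m\subseteq \CC(Y)$, the grading axiom $A_m\cdot A_{m'}\subseteq A_{m+m'}$ translates to $L_m\cdot L_{m'}\subseteq L_{m+m'}$. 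Define the weight monoid $\omega_M=\{m\in M : L_m\neq 0\}$; since $A$ is finitely generated and $X$ is normal, $\omega_M$ is the intersection of a full-dimensional polyhedral cone $\omega=\sigma^\vee\subseteq M_\QQ$ with $M$. For each $m\in\omega_M$ the nonzero finite-dimensional $\CC$-vector space $L_m$ determines a divisor $D_m$ on $Y$ via $L_m=H^0(Y,\mathcal{O}_Y(D_m))$ (where $Y$ is replaced by a suitable affine open if necessary), and multiplicativity of $L_\bullet$ together with normality forces the map $m\mapsto D_m$ to be concave and piecewise linear. By Legendre duality one then defines the $\sigma$-polyhedra $\D_y$ at each point $y\in Y$ by
\[
\D_y=\{v\in N_\QQ : \langle m,v\rangle\geq \ord_y(D_m)\text{ for all }m\in\sigma^\vee\},
\]
and the polyhedral divisor $\D=\sum_y \D_y\cdot[y]$ tautologically satisfies $\D(m)=D_m$. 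Properness of $\D$ is then equivalent to the finite generation of $A$, via the two conditions (i) and (ii) on $\deg(\D)$ being forced by the existence of enough global sections.

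The main obstacle will be the Legendre duality step in part (ii): I must show that the map $m\mapsto D_m$ is not merely concave but in fact arises by evaluation from a polyhedral divisor, and that $\D_y=\sigma$ for all but finitely many $y$. The first point requires checking that on each polyhedral subdivision of $\sigma^\vee$ on which $m\mapsto D_m$ is linear, the slopes at a fixed $y\in Y$ assemble into a single polyhedron in $N_\QQ$; this is a standard but delicate piece of convex geometry, essentially dual to the fact that a finitely generated graded ring has a polytopal Newton-Okounkov body. The second point follows because $A$ is finitely generated, so only finitely many divisors $D_{m_i}$ among a finite generating set can have nontrivial vertical part, and all other $\D_y$ collapse to the tail cone. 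Once these convex-geometric facts are in place, the equivalence of categories between $\PPDiv(Y,\sigma)$ and affine $\TT$-varieties of complexity one with tail $\sigma$ and rational quotient $Y$ is formal.
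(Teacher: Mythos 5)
The first thing to note is that the paper does not prove this statement at all: it is quoted as the Altmann--Hausen classification (\cite[Theorems 3.1, 3.4]{AH06}) specialized to complexity-one actions, so there is no internal proof to compare against. Measured against the Altmann--Hausen argument, your sketch follows the same route in outline: for (i) the section algebra of the evaluations $m\mapsto \D(m)$, with finite generation from the properness conditions and normality from a valuative description; for (ii) reconstruction of the divisorial data from the graded pieces $A_m = L_m\otimes\chi^m$ followed by Legendre duality.

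There is, however, one substantive gap at the start of your part (ii). You take $Y$ to be the smooth projective model of $\CC(X)^{\TT}$ and assert that each $L_m$ is a nonzero \emph{finite-dimensional} subspace of $\CC(Y)$ determined by $L_m = H^{0}(Y, \mathcal{O}_{Y}(D_m))$. Both claims fail in general: for $X = \TT\times\CC^{\star}$ one has $L_m = \CC[t, t^{-1}]$ for every $m$ in the weight cone, which is infinite-dimensional and is not the complete linear series of any divisor on $\P^{1}_{\CC}$. This is exactly why the theorem allows $Y$ to be an affine curve: the proof must first decide the locus, i.e., determine whether $\D$ lives on the projective model or on a proper affine open subset of it (for instance from the invariant subring $A_0=L_0$, or from degree considerations for the reconstructed evaluations), and only then define the divisors $D_m$; in the affine case the spaces of global sections are infinite-dimensional and the properness conditions are vacuous. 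Your parenthetical ``where $Y$ is replaced by a suitable affine open if necessary'' names the fix but supplies no criterion. Your closing claim that ``properness of $\D$ is equivalent to finite generation of $A$'' inherits the problem: in the projective case properness additionally encodes that every $m\in\sigma^{\vee}\cap M$ actually occurs as a weight --- condition (ii) of properness is what forces $L_{dm}\neq 0$ on the faces where $\min_{v\in\deg(\D)}\langle m, v\rangle = 0$, via the fact that a degree-zero divisor class on a projective curve has a nonzero section if and only if it is principal --- and is not merely a finite-generation statement. Relatedly, the identity $L_m = H^{0}(Y, \mathcal{O}_{Y}(D_m))$ (completeness of each linear series) is precisely where normality enters through a valuation-theoretic argument; your sketch asserts it rather than proving it, and this is the heart of \cite[Theorem 3.4]{AH06}. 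Apart from these points, the convex-geometric steps (concavity of $m\mapsto D_m$, piecewise linearity from finite generation, assembly of the slopes at each $y$ into $\sigma$-polyhedra $\D_y$, and $\D_y=\sigma$ for all but finitely many $y$) are the standard ones and your treatment of them is correct in outline.
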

Divisorial fans generalize polyhedral divisors in the same way as in toric geometry, when one passes from the concept of strictly convex cones to the concept of fans.
\begin{definition}
Let $Y$ be a smooth complex curve. A \emph{divisorial fan} on $(Y, N)$  is a finite set 
$$\EE =  \{ \D^{i}\, |\, i\in I\}$$
with $\D^{i}\in \PPDiv(Y_{i}, \sigma_{i})$, where $Y_{i}\subseteq Y$ is a Zariski dense open subset and $\sigma_{i}\subseteq N_{\QQ}$ is a strictly convex polyhedral cone, subject to the following conditions. 
\begin{itemize}
\item[(i)] \emph{(stability by intersection)} We have 
$$\D^{i}\cap \D^{j} := \sum_{y\in  Y_{ij}} (\D_{y}^{i} \cap \D_{y}^{j})\cdot [y]\in \EE$$
 for all $i, j\in I$, where
$$ Y_{ij}  := \{ y\in Y_{i}\cap Y_{j}\,|\, \D_{y}^{i}\cap \D_{y}^{j}\neq  \emptyset\}\subseteq Y.$$
\item[(ii)]\emph{(face relations)} For all $i, j\in I$ and for any $y \in Y_{ij}$ the polyhedron $\D_{y}^{i} \cap \D_{y}^{j}$ is a common
face of $\D_{y}^{i}$ and $\D_{y}^{j}$. 
\item[(iii)]\emph{(degree relations)} We have $\deg(\D^{i} \cap \D^{j})=  \deg(\D^{j})\cap \sigma_{i}\cap\sigma_{j}$
for all $i,j\in I$ (note that $\deg(\D^{i}) =  \emptyset$ whenever the locus of $\D^{i}$ is affine).  
\item[(iv)] We will also ask that $Y =\bigcup_{i\in I}Y_{i}$.
\end{itemize}
\end{definition}

The definition of a divisorial fan implies the existence  of transition maps
$$X(\D^{i})\leftarrow X(\D^{i}\cap \D^{j})\rightarrow X(\D^{j})$$
that are open immersions. These maps allow to glue together the open sets $X(\D^{i})$ into a normal $\TT$-variety $X(\EE)$ of complexity one (see \cite[Sections 3,4]{AHS08}).
Note that inside $X(\EE)$ the subset $X(\D^{i}\cap \D^{j})$ identifies with the intersection $X(\D^{i})\cap X(\D^{j})$
 for all $i, j\in I$. Conversely, every normal algebraic variety with torus action of complexity one arises from a divisorial fan (see \cite[Theorem 5.6]{AHS08}).
\\

Any $\sigma$-polyhedral divisor $\D$ over $Y$ yields a sheaf of $\mathcal{O}_{Y}$-algebras
$$\mathcal{A} =  \mathcal{A}(\D)  =  \bigoplus_{m\in \sigma^{\vee}\cap M} \mathcal{O}_{Y}(\D(m)).$$
Now given a divisorial fan $\EE$ describing the $\TT$-variety $X=  X(\EE)$, the \emph{contraction map} $\pi: \tilde{X}\rightarrow X$ is defined by
gluing the natural morphisms  $$\spec_{Y_{i}}\mathcal{A}(\D^{i})\rightarrow \spec\, \Gamma(Y_{i}, \mathcal{A}(\D^{i})) \text{ for all }\D^{i}\in \EE.$$  It follows that the 
total space $\tilde{X}$ is identified with $X(\tilde{\EE})$, where $\tilde{\EE}$ is the divisorial fan 
$$\tilde{\EE} := \left\{\D_{|U_{j}}:= \sum_{y\in U_{j}}\D_{y}\cdot [y]\, |\, \D\in \EE\text{ and } j\in J\right\}$$
and $(U_{j})_{j\in J}$ is any finite Zariski open affine covering of $Y$. Moreover,  by \cite[Section 3, Lemma 1]{Vol10}, one sees that the pair $(\tilde{X}, \pi)$ (up to composition with an equivariant isomorphism) depends only on the $\TT$-variety $X(\EE)$. We will say that
the $\TT$-variety $X(\EE)$ is \emph{contraction-free} if the rational quotient map $X(\EE)\dashrightarrow Y$ is globally defined, or equivalently, if the contraction map $\pi$ is an isomorphism.  
\begin{remark}
 One can recover some geometric properties of the normal $\TT$-variety $X(\EE)$ in terms of the combinatorics of its divisorial fan $\EE$. For instance,  the $\TT$-variety $X(\EE)$ is complete (respectively, contraction-free) if and only if $Y$ is complete and
for any $y\in Y$ the sequence of polyhedral coefficients of all $\D^{i}\in \EE$ at $y$ covers $N_{\QQ}$ \cite[Section 7]{AHS08} (respectively, the locus of each element of $\EE$ is affine).
\end{remark}
We finally discuss on some combinatorial objects that come from the description of Timashev for torus actions of complexity one (see \cite{Tim08}).
\\

Let $\EE$ be a divisorial fan over $(Y, N)$ and assume that $Y$ is a smooth complex projective curve.
 Let $\mathscr{N}$ be the quotient set $$Y \times N_{\QQ}\times \QQ_{\geq 0}/\sim,$$
 where the equivalence relation $\sim$ is given by $$(y, a, b)\sim (y', a', b')\text{ if and only if }(y = y', a = a', b =b')\text{ or }(a = a', b=b' =0).$$ The set $\mathscr{N}$ is 
usually called the \emph{hyperspace} (see \cite[Section 16.2]{Tim11}).   Note that the natural map $$N_{\QQ}\rightarrow \mathscr{N}, \,\, a\mapsto (y, a, 0)/\sim$$ allows to identify $N_{\QQ}$ with a subset of $\mathscr{N}$. Inside $\mathscr{N}$, one considers the
\emph{hypercone} 
$$C(\D) : = \left(\,\,\bigcup_{y\in Y}\{y\}\times C_{y}(\D)\,\,\right)/\sim$$
for any $\sigma$-polyhedral divisor $\D\in\EE$. Here the subset $C_{y}(\D)\subseteq N_{\QQ}\times \QQ$ is the \emph{Cayley cone} of $\D$ at the point $y$, that is, the cone generated by $(\sigma\times \{0\})\cup (\D_{y}\times \{1\})$.
\\

 For each $\D\in \EE$ defined over the curve $Y$, a \emph{hyperface} of $C(\D)$ is a subset of the form $C(\D')\subseteq C(\D)$
for $\D'$ a polyhedral divisor over $Y$ and such that $C_{y}(\D')$ is 
a face of $C_{y}(\D)$ for any $y\in Y$.  We set 
$$HF(\EE):= \{ C\text{ hyperfaces of } C(\D)\, |\, \D\in \EE\text{ and } C\cap \deg(\D)\neq \emptyset\}$$
and endow it  with a  structure of poset by considering the hyperface relations. Observe that
$HF(\EE)$ is isomorphic to the concrete poset
$$\{\tau\text{ face of }\sigma\, |\, \sigma \text{ tail of } \D, \, \D\in \EE \text{ and } \tau\cap \deg(\D)\neq \emptyset\}$$
together with the face relations.
\\

The poset  $HF(\EE)$ has the following geometric meaning. Consider the contraction map $$\pi: \tilde{X} =  X(\tilde{\EE})\rightarrow X =  X(\EE).$$ Denote by $E$ the image of the exceptional locus of the map $\pi$ and let  $\mathcal{O}(E)$ be the set of $\TT$-orbits of $E$. Then the relation $\prec$, where  two orbits $O_{1}, O_{2}$ satisfy $O_{1}\prec O_{2}$ if and only if $O_{2}\subseteq \bar{O}_{1}$, makes $\mathcal{O}(E)$ a poset which is isomorphic to $HF(\EE)$. This follows from the results in \cite[Section 16.4]{Tim11}.
The correspondence comes by seeing that an element $(y, a,b)/\sim$ of $\mathscr{N}$ naturally defines a $\TT$-invariant discrete valuation ${\rm val}_{y, a, b}$ on the function field $\CC(X)$ via the relation 
$${\rm val}_{y, a, b}(f\otimes \chi^{m}) = \langle m , a \rangle + b \, {\rm ord}_{y}(f),$$
where $f\in \CC(Y)^{\star}$ and  $m\in M$. Here the symbol ${\rm ord}_{y}(f)$ stands for the vanishing order of $f$ at the point $y$. 
Actually, to any $C\in H(\EE)$ we pick any valuation $\nu \in C$ living in its relative interior (i.e., in the complement of the union of the proper hyperfaces of $C$). The center in $X$ of the valuation $\nu$
is exactly an orbit closure $\bar{O}_{C}\subseteq E$ and does not depend on the choice of $\nu$. The correspondence is therefore given by $C\mapsto O_C$. 

\subsection{Preliminaries on intersection cohomology}\label{sec-ic-conv}
In this section, we fix the notation on the intersection cohomology theory that we will use throughout the paper.
Let $X$ be a complex algebraic variety and denote by $D^{b}_c(X)$ the constructible derived category of sheaves of $\QQ$-vector spaces on $X$. This is a triangulated category with shift functor $[1]$. For any object $\mathcal{F}\in D^{b}_c(X)$  we write $\mathcal{H}^{j}(\mathcal{F})$ for its cohomology sheaves. Given a morphism of
complex algebraic varieties $f: X\rightarrow Y$, we consider the functors 
$$f_{\star}: D^{b}_c(X)\rightarrow D^{b}_c(Y),\, f_{!}: D^{b}_c(X)\rightarrow D^{b}_c(Y), \text{ and }$$
$$f^{\star}: D^{b}_c(Y)\rightarrow D^{b}_c(X),\, f^{!}: D^{b}_c(Y)\rightarrow D^{b}_c(X).$$
Note that we consider only, in this paper, derived functors  and write $f_{\star}$ instead of $Rf_{\star}$, etc. We will also denote by
 $\mathbb{D}:D^{b}_c(X)\rightarrow D^{b}_c(X)$ the Verdier duality. 
\\

We start with a  Whitney stratification 
$$X = \bigsqcup_{\lambda \in I} X_{\lambda}.$$
Let $X_{\lambda_{0}}$ be the open stratum, let  $i_{\lambda}: X_{\lambda}\rightarrow X$ be the natural inclusion 
and let $\mathscr{L}$ be a local system defined on  $X_{\lambda_{0}}$.  We denote by  $IC_{X}(\mathscr{L})$ the \emph{intersection cohomology complex} with coefficients in $\mathscr{L}$. This constructible complex of sheaves is uniquely determined by the following  conditions. 
\begin{itemize}
\item[(1)] The open stratum condition: $i^{\star}_{\lambda_0} IC_{X}(\mathscr{L})= \mathscr{L}[\dim\, X]$.
\item[(2)] The stalk conditions: $\mathcal{H}^j(i^{\star}_{\lambda} IC_{X}(\mathscr{L}))= 0$ for $j\geq - \dim\, X_{\lambda}$ and $\lambda\neq \lambda_0$.
\item[(3)] The costalk conditions: $\mathcal{H}^j(i^{!}_{\lambda} IC_{X}(\mathscr{L}))= 0$ for $j\leq - \dim\, X_{\lambda}$ and $\lambda\neq \lambda_0$.
\end{itemize}
In our convention,  the intersection cohomology complex $ IC_{X}(\mathscr{L})$ belongs to the category of \emph{perverse sheaves} (for the middle perversity), that is, the heart of the category $D^{b}_c(X)$ with respect to the $\mathfrak{t}$-structure
$$ {}^pD^{\leq 0}(X):= \{ \mathcal{F}\in D^{b}_c(X)\,|\, \dim\, {\rm Supp}(\mathcal{H}^j(\mathcal{F}))\leq -j\text{ for all }j\} \text{ and }$$ 
$$ {}^pD^{\geq 0}(X):= \{ \mathcal{F}\in D^{b}_c(X)\,|\, \dim\, {\rm Supp}(\mathcal{H}^j(\mathbb{D}\mathcal{F}))\leq -j\text{ for all }j\}.$$ 
The \emph{intersection cohomology groups} with coefficients in $\mathscr{L}$ are defined by taking the hypercohomology groups: 
$$ IH^{i}(X; \mathscr{L}) := \mathbb{H}^{i}(X, IC_{X}(\mathscr{L})[-\dim\, X]).$$
In particular, if $\mathscr{L} = \QQ$, then we set $IC_X := IC_{X}(\mathscr{L})$ and 
$IH^{i}(X; \QQ) := \mathbb{H}^{i}(X, IC_{X}[-\dim\, X])$ is the intersection cohomology groups with rational coefficients. Observe that if $X$ is smooth, then $IC_{X} = \QQ[\dim\, X]$ and so $IH^{i}(X; \QQ) =  H^i(X;\QQ)$ is the usual cohomology groups with rational coefficients.
\\

The next result is the \emph{decomposition theorem} due to 
Beilinson, Bernstein, Deligne and Gabber which allows to describe the topology of algebraic proper maps. 
\begin{theorem}\label{t-bbdtheodec}\cite[Theorem 6.25]{BBD82}
Let $f: X\rightarrow Y$  be a proper algebraic morphism between two complex algebraic varieties $X$ and $Y$. Then there exists
a finite family $(Z_{\alpha}, \mathscr{L}_{\alpha}, d_{\alpha})$ where for every index $\alpha$, the subset $Z_{\alpha}\subseteq Y$ is a smooth Zariski locally closed subvariety, the symbol $\mathscr{L}_{\alpha}$ stands for a semi-simple local system on $Z_{\alpha}$ (that is, the representation of the fundamental group $\pi_{1}(Z_{\alpha}, y)$ on $Z_{\alpha}$ corresponding to $\mathscr{L}_{\alpha}$ is semi-simple) and $d_{\alpha}\in\ZZ$ such that we have an isomorphism
$$f_{\star} IC_{X}\simeq \bigoplus_{\alpha} (i_{\alpha})_{\star} IC_{\bar{Z}_{\alpha}}(\mathscr{L}_{\alpha})[-d_{\alpha}]$$
in the derived category $D^{b}_{c}(Y)$, where $\bar{Z}_{\alpha}$ is the Zariski closure of $Z_{\alpha}$ in $Y$ and $i_{\alpha}: \bar{Z}_{\alpha}\rightarrow Y$ is the natural inclusion. 
\end{theorem}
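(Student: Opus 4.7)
\emph{Approach via weights and Gabber's purity theorem.} The plan is to follow the original strategy of Beilinson, Bernstein, Deligne and Gabber, in which the statement over $\CC$ is reduced to an $\ell$-adic statement over a finite field where weight theory is available. One chooses a finitely generated subring $R\subset \CC$ over $\ZZ$ for which $f$, the varieties $X$, $Y$, a Whitney stratification, and all relevant constructible sheaves admit models, and specializes at a closed point of $\mathrm{Spec}(R)$ to obtain a proper morphism $f_{0}:X_{0}\to Y_{0}$ over some finite field $\mathbb{F}_{q}$. By the comparison theorems between the topological constructible derived category over $\CC$ and the $\ell$-adic derived category, any splitting of $(f_{0})_{\star}IC_{X_{0}}$ after extension of scalars to $\overline{\mathbb{F}}_{q}$ lifts back to the desired decomposition of $f_{\star}IC_{X}$ over $\CC$.

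The key $\ell$-adic input is Gabber's purity theorem, which says that $IC_{X_{0}}$ is a pure complex of weight $\dim X$. Since $f_{0}$ is proper, Deligne's stability theorem implies that $(f_{0})_{\star}IC_{X_{0}}$ remains pure on $Y_{0}$. The structure theorem for pure perverse sheaves then yields two things at once. First, one obtains the splitting
$$(f_{0})_{\star}IC_{X_{0}}\simeq \bigoplus_{i\in \ZZ} {}^{p}\mathcal{H}^{i}\bigl((f_{0})_{\star}IC_{X_{0}}\bigr)[-i]$$
in the derived category, which in this framework is equivalent to the relative hard Lefschetz theorem. Second, each perverse cohomology sheaf ${}^{p}\mathcal{H}^{i}((f_{0})_{\star}IC_{X_{0}})$ is geometrically semisimple, hence decomposes as a direct sum of simple perverse sheaves on $Y_{0,\overline{\mathbb{F}}_{q}}$. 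Combining these two decompositions and then invoking the classification of simple perverse sheaves as intermediate extensions $IC_{\bar{Z}_{\alpha}}(\mathscr{L}_{\alpha})$ of simple local systems on smooth locally closed subvarieties $Z_{\alpha}\subseteq Y$ produces precisely the stated decomposition, with the semisimplicity of each $\mathscr{L}_{\alpha}$ inherited from that of the ambient perverse sheaf.

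The main obstacle is Gabber's purity theorem together with the structure theorem for pure perverse sheaves: both are genuinely deep results that require the full $\ell$-adic formalism and cannot be circumvented within this approach. An alternative route I would mention but not pursue in detail is the Hodge-theoretic proof of de Cataldo and Migliorini, which replaces Frobenius weights by polarized Hodge structures and establishes both the relative hard Lefschetz theorem and the semisimplicity of ${}^{p}\mathcal{H}^{i}(f_{\star}IC_{X})$ directly on $X$ via classical Hodge theory, thus avoiding the detour through positive characteristic at the cost of somewhat more elaborate geometric constructions on resolutions.
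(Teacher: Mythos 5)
The paper offers no proof of this statement: it is quoted verbatim as a known result with a citation to \cite{BBD82}, and your outline is precisely the original argument of that reference (reduction to a finite-field model, Gabber's purity, Deligne's weight stability, the structure theorem for pure perverse sheaves, and the classification of simple perverse sheaves as intermediate extensions). Your sketch is accurate and consistent with the source the paper relies on, so there is nothing to compare beyond noting that the de Cataldo--Migliorini Hodge-theoretic alternative you mention is equally standard.
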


 Let $f: X\rightarrow Y$ be a surjective algebraic map
and consider a smooth stratification 
$$Y = \bigsqcup_{\lambda \in J} Y_{\lambda}$$
with connected strata such that the maps $f^{-1}(Y_{\lambda})\rightarrow Y_{\lambda}$ are topologically locally trivial fibrations for any $\lambda\in I$. We say that $f$ is \emph{semi-small} if for any $\lambda\in J$ and any $y\in Y_{\lambda}$ we have $$\dim\, f^{-1}(y)\leq \frac{1}{2}(\dim\, Y - \dim\, Y_{\lambda}).$$
In this case, we say that a stratum $Y_{\lambda}$ is \emph{relevant} if the equality
$$ \dim\, f^{-1}(y) =  \frac{1}{2}(\dim\, Y - \dim\, Y_{\lambda})  \text{ holds for some }y\in Y_{\lambda}.$$ 
The decomposition theorem has a more explicit form in the context of semi-small algebraic maps with rationally smooth total space. 
\begin{theorem}\cite[Proposition 8.2.21, Theorem 8.2.36]{HTT08}\label{theo-semi-decomp}
Let $f: X\rightarrow Y$ be a proper algebraic surjective semi-small morphism between two complex algebraic varieties $X$ and $Y$. Assume 
that $X$ is rationally smooth. Then we have $IC_{X}\simeq \QQ_{X}[\dim\, X]$. Furthermore, there exists
a canonical family $(Z_{\alpha}, \mathscr{L}_{\alpha})$ where for every index $\alpha$,  the subset $Z_{\alpha}\subseteq Y$ is a smooth Zariski locally closed subvariety, and $\mathscr{L}_{\alpha}$ is a semi-simple local system defined on  $Z_{\alpha}$ such that 
$$f_{\star} IC_{X} \simeq \bigoplus_{\alpha}  (i_{\alpha})_{\star} IC_{\bar{Z}_{\alpha}}(\mathscr{L}_{\alpha}).$$
 The data $(Z_{\alpha}, \mathscr{L}_{\alpha})$ satisfies the following conditions. 
\begin{itemize}
\item[(i)] The varieties $Z_{\alpha}$ are the relevant strata of a smooth stratification of $Y$ such that the maps  $$f^{-1}(Z_{\alpha})\rightarrow Z_{\alpha}$$ are topologically locally trivial fibrations. 
\item[(ii)] The local system $\mathscr{L}_{\alpha}$ is the local system associated with 
$$y\mapsto H^{ \dim\, Y - \dim\, Z_{\alpha}}(f^{-1}(y); \QQ).$$
\end{itemize}
\end{theorem}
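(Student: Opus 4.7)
The plan is to combine Theorem \ref{t-main}(iii) with an explicit computation of $P_{\tilde X}(t)$ via the quotient morphism $\gamma: \tilde X \to Y$ and the combinatorics of the divisorial fan $\tilde\EE$.

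\textbf{First step.} I would verify that $\tilde X$ is rationally smooth. Since $\tilde X$ is toroidal, this is a local question on analytic toric models, which in dimension three are at most three-dimensional affine toric varieties whose defining cones are built from a two-dimensional tail cone $\sigma$ (automatically simplicial in $N_\QQ \cong \QQ^2$) and Cayley-type cones arising from the polyhedral coefficients $\D_y$ at $y \in \supp(\tilde\EE)$. After refining $\tilde\EE$ (which leaves $X$ unchanged) we may assume every three-dimensional cone appearing is simplicial, so $\tilde X$ is rationally smooth. Theorem \ref{t-main}(iii) then gives
$$\pi_\star IC_{\tilde X} \simeq IC_X \oplus \bigoplus_{O \in \Orb_2(E)}(\iota_O)_\star IC_{\bar O}.$$
Each $O \in \Orb_2(E)$ is one-dimensional, with $\bar O$ a complete rational curve of Poincar\'e polynomial $1+t^2$. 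Passing to hypercohomology and accounting for the perverse shifts (using the convention $IH^j(X) = \mathbb{H}^{j-3}(X, IC_X)$) yields
$$P_X(t) = P_{\tilde X}(t) - |\Orb_2(E)|\cdot t^2(1+t^2).$$

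\textbf{Second step.} Since $\tilde X$ is contraction-free, $\gamma: \tilde X \to Y$ is a proper surjective morphism onto the smooth projective curve $Y$ of genus $\rho_g(Y)$. Over $Y_0 := Y \setminus \supp(\tilde\EE)$ (the complement of $r = |\supp(\tilde\EE)|$ points), $\gamma$ is a topologically locally trivial fibration with general fiber the complete toric surface $F = X_{\Sigma(\tilde\EE)}$ of Poincar\'e polynomial $P_F(t) = t^4 + (\delta(\Sigma(\tilde\EE))-2)t^2 + 1$. Near each $y \in \supp(\tilde\EE)$, an analytic neighborhood of $\gamma^{-1}(y)$ is modeled on the complete simplicial toric threefold $X_{\tilde\EE_y}$ of Poincar\'e polynomial $P_{\tilde\EE_y}(t) = t^6 + (\delta(\tilde\EE_y)-3)(t^4+t^2)+1$. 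A Mayer--Vietoris analysis, justified by the trivial monodromy of $R^\bullet \gamma_\star \QQ$ over $Y_0$ (the toric cellular basis of $H^\star(F;\QQ)$ is $\pi_1(Y_0)$-invariant), gives
$$P_{\tilde X}(t) = \bigl(P_Y(t) - r(1+t^2)\bigr)\,P_F(t) + \sum_{y\in\supp(\tilde\EE)}P_{\tilde\EE_y}(t),$$
where $P_Y(t) = 1+2\rho_g(Y)t+t^2$ and the subtracted term $r(1+t^2)P_F(t)$ compensates for the $S^1 \times F$-type overlaps between the $r$ local toric models and the fibration over $Y_0$.

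Substituting this into the first step and expanding produces the stated formula. The main obstacle is the additivity assertion in step two: one must rigorously justify both the trivial monodromy of $R^\bullet\gamma_\star \QQ$ on $Y_0$ and that the local model $X_{\tilde\EE_y}$ correctly captures the intersection cohomology contribution of a tubular neighborhood of the fiber over $y$. In effect this amounts to specializing the inversion formula in the incidence algebra of the orbit poset, coming from the general algorithm of Theorem \ref{t-hvector}, to the low-dimensional situation: here the orbit stratification of $\tilde X$ has only a few layers, so that both the Mayer--Vietoris bookkeeping and the inversion collapse to the stated closed form.
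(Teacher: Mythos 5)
Your proposal does not prove the statement it was supposed to prove. The statement is Theorem \ref{theo-semi-decomp}, the decomposition theorem for semi-small maps with rationally smooth total space: a general assertion about an arbitrary proper surjective semi-small morphism $f: X\rightarrow Y$, namely that $IC_{X}\simeq \QQ_{X}[\dim\, X]$, that $f_{\star}IC_{X}$ splits \emph{with no shifts} into intersection complexes supported on the closures of the relevant strata, and that the local systems are those attached to $y\mapsto H^{\dim\, Y - \dim\, Z_{\alpha}}(f^{-1}(y);\QQ)$. In the paper this is a background result quoted verbatim from Hotta--Takeuchi--Tanisaki \cite[Proposition 8.2.21, Theorem 8.2.36]{HTT08} (going back to de Cataldo--Migliorini and BBD), with no proof given or expected. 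What you have written instead is a sketch of a computation of the Poincar\'e polynomial of a complete normal threefold with a $(\CC^{\star})^{2}$-action, i.e., an argument aimed at Theorem \ref{theore-dim3}. None of the three assertions of Theorem \ref{theo-semi-decomp} is addressed anywhere in your text, so there is no proof to compare: the target is simply missed.

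Moreover, your sketch is circular relative to the statement: you invoke Theorem \ref{t-main}(iii), whose proof in the paper consists precisely of applying Theorem \ref{theo-semi-decomp} to the contraction map $\pi$ (together with the semi-smallness of $\pi$ from Lemma \ref{l-semi}, the irreducibility of the fibers from Lemma \ref{l-fiber}, and the identification of the relevant strata in Remark \ref{r-str}); one cannot use \ref{t-main}(iii) in any argument meant to establish \ref{theo-semi-decomp}. Even read purely as an attempt on the threefold formula, your first step fails: refining the divisorial fan $\tilde{\EE}$ does \emph{not} leave the varieties unchanged --- a refinement produces a proper birational modification of $\tilde{X}$ (and alters the contraction map), so you may not assume all Cayley cones are simplicial, and hence may not assume $\tilde{X}$ is rationally smooth, without changing the space whose intersection cohomology you are computing. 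This is exactly why the paper's route to Theorem \ref{theore-dim3} avoids any rational smoothness hypothesis: it uses Theorem \ref{t-main}(ii) and pins down the multiplicities $s_{b,O}$ through the stalk computations of Section \ref{s-three} (Proposition \ref{t-fixedpointt}, Theorem \ref{g-vectorth}) and the incidence-algebra inversion of Theorem \ref{t-hvector}, while the contraction-free input $P_{\tilde{X}}(t)$ comes from Theorem \ref{t-contfree} rather than from a Mayer--Vietoris argument whose monodromy and local-model claims you leave unjustified.
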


\subsection{Seifert torus bundles}
Let $G$ be a connected linear algebraic group and let $H\subseteq G$ be an algebraic subgroup acting on  a variety $Z$. 
The group $H$ acts on the product by $h\cdot (g, z) = (gh^{-1}, h\cdot z)$, where $g\in G$, $h\in H$ and $z\in Z$. If there exists a 
geometric quotient $(G\times Z)/H$, then we set $G\times^{H}Z:= (G\times Z)/H$ and call it the \emph{homogeneous fiber space} over $G/H$ associated with $Z$. Note that the geometric quotient $(G\times Z)/H$ always exists if  $Z$ is quasi-projective (see \cite[Theorem 4.19]{PV89}). In this section, we introduce the concept of Seifert torus bundles that will be used after. They form a  particular class of homogeneous fiber spaces.
\begin{definition}\label{def-Seifertb}
By a \emph{Seifert torus bundle} we mean the data $(X, \TT, \Gamma)$,
where $\TT =  (\CC^{\star})^n$ is an algebraic torus, $\Gamma\subseteq \TT$ is a finite subgroup, and $X$ is a normal complex algebraic variety with an algebraic $\Gamma$-action admiting a covering by $\Gamma$-stable quasi-projective Zariski open subsets. In particular, there exists a geometric quotient $X\rightarrow X/\Gamma$. The \emph{total space} of the  Seifert torus bundle $(X, \TT, \Gamma)$ is the space  $V:= \TT\times^{\Gamma} X$, while the \emph{base} is the quotient $B:= X/\Gamma$. Finally, the \emph{fibration} (that we also call the
Seifert torus bundle) is the natural invariant projection $\varepsilon: V\rightarrow B$.
\end{definition}
In \cite{BM99}, Braden and MacPherson studied the \emph{Seifert line bundles}, which are roughly speaking fibrations that are locally the line bundles up to the action of a finite group. The following lemma justifies our terminology.
\begin{lemma}\label{l-quotquot}
Let $\varepsilon: V  =  \TT\times^{\Gamma} X\rightarrow B$ be a Seifert torus bundle. Consider the action of $\Gamma$ on the product $\TT\times X$ given by $g\cdot (x, y)  =  (g\cdot x, y)$, where $g\in \Gamma$, $x\in \TT$ and $y\in X$. This action naturally induces a $\Gamma$-action on $V$. Then we have the isomorphism $V/\Gamma\simeq \TT/\Gamma\times B$. 
\end{lemma}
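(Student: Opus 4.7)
The plan is to reinterpret $V/\Gamma$ as a quotient of $\TT\times X$ by a combined $\Gamma\times\Gamma$-action, and then, via a group automorphism of $\Gamma\times\Gamma$, decouple this action into a product action on the two factors.

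First I would observe that the defining action $\rho_{1}(h)\cdot(x,y)=(xh^{-1},hy)$ (used to form $V=\TT\times^{\Gamma}X$) commutes with the new action $\rho_{2}(g)\cdot(x,y)=(gx,y)$, since $\TT$ is abelian. These two commuting actions therefore combine into a single action of $\Gamma\times\Gamma$ on $\TT\times X$ given by $(g,h)\cdot(x,y)=(gxh^{-1},hy)$. Using the quasi-projective covering of $X$ guaranteed by Definition~\ref{def-Seifertb} (and the affineness of $\TT$), one assembles a $\Gamma\times\Gamma$-stable quasi-projective covering of $\TT\times X$ of the form $\{\TT\times U_{i}\}$, which ensures that all the relevant geometric quotients exist and that iterating them identifies
$$V/\Gamma\;\simeq\;(\TT\times X)/(\Gamma\times\Gamma).$$

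The key algebraic step is then to introduce the group automorphism $\sigma:\Gamma\times\Gamma\to\Gamma\times\Gamma$ defined by $\sigma(a,b)=(ab,b)$, with inverse $(g,h)\mapsto(gh^{-1},h)$. Pulling the combined action back along $\sigma$, and using that $\TT$ is abelian to simplify $abxb^{-1}=ax$, the action becomes
$$(a,b)\cdot(x,y)\;=\;\sigma(a,b)\cdot(x,y)\;=\;(abxb^{-1},by)\;=\;(ax,by),$$
which is a genuine product action where the first copy of $\Gamma$ acts on $\TT$ by translation and the second copy acts on $X$ by the given action. Since the geometric quotient for such a product action on a product variety factors as the product of the individual geometric quotients, one obtains
$$(\TT\times X)/(\Gamma\times\Gamma)\;\simeq\;(\TT/\Gamma)\times(X/\Gamma)\;=\;(\TT/\Gamma)\times B,$$
which yields the desired isomorphism.

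The conceptual content of the lemma is captured by the observation that $\sigma$ converts the twisted diagonal $(\Gamma\times\Gamma)$-action into a product one; the only real technical hurdle will be justifying that the iterated quotient $V/\Gamma$ genuinely coincides with the simultaneous $\Gamma\times\Gamma$-quotient at the scheme-theoretic level, but this is routine given the $\Gamma\times\Gamma$-stable quasi-projective covering produced in the first paragraph and the standard existence theorem for geometric quotients by finite groups.
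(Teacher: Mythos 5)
Your proof is correct, but it follows a genuinely different route from the one in the paper. The paper's proof is a direct construction: the two projections $V\rightarrow B$ and $V\rightarrow \TT/\Gamma$ assemble into a $\Gamma$-invariant morphism, which descends to $f:V/\Gamma\rightarrow \TT/\Gamma\times B$; one then checks that $f$ is bijective and invokes Zariski's Main Theorem together with the normality of $V/\Gamma$ to conclude that a bijective morphism onto a normal variety is an isomorphism. You instead identify $V/\Gamma$ with the quotient of $\TT\times X$ by the combined $\Gamma\times\Gamma$-action $(g,h)\cdot(x,y)=(gxh^{-1},hy)$ (quotient in stages, using that $\{1\}\times\Gamma$ is normal with quotient group acting as the stated $\Gamma$-action on $V$), and then untwist this action by the shearing automorphism $\sigma(a,b)=(ab,b)$ of $\Gamma\times\Gamma$, which --- since $\TT$ is abelian --- turns it into the product action $(a,b)\cdot(x,y)=(ax,by)$, so the quotient splits as $(\TT/\Gamma)\times(X/\Gamma)$. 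Your approach buys you the isomorphism without any pointwise bijectivity check and without appealing to Zariski's Main Theorem or normality; what it costs is that you must justify two pieces of quotient formalism (quotients in stages for the normal subgroup $\{1\}\times\Gamma$, and compatibility of geometric quotients with products), both of which are standard for finite groups in characteristic zero and are covered by the $\Gamma\times\Gamma$-stable quasi-projective covering $\{\TT\times U_{i}\}$ you construct. Both arguments produce the same natural map $[x,y]\mapsto([x],\varepsilon([x,y]))$, so the resulting identification agrees with the one used later in the paper.
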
 
\begin{proof}
We have two natural maps $\varepsilon: V\rightarrow B$ and $V\rightarrow \TT/\Gamma$ inducing a $\Gamma$-invariant morphism $V\rightarrow \TT/\Gamma\times B$. By the universal property of the quotient, we get a morphism $f: V/\Gamma\rightarrow B\times \TT/\Gamma$. By construction $f$ is bijective. Since $V/\Gamma$ is a normal variety, we conclude by Zariski Main Theorem that $f$ is an isomorphism.
\end{proof}
\section{Stratifications and fibers}\label{s-one}
\subsection{Fibers of the contraction maps}
This section is dedicated to studying the geometry of the contraction map. We will use notations of the introduction. 
 Namely $\pi: \tilde{X}\rightarrow X$ is the contraction map of our normal complexity-one $\TT$-variety $X$, the set $E$ is the image of the exceptional locus of $\pi$ and the morphism
$\gamma: \tilde{X}\rightarrow Y_{0}$ is the surjective global quotient. Note that we assume that the $\TT$-action on $X$ is faithful.  Finally, for any point $y\in Y_{0}$ we  write $F_{y}= \gamma^{-1}(y)_{\rm red}$ for its reduced fiber. 
\\

We begin by observing that
the $\TT$-variety $\tilde{X}$ has a $\TT$-stable open subset of the form $Y'\times V$,
where $V$ is the toric variety corresponding to the general fiber of the quotient map $\gamma$ and $Y'\subseteq Y_{0}$ is a Zariski dense open subset. On this subset, the torus trivially acts on the first factor and by the natural action on the second factor. Elements of the set $Z:= Y_{0}\setminus Y'$ will be called \emph{special}. The following lemma describes the preimage of any orbit $O\subseteq E$ under $\pi$.
\begin{lemma}\label{l-fiber}
Let  $O$ be a $\TT$-orbit in $X$ contained in $E$. 
Denote by $\TT_{x}$ the isotropy group of a point $x\in O$. Write $\TT^{0}_{x}$ for the neutral connected component of $\TT_{x}$ and $\Gamma_{x}$ for the finite group $\TT_{x}/\TT^{0}_{x}$. Set $\TT_{O}:= \TT/\TT^{0}_{x}$. Then the fiber $\pi^{-1}(x)$ is a smooth projective curve and we have the homogeneous fiber space decomposition
$$ \pi^{-1}(O)\simeq \TT_{O} \times^{\Gamma_{x}}\pi^{-1}(x).$$
\end{lemma}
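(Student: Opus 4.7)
The proof decomposes naturally into three steps: identifying the structure of the fiber $\pi^{-1}(x)$, producing a homogeneous fiber space description for the full $\TT_x$-action, and reducing from $\TT_x$ to its component group $\Gamma_x$.

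\medskip

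First, I would work locally: replacing $X$ by a $\TT$-stable affine open neighborhood of $x$, we may assume $X = X(Y, \D)$ with $Y$ projective and $\tilde{X} = \spec_Y \A(\D)$. Via Timashev's correspondence recalled in Section \ref{sec-r-tvar}, the orbit $O$ corresponds to a hyperface $C(\D') \in HF(\EE)$ meeting $\deg(\D)$, and $\TT_x^0$ is the subtorus associated with the linear span of the tail of this hyperface. Positive-dimensionality of $\pi^{-1}(x)$ comes from $x \in E$; combining $\dim \tilde{X} = \dim X$ with the description of $\tilde{X}$ as the normalization of the graph of the rational quotient $X \dashrightarrow Y$ (a curve) forces $\dim \pi^{-1}(x) = 1$. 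Properness of $\pi$ gives projectivity, connectedness follows from $\pi_{\star} \O_{\tilde{X}} = \O_X$ (Zariski), and smoothness is obtained from a local computation exploiting the toroidal structure of $\tilde{X}$ over the smooth curve $Y$.

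\medskip

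Next, I would invoke $\TT$-equivariance of $\pi$ to produce the decomposition. Since $O \simeq \TT / \TT_x$, pulling the restriction $\pi^{-1}(O) \to O$ back along the $\TT_x$-torsor $\TT \to O$ yields a $\TT$-equivariant morphism over $\TT$ whose fiber over the identity is $\pi^{-1}(x)$; by freeness and transitivity of the $\TT$-action on itself, this pullback is a trivial product
\[
\pi^{-1}(O) \times_O \TT \simeq \TT \times \pi^{-1}(x).
\]
Descending by the resulting $\TT_x$-action $h \cdot (t, y) = (t h^{-1}, h \cdot y)$ gives the isomorphism $\pi^{-1}(O) \simeq \TT \times^{\TT_x} \pi^{-1}(x)$.

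\medskip

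The final and most delicate step is to verify that $\TT_x^0$ acts trivially on $\pi^{-1}(x)$, which converts the isomorphism above into the desired $\TT_O \times^{\Gamma_x} \pi^{-1}(x)$. For any $y \in \pi^{-1}(x)$, equivariance of $\pi$ gives $\TT_y \subseteq \TT_x$; what is needed is the reverse containment $\TT_x^0 \subseteq \TT_y$. Using the combinatorial description of Section \ref{sec-r-tvar}, the orbit $\TT \cdot y$ in $\tilde{X}$ corresponds to a hyperface $C(\D'')$ of $\tilde{\EE}$ whose contraction-image is contained in $\bar O$, and the key geometric fact is that the tail of every such $C(\D'')$ must contain the tail of $C(\D')$; since $\TT_x^0$ is precisely the subtorus determined by the linear span of the tail of $C(\D')$, this containment transfers to $\TT_x^0 \subseteq \TT_y^0 \subseteq \TT_y$. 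The main obstacle lies in pinning down this incidence relation between the hyperfaces of $\tilde{\EE}$ and those of $\EE$; once established, the proof concludes by observing that the $\TT_x$-action on $\pi^{-1}(x)$ then factors through the finite group $\Gamma_x$.
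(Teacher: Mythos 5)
Your architecture matches the paper's for the first two steps: the decomposition $\pi^{-1}(O)\simeq \TT\times^{\TT_{x}}\pi^{-1}(x)$ is obtained exactly as in Remark \ref{r-fiberprod} (the paper cites Serre rather than trivializing the pullback by hand, but it is the same argument), and connectedness/projectivity of the fiber come from Zariski's Main Theorem and properness in both treatments. Two caveats on your first step: smoothness of $\pi^{-1}(x)$ is not a ``local computation'' you can wave at --- the paper gets it by first proving that $\pi^{-1}(O)$ is irreducible (it meets the open set $Y'\times V$ and the fibers of $\pi$ are connected) and hence normal, then transporting normality to $\pi^{-1}(x)$ along the \'etale-locally trivial fibration over $O$, so that $\pi^{-1}(x)$ is a normal irreducible curve, hence smooth. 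You should also be aware that an earlier version of this very lemma was wrong (see Remark \ref{rem-mistake}), so the reduced/normal structure of the fiber is precisely where care is needed.

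The genuine gap is your third step, which is the heart of the lemma. You reduce the triviality of the $\TT_{x}^{0}$-action on $\pi^{-1}(x)$ to an ``incidence relation between the hyperfaces of $\tilde{\EE}$ and those of $\EE$'' --- namely that the tail of the hyperface of any orbit $\TT\cdot y\subseteq\pi^{-1}(O)$ contains the tail of the hyperface of $O$ --- and you explicitly do not prove it. As stated, your proposal is therefore conditional on an unestablished combinatorial claim, and that claim is logically equivalent to the conclusion you want (it says exactly that $\dim \TT_{y}=\dim\TT_{x}$ for all $y\in\pi^{-1}(x)$). The paper closes this step without any combinatorics, by a dimension count that you have all the ingredients for: the quotient map $\gamma:\tilde{X}\rightarrow Y_{0}$ is $\TT$-invariant and restricts to a non-constant (in fact dominant) map on the curve $\pi^{-1}(x)$, so the quotient of $\pi^{-1}(x)$ by $\TT_{x}^{0}$ is still one-dimensional; hence the $\TT_{x}^{0}$-orbits in the one-dimensional $\pi^{-1}(x)$ are finite, and a connected group with finite orbits acts trivially. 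I would recommend replacing your deferred incidence relation by this argument; alternatively, if you insist on the combinatorial route, you must actually prove the tail containment (via the valuation-theoretic description of centers at the end of Section \ref{sec-r-tvar}), which is considerably more work than the dimension count.
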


Before proving Lemma \ref{l-fiber}, we make the following remark.

\begin{remark}\label{r-fiberprod}
Let $G$ be a connected linear algebraic group acting on a quasi-projective variety $S$.
If $f: S\rightarrow G/H$ is a $G$-equivariant morphism onto a homogeneous $G$-space, then $S$ is $G$-isomorphic to the homogeneous fiber space $G\times^{H}F$, where $F = f^{-1}(H/H)$ (see \cite{Ser58} or the comment after \cite[Theorem 2.2]{Tim11}). Note that $f$ is a locally trivial  fibration for the \'etale topology.
\end{remark}

\begin{proof}
Consider the $\TT$-equivariant morphism  $\pi^{-1}(O)\rightarrow O = \TT/\TT_{x}$. Then
by Remark \ref{r-fiberprod} we have a  homogeneous fiber space decomposition $ \pi^{-1}(O)\simeq \TT\times^{\TT_{x}}\pi^{-1}(x).$ Moreover,
observe that $\pi^{-1}(O)$ intersects the Zariski open subset $Y'\times V$ and that the fibers of $\pi$ are connected according to Zariski Main Theorem.
It follows that $ \pi^{-1}(O)\cap (Y'\times V)$ is a connected $\TT$-stable  Zariski closed subset of $Y'\times V$. This implies that 
$ \pi^{-1}(O)$ is irreducible.
 Hence the locally closed subvariety $ \pi^{-1}(O)$ must be  normal (compare \cite[Theorem 16.25]{Tim11}). Note that the fibration 
$$\pi_{|\pi^{-1}(O)}: \pi^{-1}(O)\simeq \TT\times^{\TT_{x}}\pi^{-1}(x)\rightarrow \TT/\TT_{x}\simeq O$$ is locally trivial for the \'etale topology.  So  $\pi^{-1}(x)$ is normal and irreducible. Since the orbits of $\pi^{-1}(O)$ are
parameterized by a curve, one concludes that $\pi^{-1}(x)$ is a smooth projective curve. In particular, the group $\TT^{0}_{x}$ trivially acts on $\pi^{-1}(x)$. Indeed,  the quotient of $\pi^{-1}(x)$ by $\TT^{0}_{x}$ is one-dimensional and therefore the group $\TT^{0}_{x}$ acts on  $\pi^{-1}(x)$ by finite automorphisms. Now the connectedness of $\TT^{0}_{x}$ implies that  the  $\TT^{0}_{x}$-action on $\pi^{-1}(x)$ is trivial. This gives the decomposition 
$ \pi^{-1}(O)\simeq \TT_{O} \times^{\Gamma_{x}}\pi^{-1}(x)$ and ends the proof of the lemma.
\end{proof}
\begin{remark}\label{rem-mistake}
In a former version of the present article, we claimed that  $\pi^{-1}(O)$ is equivariantly isomorphic to the trivial product $Y\times O$ whenever $O\subseteq X$ is a $\TT$-orbit of $E$. This is not true in general. Our argument was based on   \cite[Theorem 10.1 (ii)]{AH06} which seems incorrectly stated as pointed out by the referee. One reason is that the isotropy groups on the open subset $Y'\times V$ must be connected (remember that a toric variety with faithful torus action has connected isotropy groups). Now if we have Zariski triviality, then $O$ must have connected isotropy groups as well (since $\pi^{-1}(O)$ intersects  $Y'\times V$). The following example (see \ref{conterex1}) due to the referee shows that $O$ can have disconnected isotropy groups. We thank the referee  for mentioning this inaccuracy. 
\end{remark}
\begin{example}\label{conterex1}
We work with the lattice $N = \ZZ^{2}$ and the strictly convex polyhedral cone $\sigma  =  \QQ_{\geq 0}\times \{0\}$. Consider the polyhedral divisor $\D =  \sum_{y\in \P^{1}_{\CC}}\D_{y}\cdot [y]$ over the projective line with non-trivial coefficients $$\D_{0} =  \left(1, \frac{1}{2}\right) + \sigma\text{ and }  \D_{\infty} =  \left(0, -\frac{1}{2}\right) + \sigma.$$ Let $t\in \CC(\P^{1}_{\CC})$ be a rational function 
satisfying ${\rm div}(t)  = [0] - [\infty]$ and set 
$$ x_1 =  \frac{1}{t}\chi^{(1,0)}, \, x_2 =  \frac{1}{t}\chi^{(1,1)}, \,x_3 =  \frac{1}{t}\chi^{(1,2)}, \,  x_4 =  \frac{1}{t}\chi^{(0,2)}, \, x_5 =  t\chi^{(0,-2)}.$$ 
Then we have $A( \P^{1}_{\CC}, \D) =  \CC[x_1, \ldots, x_5]$ (this can be checked via \cite[Theorem 2.4]{Lan13}) and, in particular, the corresponding $(\CC^{\star})^{2}$-variety $X = X(\D)$ is equivariantly isomorphic to 
$$\{(x_1, \ldots, x_5)\in \CC^5\,|\, x_1 x_3 - x_{2}^2 = x_4 x_5 - 1 = 0\}$$
with torus action given by 
$$(\lambda, \mu)\cdot (x_1, \ldots, x_5) = (\lambda x_1, \lambda \mu x_2, \lambda \mu^2 x_3, \mu^2 x_4, \mu^{-2}x_5)
\text{ for } (\lambda, \mu)\in  (\CC^{\star})^{2}.$$
Note that $t =  \frac{x_3}{x_1 x_4}$ and the rational quotient is given  by  $(x_1, \ldots, x_5)\mapsto (x_3: x_1 x_4)$.
Now the orbit $E = \{ (0,0, 0, x_4, x_5)\in \CC^5\, |,\,  x_4 x_5 - 1 = 0\}$ is the image of the exceptional locus of the contraction map
and has isotropy groups equal to $\CC^{\star}\times \mu_{2}(\CC)$.

From \cite[Section 3, Lemma 1]{Vol10}, one observes that the contraction map $\pi$, in this example, is obtained by blowing-up $E$.
One has $\pi^{-1}(E) \simeq \P_{\CC}^{1}\times \CC^{\star}$ as an abstract algebraic variety and the action on the coordinates is
given by 
$$ (\lambda, \mu)\cdot ([u:v], x_{4}) = ([u:\mu v]: \mu^{2}x_{4}) \text{ for any } (\lambda, \mu)\in (\CC^{\star})^{2}.$$
In this way, we see that the general orbits of $\pi^{-1}(E)$ are mapped onto $E$ via a double covering. 
\end{example}
We will use the following lemma later.
\begin{lemma}\label{l-compl}
The image $E$ of the exceptional locus of the contraction map $\pi:\tilde{X}\rightarrow X$ has finitely many $\TT$-orbits. 
\end{lemma}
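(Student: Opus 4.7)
The plan is to appeal to the combinatorial identification of $\Orb(E)$ with the hyperface poset $HF(\EE)$ set up at the very end of Section \ref{sec-r-tvar}, and then to conclude directly from the finiteness of a divisorial fan. No geometric surgery is required.

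Concretely, I would fix a divisorial fan $\EE$ describing $X$ and recall, following \cite[Section 16.4]{Tim11}, that the assignment $C \mapsto O_C$ (defined by taking the center in $X$ of any $\TT$-invariant valuation lying in the relative interior of $C$) is a well-defined order isomorphism
\[
HF(\EE) \xrightarrow{\ \sim\ } (\Orb(E), \prec).
\]
Next, I would invoke the identification given in the same paragraph of the preliminaries, namely that $HF(\EE)$ is isomorphic to the concrete poset
\[
\{\tau \text{ face of } \sigma \,|\, \sigma \text{ tail of } \D,\ \D \in \EE \text{ and } \tau \cap \deg(\D) \neq \emptyset\}.
\]
A divisorial fan is by definition a \emph{finite} set of proper polyhedral divisors, and each tail cone $\sigma$ is a strictly convex polyhedral cone, which has only finitely many faces. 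Consequently, the displayed set is finite, and so is $\Orb(E)$.

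The main (and essentially only) obstacle is that one must trust the poset correspondence $\Orb(E) \simeq HF(\EE)$; but this is precisely what is stated in the final paragraph of Section \ref{sec-r-tvar}, so there is nothing left to prove. As a sanity check, one could also give a direct chart-by-chart argument: cover $X$ by the finitely many affine pieces $X(\D^i)$ with $\D^i \in \EE$, observe that $E \cap X(\D^i)$ is $\TT$-stable and closed, and check that its $\TT$-orbits correspond to the faces of the tail cone $\sigma_i$ whose relative interiors meet $\deg(\D^i)$. This recovers the same finite list without going through hyperfaces, and could be written out in a line or two if one prefers to keep the proof self-contained.
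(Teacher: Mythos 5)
Your argument is correct, but it is genuinely different from the one in the paper. You obtain finiteness purely combinatorially: you invoke the poset isomorphism $\Orb(E)\simeq HF(\EE)$ stated at the end of Section \ref{sec-r-tvar}, identify $HF(\EE)$ with the concrete poset of faces of the finitely many tail cones that meet the corresponding degrees, and observe that this poset is finite. The paper instead argues geometrically: each irreducible component $E_{1}$ of $E$ is $\TT$-stable, so by \cite[Theorem 5.7]{Tim11} the induced action on $E_{1}$ has complexity $0$ or $1$; if it were $1$, then Lemma \ref{l-fiber} (the fibers of $\pi$ over $E$ are curves) would force $\pi^{-1}(E_{1})$ to have complexity $2$ inside the complexity-one variety $\tilde{X}$, a contradiction; hence each component has complexity $0$ and therefore finitely many orbits. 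The trade-off is that your route is shorter but rests entirely on the full strength of Timashev's orbit--hyperface correspondence (in particular the bijectivity of $C\mapsto O_{C}$ onto $\Orb(E)$), which the paper only quotes as background from \cite[Section 16.4]{Tim11} and does not reprove; the paper's route uses only the elementary fiber-dimension statement of Lemma \ref{l-fiber} together with the general bound on the complexity of stable subvarieties, so it is self-contained relative to the tools actually developed in Section \ref{s-one}. Both are acceptable; if you want to avoid leaning on the unproved-in-this-paper poset isomorphism, the complexity argument is the one to use.
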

\begin{proof}
This follows from the fact that  the torus action is of complexity one (we recall that the complexity of a torus action is the codimension
of the general orbits).
 Indeed, let $E_{1}$ be an irreducible component
of $E$ (which is automatically stable by the torus action). Then
by \cite[Theorem 5.7]{Tim11}, we know that the complexity of the torus action on $E_{1}$ is $0$ or $1$. Let us assume that the complexity is $1$. Using Lemma \ref{l-fiber},
the preimage $E_{2}$ of $E_{1}$ under the contraction map has complexity $2$, a contradiction with \emph{loc. cit.} Thus, the complexity of the torus action on $E_{1}$ is $0$ and the finiteness of the set of orbits of $E$ follows.
\end{proof}

\subsection{Constructing a stratification}
Next, we construct a natural stratification associated with the morphism $\pi:\tilde{X}\rightarrow X$. 
If $n$ is the complex dimension of the torus, then for $0\leq d\leq n+1$ we
write $\CS_{d}(\tilde{X})$ for the set of closed subsets $S$ in $\tilde{X}$ satisfying the 
following conditions. The set $S$ is  a Zariski closure of  a set of the form $Y'\times O$, where $O$ is a $(d-1)$-dimensional orbit of the toric variety $V$ (this class of sets is not considered if $d =0$), or a $d$-dimensional orbit closure contained in a special fiber. 
\begin{lemma}\label{l-stratum}
The sequence $X_{\bullet}$ of closed subsets
$$\emptyset = X_{-1}\subseteq X_{0}\subseteq X_{1}\subseteq \ldots \subseteq X_{n+1} = \tilde{X}, \text{ where }X_{d} = \bigcup_{S\in\CS_{d}(\tilde{X})}S,$$
defines a stratification that makes  $\tilde{X}$ a pseudo-manifold in the sense of \cite[Section 1.1]{GM83}. 
In addition, the image $\pi_{\star}X_{\bullet}$ of the filtration $X_{\bullet}$ by $\pi$ leads to a stratification on $X$, and $\pi$ is a topological fibration with respect to these strata. 
\end{lemma}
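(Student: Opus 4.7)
The plan proceeds in three stages.

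\emph{Stage 1 (closedness and smoothness of strata on $\tilde X$).} I would first argue that each $X_d$ is Zariski closed in $\tilde X$ and that $S_d := X_d \setminus X_{d-1}$ is a smooth complex variety of pure dimension $d$. Closedness reduces to a finiteness argument: since $Y'$ is Zariski dense and open in $Y_0$, the complement $Z = Y_0 \setminus Y'$ is a finite subset of the smooth curve $Y$; the toric variety $V$ has finitely many $\TT$-orbits; and each special fiber contains only finitely many $\TT$-orbit closures, because orbits of the toroidal variety $\tilde X$ above $y\in Y_0$ correspond bijectively to faces of the Cayley cones at $y$ introduced in Section \ref{sec-r-tvar}. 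Hence $\CS_d(\tilde X)$ is a finite collection of $\TT$-stable closed subvarieties. The stratum $S_d$ then decomposes as a disjoint union of open pieces of $Y'\times O$ (smooth of dimension $d$ as a product of the curve $Y'$ by a $(d-1)$-dimensional orbit of $V$) and of $d$-dimensional $\TT$-orbits inside a special fiber (smooth as homogeneous spaces). Disjointness across the two types holds because $Y'\times V$ is disjoint from $\gamma^{-1}(Z)$, and the top stratum $S_{n+1}$ contains the Zariski dense open subset $Y'\times V$.

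\emph{Stage 2 (pseudomanifold property).} All complex codimensions being even, the ``no real codimension one stratum'' condition of \cite[Section 1.1]{GM83} is automatic. For the required distinguished cone neighborhoods along each stratum, I would appeal to the toroidal nature of $\tilde X$: around any point $p\in\tilde X$ with $y = \gamma(p)$, an analytic neighborhood of $p$ is biholomorphic, after a finite group quotient, to the product of a disk in the curve direction with an analytic open subset of the affine toric variety attached to the Cayley cone at $y$. Under such a local model, our filtration $X_\bullet$ matches the standard torus-orbit filtration on the toric factor (possibly tensored by the disk). Since affine toric varieties with their orbit stratification are conical pseudomanifolds, the required local cone structure exists.

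\emph{Stage 3 (descent to $X$ and topological fibration).} On $Y'\times V$, the rational quotient $\iota$ is regular (it is simply the projection to $Y'$), so $\pi(Y'\times V) \subseteq X\setminus E$ and $\pi$ restricts to an isomorphism there; in particular $\pi^{-1}(E)\subseteq \gamma^{-1}(Z)$. Since by Lemma \ref{l-compl} the set $E$ has only finitely many $\TT$-orbits and $\pi$ is proper and $\TT$-equivariant, the images $\pi(X_d)$ are closed $\TT$-stable subsets and $\pi_\star X_\bullet$ is a filtration of $X$. Outside $E$ the induced strata coincide with those on $\tilde X$, and over any orbit $O\subseteq E$, Lemma \ref{l-fiber} identifies $\pi^{-1}(O)$ with the homogeneous fiber space $\TT_O\times^{\Gamma_x}\pi^{-1}(x)$; Remark \ref{r-fiberprod} then ensures that $\pi^{-1}(O)\to O$ is locally trivial in the étale, hence Euclidean, topology with fiber the smooth projective curve $\pi^{-1}(x)$. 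Assembling these pieces yields the desired stratification on $X$ over which $\pi$ is a topological fibration.

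The main obstacle I anticipate is Stage 2: one must precisely trace how the curve direction $Y'$ and the special fiber components combine in the local toroidal model, so as to match the intrinsic filtration $X_\bullet$ with the canonical torus-orbit filtration of the local toric model, rather than merely invoking a generic Whitney-stratification argument. Once this local matching is in hand, Stages 1 and 3 follow from the finiteness statements in Lemma \ref{l-compl}, from the homogeneous-space description in Lemma \ref{l-fiber}, and from general properties of proper equivariant morphisms.
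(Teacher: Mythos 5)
Your proposal follows essentially the same route as the paper: smoothness and finiteness of the strata, local cone neighborhoods from the toroidal structure, and the fibration property on the images via Lemma \ref{l-fiber} and Remark \ref{r-fiberprod}. One correction at the point you flag as the main obstacle: your Stage 2 local model (``disk in the curve direction times an open subset of the toric variety of the Cayley cone at $y$'') is dimensionally off, since the Cayley cone already lives in $N_{\QQ}\times\QQ$ and its toric variety has dimension $n+1=\dim\tilde{X}$, so there is no extra disk factor. The paper resolves this by splitting into two cases: for $y\in Y'$ the neighborhood is a genuine product $V_{y,Y}\times V_{x,F_y}$ with the toric factor being the $n$-dimensional general fiber $V$, whereas for $y$ special one drops the product decomposition entirely and uses the \'etale local model given by the $(n+1)$-dimensional affine toric variety of the Cayley cone, whose orbit stratification directly yields the cone neighborhood $\mathbb{R}^{2d}\times c^{0}(L)$.
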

\begin{proof}
Let us prove that the filtration $X_{\bullet}$ gives rise to a stratification.
It is clear that $X_{n+1}\setminus X_{n}$ is dense in $\tilde{X}$. Furthermore, for every $d$ the subset $X_{d} \setminus X_{d-1}$ is the disjoint union of a finite set of orbits of dimension $d$ and sets of the form $O \times Y'$, where $O$ is an orbit of dimension $d-1$. So $X_{d} \setminus X_{d-1}$ is smooth.

 Assume that $X_{d} \setminus X_{d-1}\neq \emptyset$ for $0\leq d\leq n+1$. Let us fix an $x\in X_{d} \setminus X_{d-1}$. Then there are two possibilities.
\begin{enumerate}
  \item The point $x$ belongs to an orbit of dimension $d-1$, contained in the fiber $F_{y}$ for some $y\in Y'$. Then consider an open neighborhood of $x$ in $\tilde{X}$ as a product $V_x=V_{y,Y} \times V_{x,F_{y}}$, where $V_{y,Y}$ is an open neighborhood of $y$ in $Y$ such that $V_{y,Y}\cap Z=\emptyset$ and $V_{x,F_{y}}$ is an open neighborhood of $x$ in $F_{y}$. Reducing $V_{y,Y}$ and $V_{x,F_{y}}$ if necessary, we may suppose that $V_{y,Y} \simeq \mathbb{R}^{2}$ and $V_{x, F_{y}} \simeq \mathbb{R}^{2(d-1)}\times c^0(L)$ (as pseudo-manifolds), where $c^0(L)$ is the cone of a stratified space $L$ with real dimension $2(n-(d-1))-1$. The last isomorphism is induced from the natural stratification of the toric variety $F_{y}\simeq V$ in which the strata consist of orbits of the same dimension.
  \item The point $x$ belongs to an orbit of dimension $d$, contained in $F_{y}$ for some $y\in Z$. Using that $\tilde{X}$ is toroidal, we may embed $F_{y}$ into a Zariski
  open subset of $\tilde{X}$, which is \'etale to an open set of an affine toric variety. It follows from the toric structure that there is an open neighborhood $W$ of $x$ which is isomorphic to $\mathbb{R}^{2d}\times c^0(L)$, where $L$ is a stratified space $L$ of real dimension $2(n +1 -d)-1$. 
\end{enumerate}
This shows that $X_{\bullet}$ defines a stratification. 

The map $\pi$ is a topological fibration since its restriction on each connected component of $X_{d}\setminus X_{d-1}$
is an \'etale morphism or a fibration with fiber $Y_{0}$ which is locally trivial for the \'etale topology (see Lemma \ref{l-fiber}).
Finally, from this one deduces that $\pi_{\star}X_{\bullet}$ also provides
a stratification, proving our lemma.
\end{proof}
\begin{lemma}\label{l-semi}
The contraction map $\pi:\tilde{X}\rightarrow X$ is semi-small, i.e., the inequality $\dim\, X - \dim \, \pi_{\star}X_{d}\geq 2\,\dim\, \pi^{-1}(x)$ holds for every integer $d$ such that $0\leq d\leq n+1$ and any point $x\in \pi(X_{d})$ lying in the stratum.  
\end{lemma}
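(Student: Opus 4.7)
The plan is to establish semi-smallness by a case split on whether the point $x$ lies in $E$. If $x\notin E$, then $\pi$ is a local isomorphism at $x$, the fiber $\pi^{-1}(x)$ is a single point, and the inequality $\dim X - \dim \pi_{\star}X_d\geq 0 = 2\dim \pi^{-1}(x)$ is automatic from $\dim \pi_{\star}X_d \leq \dim X_d = d \leq n+1 = \dim X$. The substantive case is $x\in E$, for which Lemma \ref{l-fiber} shows that $\pi^{-1}(x)$ is a smooth projective curve, so one must then verify $\dim X - \dim \pi_{\star}X_d\geq 2$.

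In this case, I would let $O$ denote the $\TT$-orbit of $x$, which lies in $E$. Since $\pi$ is $\TT$-equivariant, $\pi(X_{d-1})$ is a $\TT$-stable closed subset of $X$ of dimension at most $d-1$. As $O$ is irreducible and $\TT$-homogeneous of dimension $d':=\dim O$, the intersection $O\cap \pi(X_{d'-1})$ must be empty, so $O$ is contained in the stratum $\pi(X_{d'})\setminus \pi(X_{d'-1})$. Thus the index of the stratum containing $x$ is $d=d'=\dim O$, and $\dim \pi_{\star} X_d\leq d = \dim O$. It therefore suffices to prove the bound $\dim O\leq n-1$. By Lemma \ref{l-fiber}, the preimage $\pi^{-1}(O)\simeq \TT_{O}\times^{\Gamma_x}\pi^{-1}(x)$ is irreducible of dimension $\dim O+1$. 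Since $O\subseteq E \subsetneq X$ and $\pi$ is a proper birational surjection onto the irreducible variety $X$, the closed subset $\pi^{-1}(O)$ must be strictly contained in the irreducible variety $\tilde{X}$ of dimension $n+1$; this gives $\dim \pi^{-1}(O)\leq n$, and hence $\dim O\leq n-1$. Combining everything yields $\dim X - \dim \pi_{\star}X_d\geq (n+1)-(n-1)=2$, as required.

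The only delicate step is the identification of the stratum of $x$ with (the open part of) the orbit $O$, for which one uses the $\TT$-equivariance of $\pi$ together with the dimension estimate $\dim \pi(X_{d-1})\leq d-1$. Once this identification is in place, the crucial bound $\dim O\leq n-1$ follows directly from the birationality of $\pi$ combined with the fibration structure of Lemma \ref{l-fiber}; I do not anticipate any other obstruction.
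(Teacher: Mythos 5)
Your proof is correct and follows essentially the same route as the paper's: both arguments reduce to the case where $\pi^{-1}(x)$ is a curve, identify the stratum component through $x$ with the torus orbit $O\subseteq E$, and then establish $\codim_{X}\,O\geq 2$ using the fibration structure of Lemma \ref{l-fiber}. The only cosmetic difference is that the paper phrases the codimension bound as $\codim_X\, O_x = 1 + \codim_{F_{y}}\, O_y \geq 2$ inside a fiber of $\gamma$, whereas you derive the equivalent bound $\dim\, O\leq n-1$ from the birationality of $\pi$ applied to $\pi^{-1}(O)$; both texts also leave the identification of the stratum through $x$ with $O$ at roughly the same level of detail.
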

\begin{proof}
Let $x\in X$. By Lemma \ref{l-fiber}, the real dimension of $\pi^{-1}(x)$ is either $0$ or $2$. Suppose that the real dimension of $\pi^{-1}(x)$ is $2$ and let $O_x$ be the orbit of $x$. Then $O_x$ is a connected component of a stratum of $\pi_{\star}X_{\bullet}$. Given $y\in Y_{0}$, let $O_y\subseteq \tilde{X}$ be the orbit such that $\gamma(O_y)=\{y\}$ and $\pi(O_y)= O_x$. Hence the complex codimension of $O_x$ is $\codim_X O_x =1 + \codim_{F_{y}} O_y \geq 2$ which proves that $\pi$ is semi-small.
\end{proof}
\begin{remark}\label{r-str}
Recall that a relevant stratum of $\pi_{\star}X_{\bullet}$ is a connected component $S_{d}$ of a stratum $\pi_{\star}X_{d}\setminus \pi_{\star}X_{d-1}$ such that the equality
$\frac{1}{2}(\dim\, X - \dim\, S_{d}) = \dim\, \pi^{-1}(x)$ holds for some $x\in \pi(S_{d})$. Actually, such a stratum is 
either the big open stratum $\pi_{\star}X_{n+1}\setminus \pi_{\star}X_{n}$, or the codimension-two torus orbits in $E\subseteq X$.    
\end{remark}

\section{Decomposition theorem}\label{s-two}

In this section, we focus on the proof of Theorem \ref{t-main}. 
We keep the same notations as in Section \ref{s-one}. Namely $\pi: \tilde{X}\rightarrow X$ is the contraction map of our $\TT$-variety $X$, the morphism
$\gamma: \tilde{X}\rightarrow Y_{0}$ is the surjective global quotient map, and $V$ is 
its general fiber. We also assume that $Y_{0}$ is a complete curve.
\\

\subsection{Local slices}
Our first aim is to describe the local structure of the contraction map (see \ref{subsec-cont-map-loc}). We first start with some preparations.
\begin{lemma}\label{l-redu}
Let $G$ be a  reductive connected linear algebraic group acting on an affine variety $B$ and assume that the algebra of  invariants $\CC[B]^G$ is equal to $\CC$. Then $B$ has exactly one closed orbit, and if $H$ is the isotropy group of any point of this orbit, then
there exists an action of $H$ on an affine reduced  scheme $B_0$ of finite type over $\CC$ such that the following 
assertions hold.
\begin{itemize}
\item[(1)] $B_0$ contains exactly one closed orbit, which is a fixed point, and any orbit closure in $B_0$ contains this point. In
particular, $B_0$ is connected. 
\item[(2)] We have a  homogeneous fiber space decomposition $B\simeq G\times^H B_0$. In particular,  if $B$ is normal, then $B_0$ is normal
and  irreducible. 
\end{itemize}
\end{lemma}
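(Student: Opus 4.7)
First, I would establish uniqueness of the closed orbit by GIT. Since $G$ is reductive and $B$ is affine, two orbit closures in $B$ meet if and only if their images in the affine GIT quotient $B /\!/ G = \spec \CC[B]^G$ agree. The hypothesis $\CC[B]^G = \CC$ forces $B /\!/ G$ to be a single point, so all orbit closures meet; since the set of closed orbits injects into $B/\!/G$, there is exactly one closed orbit $O$. Pick $x_0 \in O$ and set $H = G_{x_0}$. Because $O$ is closed in the affine variety $B$, the homogeneous space $G/H \simeq O$ is affine, and Matsushima's theorem then yields that $H$ is reductive.

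Second, I would construct $B_0$ and the decomposition. The cleanest route is to produce a $G$-equivariant morphism $p : B \to G/H$ restricting to the identity on $O \hookrightarrow B$, and then define $B_0 := p^{-1}(eH)$ (with its induced reduced, $H$-stable scheme structure). To build $p$, I would embed $B$ $G$-equivariantly into a finite-dimensional $G$-module $V$, decompose $V$ as an $H$-module using reductivity of $H$ into $T_{x_0}O \oplus N$, and use the unique-closed-orbit hypothesis together with Luna's étale slice theorem to upgrade the natural étale map $G \times^H (B \cap (x_0 + N)) \to B$ to a genuine isomorphism. (The presence of a unique closed orbit prevents any nontrivial covering from appearing.) This then gives $B \simeq G \times^H B_0$ with $B_0 \subseteq B$ the slice through $x_0$.

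Third, I would verify properties (1) and (2). Any closed $H$-orbit in $B_0$ gives a closed $G$-orbit in $B$ via $G \times^H B_0 \simeq B$, which must coincide with $O$; intersecting $O$ with $B_0$ gives only the $H$-fixed point $x_0$. For an arbitrary $H$-orbit $Hb \subseteq B_0$, the corresponding $G$-orbit $Gb$ in $B$ has closure containing $O$, and pulling back through the fibration shows $\overline{Hb} \ni x_0$. In particular, every irreducible component of $B_0$ contains $x_0$, so $B_0$ is connected.

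Fourth, for the normality/irreducibility statement, I would use that $G \times B_0 \to G \times^H B_0 \simeq B$ is a principal $H$-bundle (hence smooth and surjective) and that $G \times B_0 \to B_0$ is faithfully flat with smooth connected fibers. If $B$ is normal, descent along these smooth surjections yields that $B_0$ is normal; irreducibility of $B_0$ then follows from connectedness plus normality.

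The main obstacle will be the third step: producing a genuine $G$-equivariant morphism $B \to G/H$ (rather than merely an étale local slice), since Luna's theorem is a priori only étale. The hypothesis of a unique closed orbit, combined with reductivity of $H$ and the fact that $\CC[B]$ is a rational $G$-module, is exactly what is needed to globalize the slice; making this globalization precise (and handling the case where $B$ is not normal, so $B_0$ need only be reduced) is the delicate point.
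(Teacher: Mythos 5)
The paper itself offers no argument here: its proof of Lemma \ref{l-redu} is a one-line citation to \cite[Section 6.7]{PV89}, and your plan is essentially a reconstruction of that standard argument (uniqueness of the closed orbit because $\spec\,\CC[B]^{G}$ is a point, Matsushima to make $H$ reductive, Luna's slice theorem to produce the homogeneous fiber space decomposition, and flat ascent/descent for normality). Your first, third and fourth steps are correct and essentially complete as sketched.

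The genuine gap sits in your second step, exactly where you flag the delicacy. First, the slice should not be $B\cap(x_{0}+N)$ itself: Luna's theorem only produces an $H$-stable \emph{saturated} affine neighbourhood $S$ of $x_{0}$ inside $B\cap(x_{0}+N)$ for which $G\times^{H}S\to B$ is \'etale onto a saturated open subset; the full intersection may carry extra closed $H$-orbits, and the map from all of it need not be \'etale away from $O$. Second, and more importantly, the justification ``a unique closed orbit prevents any nontrivial covering'' does not upgrade an \'etale surjection to an isomorphism: the map is not proper, so there is no degree to propagate from its restriction to $O$, and since $B$ is not assumed normal you cannot invoke Zariski's Main Theorem either. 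The correct mechanism is the \emph{strong} form of Luna's theorem: $G\times^{H}S\to B$ is strongly \'etale, i.e.\ it fits into a Cartesian square over the \'etale map $S/\!/H\to B/\!/G$. Since $B/\!/G=\spec\,\CC$ is a single reduced point and $S$ may be chosen connected (so that $S/\!/H$ is also a single reduced point), the base map is an isomorphism, and Cartesianness forces $G\times^{H}S\to B$ to be an isomorphism; surjectivity onto $B$ comes from the image being a saturated open set containing the unique closed orbit. Taking $B_{0}:=S$, the rest of your plan goes through verbatim.
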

\begin{proof}
The proof can be found in \cite[Section 6.7]{PV89}.
\end{proof}
Now we apply the preceding result in the special case of torus actions of complexity one.
\begin{lemma}\label{l-pointf} Let $C$ be a smooth projective curve and let $\D\in \PPDiv(C, \sigma)$. Denote by $R =  A(C, \D)$ the associated algebra and by $B =  \spec\, R$ the associated $\TT$-variety. Consider the following sublattices
$$M_1 =  \{m\in M\, |\, \{m, -m\}\subseteq \sigma^\vee\}\text{ and }M' =  \{ m\in M_1\, |\, R_m\neq \{0\}\}, $$
where $R_m$ is the graded piece of $R$ of degree $m$. Also set $N_0 = \{v\in N\, |\, \langle  m , v\rangle = 0 \text{ for any }m\in M_1\}$. Pick a point $x$ in the unique closed orbit $O$ of $B$. Then we have the isomorphisms
$$ O\simeq \TT/\TT_{x}\simeq {\rm Hom}( M', \CC^{\star})\text{ and } \TT/\TT_{x}^0\simeq {\rm Hom}( M_1, \CC^{\star}).$$
Moreover, since $N_0$ is satured in $N$, we may consider a direct sum decomposition $N  =  N_0\oplus N_1$ giving rises
to a torus decomposition $\TT =  \TT_{N_0}\times \TT_{N_1}$. Then, with the notations of Lemma \ref{l-fiber}, we have an identification $\TT_O \simeq  \TT_{N_1}$
and a  homogeneous fiber space decomposition $B\simeq \TT_O\times^{\Gamma_x}B_1$, where $B_1$ is a normal
affine $\TT_O$-variety.
\end{lemma}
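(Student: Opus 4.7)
The plan is to invoke Lemma \ref{l-redu} with $G = \TT$ and then perform a character-lattice analysis. First, one verifies the hypothesis $R^{\TT} = \CC$: the weight-zero piece is $R_0 = H^0(C, \mathcal{O}_C(\D(0))) = H^0(C, \mathcal{O}_C) = \CC$, using $\D(0) = 0$ and the projectivity of $C$. Lemma \ref{l-redu} then yields the unique closed orbit $O \simeq \TT/\TT_x$ in $B$ and a homogeneous fiber space decomposition $B \simeq \TT \times^{\TT_x} B_0$ for some normal affine $\TT_x$-scheme $B_0$.

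Next, I would identify the character lattice $M^{\TT_x}$ of the closed orbit. For $m \in M_1 = M \cap \sigma^{\perp}$, the inclusion $\deg \D \subseteq \sigma$ together with $\langle m, \sigma \rangle = 0$ forces $\min_{v \in \deg \D}\langle m, v\rangle = 0$, and positivity condition (ii) makes $\D(dm)$ principal for some $d > 0$. Taking inverses of generators then shows that, for $m \in M_1$, one has $R_m \neq \{0\}$ if and only if $R_{-m} \neq \{0\}$; in particular $M'$ is a sublattice of $M_1$ of finite index, so $\mathrm{sat}(M') = M_1$. Since $\CC[O] \simeq \CC[M^{\TT_x}]$ has one-dimensional weight spaces on $M^{\TT_x}$ and zero elsewhere, and the surjection $R \twoheadrightarrow \CC[O]$ is $M$-graded, one gets $M^{\TT_x} \subseteq M'$ directly. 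For the reverse inclusion, pick $m \in M'$ and choose $f \in R_m$, $g \in R_{-m}$ non-zero; then $fg \in R_0 = \CC$ is non-zero, so primality of $I(O)$ forces either $f\chi^m$ or $g\chi^{-m}$ to lie outside $I(O)$, giving $m$ or $-m \in M^{\TT_x}$, hence $m \in M^{\TT_x}$. This establishes $M^{\TT_x} = M'$, whence $O \simeq \mathrm{Hom}(M', \CC^{\star})$; saturating yields $M^{\TT_x^{0}} = M_1$ and $\TT/\TT_x^{0} \simeq \mathrm{Hom}(M_1, \CC^{\star})$.

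The sublattice $N_0 = \mathrm{Ann}(M_1) \cap N$ is saturated since $M_1$ is; choosing a complementary $N_1$ gives $N = N_0 \oplus N_1$ and $\TT = \TT_{N_0} \times \TT_{N_1}$. Comparing character lattices, $\TT_{N_1}$ has character group $M_1$, so $\TT_O \simeq \TT_{N_1}$ and $\TT_x^{0} = \TT_{N_0}$ under this decomposition. To obtain the final homogeneous fiber space decomposition, start from $B \simeq \TT \times^{\TT_x} B_0$. The extension $1 \to \TT_{N_0} \to \TT_x \to \Gamma_x \to 1$ splits because $\TT_{N_0}$ is divisible, and one can arrange the splitting so that $\Gamma_x \hookrightarrow \TT_{N_1}$. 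Quotienting $\TT \times^{\TT_x} B_0$ first by $\TT_{N_0} = \TT_x^{0}$ (the orbit map $(h, b) \mapsto h \cdot b$ identifies $\TT_{N_0} \times^{\TT_{N_0}} B_0$ with $B_0$) and then by the residual $\Gamma_x$-action yields $B \simeq \TT_{N_1} \times^{\Gamma_x} B_0 = \TT_O \times^{\Gamma_x} B_1$, with $B_1 = B_0$ equipped with its natural $\Gamma_x$-action and a $\TT_O$-structure inherited from the decomposition of $\TT$.

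The main obstacle will be the inclusion $M' \subseteq M^{\TT_x}$ in the middle step: namely, establishing that every weight $m$ with $R_m \neq \{0\}$ actually descends to a non-vanishing character on the closed orbit. This depends crucially on the symmetric non-vanishing $R_m \neq \{0\} \Leftrightarrow R_{-m} \neq \{0\}$ for $m \in M_1$ (itself a consequence of positivity (ii)), combined with primality of $I(O)$; the subsequent manipulations with $\TT = \TT_{N_0} \times \TT_{N_1}$ are essentially formal.
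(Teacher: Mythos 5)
Your proposal is correct and follows essentially the same route as the paper: both reduce to Lemma \ref{l-redu} (after checking $R^{\TT}=R_0=\CC$), identify the character lattice of the closed orbit with $M'$ using the properness conditions on $\D$, and then derive $B\simeq\TT_O\times^{\Gamma_x}B_1$ from the splitting $\TT=\TT_{N_0}\times\TT_{N_1}$ with $\TT_x^{0}=\TT_{N_0}$. The only cosmetic difference is in how $\CC[O]\simeq\CC[M']$ is established: you use the graded surjection $R\twoheadrightarrow\CC[O]$ together with primality of $I(O)$, whereas the paper exhibits the ideal explicitly as $\bigoplus_{m\in(\sigma^\vee\cap M)\setminus M_1}R_m$ (an ideal because $\mathrm{span}(M_1)$ is a face of $\sigma^\vee$) and identifies the quotient $R'=\bigoplus_{m\in M'}R_m$ with $\CC[O]=R^{\TT_{N_0}}$.
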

\begin{proof}
The vector space generated by $M_1$ in $M_{\QQ}$ is a face of  $\sigma^\vee$. From this, one sees that 
$I =  \bigoplus_{m\in (\sigma^\vee\cap M)\setminus M_1}R_m$ is an ideal of $R$ whose quotient 
is isomorphic to $R' =   \bigoplus_{m\in M'}R_m$. Using the properness conditions on $\D$, one observes
that each piece $R_m$ for $m\in M'$ is one-dimensional and therefore $R'$ is the algebra of regular functions
of $O$. The quotient map
$$\varepsilon: B\simeq \TT\times^{\TT_x}B_1\rightarrow \TT/\TT_x\simeq O$$
from Lemma \ref{l-redu} is actually given by the inclusion of rings $R'\subseteq R$. But also note that $R' =  R^{\TT_{N_0}}$ by the very definition of $N_0$, and so $\varepsilon$ is the global quotient for the  $\TT_{N_0}$-action on $B$. Remarking that $\varepsilon$ is  $\TT_{N_1}$-equivariant and that  $\TT_{N_1}$ transitively acts on $O$ with
stabilizers isomorphic to $\Gamma_x$, one obtains that $\varepsilon$ induces the  homogeneous fiber space decomposition  $B\simeq \TT_O\times^{\Gamma_x}B_1$, as required.
\end{proof}
\begin{corollary}\label{cor-self}
With the same notation of Lemma \ref{l-pointf}, the point $x\in O$ has connected stabilizers if and only if  for every $m\in \sigma^{\vee}\cap M$ such that $\min_{v\in \deg(\D)}\langle m , v\rangle =  0$, the divisor $\D(m)$ is principal.
\end{corollary}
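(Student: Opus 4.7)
The plan is to reduce the biconditional to the condition $M' = M_{1}$ via Lemma~\ref{l-pointf}, and then to translate that equality into the principality of $\D(m)$ on the projective curve $C$. By Lemma~\ref{l-pointf}, the finite group $\Gamma_{x} = \TT_{x}/\TT_{x}^{0}$ is Cartier-dual to the cokernel of the natural inclusion $M'\hookrightarrow M_{1}$. Therefore $x$ has connected stabilizers if and only if $M' = M_{1}$, which amounts to requiring $R_{m}\neq\{0\}$ for every $m \in M_{1} = \sigma^{\perp}\cap M$.

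The main technical step I would prove is the following criterion: for any $m\in\sigma^{\vee}\cap M$ satisfying $\min_{v\in\deg(\D)}\langle m,v\rangle = 0$, one has $R_{m}\neq\{0\}$ if and only if $\D(m)$ is principal. The $(\Leftarrow)$ direction is immediate: if $\D(m) = \mathrm{div}(h)$, then $h^{-1}$ lies in $H^{0}(C,\mathcal{O}_{C}(\D(m))) = R_{m}$. For the $(\Rightarrow)$ direction, pick a nonzero $f\in R_{m}$, so $\mathrm{div}(f) + \D(m)\geq 0$. By properness~(ii), there exist $d > 0$ and $g\in\CC(C)^{\times}$ with $d\D(m) = \mathrm{div}(g)$; hence
\[
\mathrm{div}(f^{d}g) = d\bigl(\mathrm{div}(f) + \D(m)\bigr)\geq 0.
\]
Since $C$ is a complete projective curve, every effective principal divisor on $C$ must vanish, so $f^{d}g \in \CC^{\times}$. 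It follows that $d\,\mathrm{div}(f) = -d\D(m)$, whence $\D(m) = \mathrm{div}(f^{-1})$ is principal.

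To conclude, observe that $M_{1}$ is contained in the quantification set of the corollary: if $m\in\sigma^{\perp}\cap M$, then $\langle m,v\rangle = 0$ for all $v\in\sigma\supseteq\deg(\D)$, so in particular $\min_{v\in\deg(\D)}\langle m,v\rangle = 0$. Combining the first paragraph with the criterion applied on $M_{1}$, the equality $M' = M_{1}$ becomes equivalent to $\D(m)$ being principal for every $m\in M_{1}$, and applying the criterion to every $m\in\sigma^{\vee}\cap M$ in the corollary's set delivers the biconditional. The step I expect to be most delicate is this last reduction: one direction is automatic since $M_{1}$ is contained in the full set, while the other requires propagating principality from the minimal face $\sigma^{\perp}$ of $\sigma^{\vee}$ to the larger faces of $\sigma^{\vee}$ that meet $\deg(\D)$ nontrivially, which should exploit the piecewise-linear structure of $m\mapsto\D(m)$ together with the rigidity provided by property~(ii) of properness.
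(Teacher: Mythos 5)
Your reduction to the equality $M'=M_{1}$ via Lemma \ref{l-pointf}, and your criterion that for $m\in\sigma^{\vee}\cap M$ with $\min_{v\in\deg(\D)}\langle m,v\rangle=0$ one has $R_{m}\neq\{0\}$ if and only if $\D(m)$ is principal, are both correct; the latter is exactly the content the paper leaves implicit (its proof reads ``Self evident''), and your argument for it --- combining $\mathrm{div}(f)+\D(m)\geq 0$ with property (ii) of properness and the fact that an effective principal divisor on a projective curve vanishes --- is clean. Together with the observation $M_{1}=\sigma^{\perp}\cap M\subseteq S$, where $S$ denotes the corollary's quantification set, this gives a complete proof of the implication ``principality on $S$ implies connected stabilizer'' (the direction actually invoked in the proof of Theorem \ref{t-main}(i)), and of the full biconditional with $S$ replaced by $M_{1}$.

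The step you flag as delicate --- propagating principality from $M_{1}$ to all of $S$ --- is, however, not merely a gap: it fails, so no appeal to piecewise linearity or to property (ii) can close it. Take $C$ elliptic, $N=\ZZ^{2}$, $\sigma=\QQ_{\geq 0}^{2}$, and $\D$ with $\D_{p}=(0,1)+\sigma$, $\D_{q}=(1,-1)+\sigma$ for points $p,q$ such that $[p]-[q]$ has order exactly $2$ in $\mathrm{Pic}^{0}(C)$. Then $\deg(\D)=(1,0)+\sigma\subsetneq\sigma$, and since $\D(m)=m_{2}[p]+(m_{1}-m_{2})[q]$ satisfies $\min_{v\in\deg(\D)}\langle m,v\rangle=m_{1}$ and $\D(2m)$ is principal whenever $m_{1}=0$, the divisor $\D$ is proper. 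Here $M_{1}=\{0\}$, so the closed orbit is a fixed point with stabilizer $\TT$, which is connected; yet $m=(0,1)$ lies in $S$ and $\D(m)=[p]-[q]$ is not principal. Hence the forward implication, with the quantifier taken literally over $S=\{m\in\sigma^{\vee}\cap M\,:\,\min_{v\in\deg(\D)}\langle m,v\rangle=0\}$, cannot be proved. The statement that is genuinely self-evident (and that suffices for Lemma \ref{l-handy} and the constructions of Section \ref{sec-main}) is the one quantified over $M_{1}$, i.e.\ over those $m$ with $\langle m,\cdot\rangle$ identically zero on $\deg(\D)$; in that form your first two paragraphs already constitute a complete and correct proof, and you should simply drop the attempted extension to the larger set.
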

\begin{proof} Self evident. 
\end{proof}

\subsection{Local structure of the contraction map}\label{subsec-cont-map-loc}
We now deal with the initial notations from the beginning of Section \ref{s-one}. 
In the sequel we will fix an orbit $O\subseteq E$ of the image of the exceptional locus of the contraction map $\pi$. 
For any $y\in Y_{0}$ we denote by $O_{y}$ the $\TT$-orbit in $\pi^{-1}(O)$ mapping onto $y$ via the quotient map. Let us consider the subset
$$\tilde{X}_{O}:= \{z\in \tilde{X}\,|\, O_{\gamma(z)}\subseteq \overline{\TT\cdot z}\}.$$
Our local structure result can be expressed as follows. 
\begin{lemma}\label{l-structure}
The subset $\tilde{X}_{O}$ is a Zariski dense open subset of $\tilde{X}$, and an orbit
$O'\subseteq \tilde{X}_{O}$ is closed in $\tilde{X}_{O}$ if and only if $O'\subseteq \pi^{-1}(O)$. There exists a
 homogeneous fiber space decomposition  $\tilde{X}_{O}\simeq \TT_O\times^{\Gamma_x}\tilde{X}_1$  which gives rise to
a Cartesian commutative diagram
 $$\xymatrix{
      \tilde{X}_O \simeq \TT_O\times^{\Gamma_x}\tilde{X}_1\,\,\ar[r] \ar[d] ^{\pi} & \tilde{X}_{1}/\Gamma_x \ar[d]^{\pi_{1}}\\X_O \simeq \TT_O\times^{\Gamma_x}X_1\,\, \ar[r] & X_{1}/\Gamma_x}.$$ 
Here the horizontal arrows are Seifert torus bundles (see Definition \ref{def-Seifertb}) and the vertical arrows are proper morphisms. The image $X_O = \pi(\tilde{X}_O)$ is affine Zariski open in $X$ and has $O$ as unique closed orbit. Moreover, if we denote again by $x$ the image of $x$ in $X_{1}/\Gamma_{x}$, then the preimage of $x$ under $X_{O}\rightarrow X_{1}/\Gamma_{x}$ is $O$. 
\end{lemma}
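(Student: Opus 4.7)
The plan is to realize $\tilde{X}_O$ as $\pi^{-1}(X_O)$ for a suitable affine $\TT$-stable open neighborhood $X_O$ of $O$ in $X$, and then to apply Lemma \ref{l-pointf} to $X_O$ and to $\tilde{X}_O$ in parallel, the compatibility coming from the $\TT$-equivariance and properness of $\pi$.

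For the first step, Sumihiro's theorem provides an affine $\TT$-stable open neighborhood of $O$ in $X$; shrinking it we may assume it is a single chart $X_O$ of the divisorial fan of $X$ (Section \ref{sec-r-tvar}) having $O$ as its unique closed $\TT$-orbit, in the hyperface correspondence recalled at the end of Section \ref{sec-r-tvar}. Since $X$ is irreducible, $X_O$ is Zariski dense, and hence so is $\pi^{-1}(X_O)$ in $\tilde{X}$. The identification $\tilde{X}_O = \pi^{-1}(X_O)$ goes as follows. Because $\gamma$ is $\TT$-invariant, one has $\overline{\TT\cdot z}\subseteq \gamma^{-1}(\gamma(z))$ for every $z\in\tilde{X}$; intersecting with $\pi^{-1}(O)$, which by Lemma \ref{l-fiber} meets the fiber $\gamma^{-1}(\gamma(z))$ in exactly the single $\TT$-orbit $O_{\gamma(z)}$, we conclude that $\overline{\TT\cdot z}\cap\pi^{-1}(O)$ is either empty or equal to $O_{\gamma(z)}$. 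Properness and $\TT$-equivariance of $\pi$ yield $\pi(\overline{\TT\cdot z}) = \overline{\TT\cdot \pi(z)}$, so non-emptiness translates into $O\subseteq \overline{\TT\cdot \pi(z)}$, i.e. $\pi(z)\in X_O$. This gives $\tilde{X}_O = \pi^{-1}(X_O)$, and the orbits closed in $\tilde{X}_O$ are precisely the orbits $O_y\subseteq \pi^{-1}(O)$ for $y\in Y_0$.

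For the second step, write $X_O = X(Y_O,\D^O)$ with $\D^O\in\PPDiv(Y_O,\sigma)$ the polyhedral divisor of the chosen chart. Applying Lemma \ref{l-pointf} furnishes a homogeneous fiber space decomposition $X_O\simeq \TT_O\times^{\Gamma_x}X_1$, with $X_1$ a normal affine $\TT_O$-variety whose unique closed orbit is a $\Gamma_x$-fixed point identified with $x$. Applying the same construction to each chart of $\tilde{\EE}$ whose image lies in $X_O$, and gluing via the $\TT$-equivariance of $\pi$, produces the parallel decomposition $\tilde{X}_O\simeq \TT_O\times^{\Gamma_x}\tilde{X}_1$ with $\tilde{X}_1$ normal; the stabilizer data $(\TT_x^0,\Gamma_x)$ is the same on both sides by Lemma \ref{l-fiber}, so the same $\TT_O$ and $\Gamma_x$ appear.

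The horizontal arrows in the claimed diagram are then the Seifert torus bundles of Definition \ref{def-Seifertb}. The square is Cartesian because $\pi$ is $\TT_O$-equivariant and the fibers of the horizontal arrows are $\TT_O$-orbits, so they match identically under $\pi$. Properness of $\pi_1$ is inherited from that of $\pi$ by passing to $\TT_O$-quotients. Finally, the preimage of $x$ via $X_O\to X_1/\Gamma_x$ is $\TT_O\cdot\{x\}/\Gamma_x$, which coincides with $O$ by Lemma \ref{l-pointf}. The principal delicate point is the identification $\tilde{X}_O = \pi^{-1}(X_O)$ carried out in the first step, which rests entirely on the fiber description $\pi^{-1}(O)\simeq \TT_O\times^{\Gamma_x}\pi^{-1}(x)$ supplied by Lemma \ref{l-fiber}, and on the fact that among the orbits in $\pi^{-1}(O)$ sitting over a fixed $y\in Y_0$ the orbit $O_y$ is unique.
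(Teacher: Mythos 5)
Your overall strategy matches the paper's: identify $\tilde{X}_{O}$ with the preimage of a distinguished affine chart $X_{O}$, apply Lemma \ref{l-pointf} to $X_{O}$, and transport the resulting homogeneous fiber space decomposition to $\tilde{X}_{O}$. Your first step is argued by a slightly different and perfectly valid route: the paper identifies $\tilde{X}_{O}$ directly with the chart of $\tilde{\EE}$ attached to the hyperface of $\bar{O}$ and invokes Timashev's dictionary between orbit closures and face relations, whereas you derive $\tilde{X}_{O}=\pi^{-1}(X_{O})$ from the definition of $\tilde{X}_{O}$, the single-orbit-per-fiber description of $\pi^{-1}(O)$ in Lemma \ref{l-fiber}, and properness of $\pi$ (so that $\pi(\overline{\TT\cdot z})=\overline{\TT\cdot\pi(z)}$). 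This is a clean, more dynamical argument, though note that you still rely implicitly on the same orbit--hyperface correspondence to know that the chart $X_{O}$ coincides with $\{w\in X\mid O\subseteq\overline{\TT\cdot w}\}$, i.e.\ that every orbit closure in $X_{O}$ contains $O$ and conversely; this is exactly where the paper cites \cite[Theorem 16.19]{Tim11}, and you should too.

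The one step whose execution does not work as written is the construction of the decomposition $\tilde{X}_{O}\simeq\TT_{O}\times^{\Gamma_{x}}\tilde{X}_{1}$ by ``applying the same construction to each chart of $\tilde{\EE}$ whose image lies in $X_{O}$, and gluing.'' Lemma \ref{l-pointf} applies to $X(\D)$ for $\D$ a polyhedral divisor over a \emph{projective} curve (it needs $\CC[B]^{\TT}=\CC$ and a unique closed orbit via Lemma \ref{l-redu}); the charts of $\tilde{\EE}$ have affine loci, their rings of invariants are not $\CC$, and they have no distinguished closed orbit, so the lemma does not apply to them and there is nothing to glue. The correct move, which is what the paper does and what your opening sentence about ``compatibility via $\TT$-equivariance of $\pi$'' gestures at, is simply to compose: Lemma \ref{l-pointf} gives a $\TT_{O}$-equivariant map $\varepsilon\colon X_{O}\rightarrow\TT_{O}/\Gamma_{x}$, and the composite $\varepsilon\circ\pi\colon\tilde{X}_{O}\rightarrow\TT_{O}/\Gamma_{x}$ is a $\TT_{O}$-equivariant map onto a homogeneous space, so Remark \ref{r-fiberprod} applied to it yields $\tilde{X}_{O}\simeq\TT_{O}\times^{\Gamma_{x}}\tilde{X}_{1}$ with $\tilde{X}_{1}=\pi^{-1}(\varepsilon^{-1}(x))$ in one stroke; this also makes $X_{1}=\varepsilon^{-1}(x)$, $\pi_{1}$ the restriction of $\pi$, and the Cartesian property of the square immediate. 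Once you replace the chart-by-chart gluing with this composition, the rest of your argument (Cartesianness, properness of $\pi_{1}$, and the identification of the fiber over $x$ with $O$) goes through.
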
 
\begin{proof}
From the definition of $\tilde{X}_{O}$, we remark that for general $y\in Y_{0}$ the set $F_{y}\cap \tilde{X}_{O}$ is an affine $\TT$-stable Zariski open subset of the toric variety $V$ that does not depend
on $y$. Let $\sigma$ be the corresponding cone in the fan of $V$. From the description of $\tilde{X}$ in terms of divisorial fans in \cite[Theorems 5.3, 5.6]{AHS08}, we may find a $\TT$-stable Zariski open subset $X_{0}$
of $\tilde{X}$ which is the relative spectrum of a $\sigma$-polyhedral divisor $\D$ with locus $Y_{0}$ and containing $\pi^{-1}(O)$. Strictly speaking, we choose $\D$ to be exactly the polyhedral divisor corresponding to the hyperface associated with the orbit closure $\bar{O}\subseteq X$ in the sense of \cite[Theorem 16.19]{Tim11} (see also the discussion at the end of Section \ref{sec-r-tvar}). Using the dictionary between orbit closures and face relations from \emph{loc. cit.} and the definition of $\tilde{X}_{O}$, we get $\tilde{X}_{O}= X_{0}$.

We prove the existence of the decomposition  $\tilde{X}_{O}\simeq \TT_O\times^{\Gamma_x}\tilde{X}_1$.
By the description of $\tilde{X}_{O}$ in terms of hyperfaces, one sees that the image $X_O = \pi(\tilde{X}_{O})$
is open, affine  and $\TT$-stable in $X$ and has $O$ as unique closed orbit. Now applying Lemma  \ref{l-pointf} we get a  homogeneous fiber space decomposition $X_O \simeq \TT_O\times^{\Gamma_x}X_1$ and a $\TT_O$-equivariant map $\varepsilon: X_O\rightarrow \TT_O/\Gamma_x$ (note that we choose a decomposition $\TT\simeq \TT_{N_0}\times \TT_{N_1}$ as in  \ref{l-pointf}
and we use the identification  $\TT_O \simeq  \TT_{N_1}$). Hence by composing we have a $\TT_O$-equivariant map $\tilde{X}_O\rightarrow \TT_O/\Gamma_x$ yielding the desired decomposition $\tilde{X}_{O}\simeq \TT_O\times^{\Gamma_x}\tilde{X}_1$ (see Remark \ref{r-fiberprod}).
By construction, it follows that  $X_1 = \varepsilon^{-1}(x)$ and $\tilde{X}_1 = \pi^{-1}(\varepsilon^{-1}(x))$.
Hence the morphism $\pi_1: \tilde{X}_1/\Gamma_x \rightarrow X_1/\Gamma_x$ is induced by restriction of the contration map $\pi$.
It is then clear that we have a Cartesian commutative diagram as in the statement.  
\end{proof} 

\begin{lemma}\label{l-loc} 
Consider the decomposition  given by
Lemma \ref{l-structure}, where $O$ is an orbit in the image $E$ of the exceptional locus of $\pi$.
Assume that the stabilizers of $O$ are connected and  set $r = \dim\, O$. 
Then we have the isomorphisms
$$\H^{j}(\pi_{\star}IC_{\tilde{X}})_{|O} \simeq  \H^{j+r} ((\pi_{1})_{\star}IC_{\tilde{X}_{1}})_{x}\otimes \QQ_{O}.$$
 In particular, each restriction of the sheaf $\H^j(\pi_{\star} IC_{\tilde{X}})$ on every orbit is constant. 
\end{lemma}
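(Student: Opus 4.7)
The plan is to exploit the local product structure given by Lemma \ref{l-structure} together with the standard K\"unneth-type formula for intersection cohomology complexes of a product.

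First I would use the connectedness hypothesis to trivialize the finite group action. Since the stabilizers of points of $O$ are connected, $\Gamma_x = \TT_x/\TT_x^0$ is trivial, so the homogeneous fiber spaces $\TT_O\times^{\Gamma_x}\tilde{X}_1$ and $\TT_O\times^{\Gamma_x}X_1$ appearing in Lemma \ref{l-structure} become honest direct products, and the identification $\TT_O\simeq \TT/\TT_x\simeq O$ holds. Thus the restriction of $\pi$ to the Zariski open subset $\tilde{X}_O$ is identified with the product map
$$\mathrm{id}_O\times\pi_1\colon O\times\tilde{X}_1\longrightarrow O\times X_1,$$
and the orbit $O\subseteq X_O$ corresponds to $O\times\{x\}$, because Lemma \ref{l-structure} asserts that the preimage of $x$ under the second projection $X_O\to X_1/\Gamma_x=X_1$ is $O$ itself.

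Next I would reduce the computation of $\H^j(\pi_\star IC_{\tilde{X}})|_O$ to a computation on $X_O$. Because the inclusion $X_O\hookrightarrow X$ is open and $\tilde{X}_O=\pi^{-1}(X_O)$, restriction commutes with direct image, so
$$\bigl(\pi_\star IC_{\tilde{X}}\bigr)|_{X_O}\simeq (\pi|_{\tilde{X}_O})_\star\bigl(IC_{\tilde{X}}|_{\tilde{X}_O}\bigr)\simeq (\pi|_{\tilde{X}_O})_\star IC_{\tilde{X}_O}.$$
Here I use that $IC$ of an open subvariety is the restriction of the ambient $IC$.

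The key step is then the product formula: since $O$ is smooth of dimension $r$, one has $IC_O=\QQ_O[r]$, and the K\"unneth formula for intersection cohomology gives
$$IC_{O\times\tilde{X}_1}\simeq \QQ_O[r]\boxtimes IC_{\tilde{X}_1}.$$
Combined with the compatibility of proper pushforward with external tensor products (K\"unneth for $(\mathrm{id}_O\times\pi_1)_\star$, which is essentially smooth base change along the identity factor), one obtains
$$(\pi|_{\tilde{X}_O})_\star IC_{\tilde{X}_O}\simeq \QQ_O[r]\boxtimes(\pi_1)_\star IC_{\tilde{X}_1}.$$
Taking $j$-th cohomology sheaves and stalking at a point $(z,x)\in O\times\{x\}\subseteq X_O$, the $\QQ_O[r]$ factor contributes only in degree $-r$ and yields the shift $j\mapsto j+r$, producing
$$\H^j(\pi_\star IC_{\tilde{X}})|_O\simeq \H^{j+r}\bigl((\pi_1)_\star IC_{\tilde{X}_1}\bigr)_x\otimes\QQ_O,$$
which is constant on $O$. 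The final assertion that $\H^j(\pi_\star IC_{\tilde{X}})$ is constant on every orbit then follows by applying the same reasoning to each orbit in the natural stratification (using that the analogous local product structure holds there).

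The main point where care is needed is the K\"unneth formula for intersection cohomology complexes and its compatibility with proper direct images; once the smooth factor $O$ is isolated, however, both reduce to standard base-change statements rather than delicate perverse-sheaf calculations, so the argument is essentially formal once the product structure from Lemma \ref{l-structure} has been recorded.
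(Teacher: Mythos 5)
Your proposal is correct and follows essentially the same route as the paper: both trivialize $\Gamma_x$ using the connectedness hypothesis, restrict to the open set $X_O$ where $\pi$ becomes $\mathrm{id}_{\TT_O}\times\pi_1$, and then isolate the smooth factor to reduce to a stalk computation at $x$. The only cosmetic difference is that the paper packages the product step as $IC_{\tilde{X}}\simeq p^{\star}IC_{\tilde{X}_{1}}[r]$ for the smooth projection $p$ followed by proper base change along the Cartesian square, whereas you phrase it as the K\"unneth isomorphism $\QQ_{O}[r]\boxtimes IC_{\tilde{X}_{1}}$ together with compatibility of proper pushforward with external products — these are the same argument.
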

\begin{proof}
The idea of the proof is inspired by the one of Bernstein and Lunts in \cite[Lemma 5.15]{BL94}.
By Lemma \ref{l-structure}, we have the Cartesian commutative diagram 
 $$\xymatrix{
      \tilde{X}_O \simeq \TT_O\times\tilde{X}_1\,\,\ar[r]^{\,\,\,\,\,\,\,\,\,\,\,\,\,\,\,p} \ar[d] ^{\pi} & \tilde{X}_{1}\ar[d]^{\pi_{1}}\\X_O \simeq \TT_O\times X_1\,\, \ar[r]^{\,\,\,\,\,\,\,\,\,\,\,\,\,\,\,q} & X_{1},}$$     
where the horizontal arrows $p$ and $q$ are the natural projection the second factor.
Indeed, the connectedness of stabilizers of the points of $O$ implies that the group $\Gamma_{x}$ is trivial in the description of \emph{loc. cit.}
  Now, since $p$ is a smooth morphism with fibers of dimension $r$,  we have that $IC_{\tilde{X}} \simeq  p^{\star}IC_{\tilde{X}_{1}}[r]$. This follows from the results in \cite[Section 4.2]{BBD82} by
adapting \cite[Lemma 2.4]{AL17} for smooth morphisms.
 Using proper base change (see \cite[Theorem 2.3.26]{Dim04}), it follows that 
$$\H^{j}(\pi_{\star}IC_{\tilde{X}})_{|O}\simeq \H^{j}(\pi_{\star}p^{\star}IC_{\tilde{X}_{1}}[r])_{|O}\simeq\H^{j}(  i^{\star}q^{\star}(\pi_{1})_{\star}IC_{\tilde{X}_{1}}[r]),$$
where $i: O\rightarrow X$ is the inclusion. Finally, we remark that the composition $q\circ i$ is the projection on $O$ onto the point $x$ of $X_{1}$. Therefore 
$$\H^{j}(  i^{\star}q^{\star}(\pi_{1})_{\star}IC_{\tilde{X}_{1}}[r])\simeq \H^{j+r} ((\pi_{1})_{\star}IC_{\tilde{X}_{1}})_{x}\otimes \QQ_{O},$$
as expected.
\end{proof}

\begin{proof}[Proof of Theorem \ref{t-main}]
$(i)$ Geometrically, one can take for each orbit $O\subseteq E$ and each point $x\in E$ the quotient by the group $\Gamma_{x}\subseteq \TT$ in the commutative diagram of Lemma \ref{l-structure}. This has the effect to change, in this diagram, 
the Seifert torus bundles  by Zariski trivial torsors with fibers isomorphic to $\TT_{O}/\Gamma_{x}$ (see Lemma \ref{l-quotquot}). Thus the image of $O$ in the resulting new $\TT/\Gamma_{x}$-variety $X/\Gamma_{x}$ have trivial stabilizers. Repeating this process for each orbit $O$, we get the desired commutative diagram of Theorem \ref{t-main} $(i)$. 

Algebraically, one can alternatively construct the group $G$ of \ref{t-main} $(i)$ by taking a finite index sublattice $M_{0}$ of the lattice $M$. The lattice $M_{0}$ has the 
property that for any $\sigma$-polyhedral divisor $\D$ having complete locus in the divisorial fan $\EE$ defining $X$ and any $m\in M_{0}$, we have $\D(m)$ principal whenever we have $\min_{v\in \D_{y}}\langle m, v\rangle = 0$ (see Corollary \ref{cor-self}). Then we set $G = {\rm Hom}(M/M_{0}, \CC^{\star})$ and consider the natural $G$-actions on $X$ and $\tilde{X}$.

Now for the comparison with the Betti numbers, we have, 
 according to Kirwan  \cite[Lemma 2.12]{Kir86}, the isomorphisms
$$IH^{j}(X; \QQ)^{G} \simeq IH^{j} (X/G; \QQ) \text{ and } IH^{j}(\tilde{X}; \QQ)^{G} \simeq IH^{j} (\tilde{X}/G; \QQ),$$
see also \cite[Proposition 5.1]{GH17} for a detailed proof communicated by MacPherson.
Moreover, since the action of $G$ extends to the action of the connected Lie group $\TT$, the actions of $G$ on the vector spaces $IH^{j}(X; \QQ)$ and $IH^{j}(\tilde{X}; \QQ)$ are trivial. Indeed,
as explained in \cite[Lemma 2.12]{Kir86}, one may reduce to the case where $X$ and $\tilde{X}$ are manifolds by taking equivariant resolutions of singularities and applying the decomposition theorem (see Theorem \ref{t-bbdtheodec}) for these maps. Doing this
reduction, the triviality of the actions of $\TT$ (and therefore of $G$) on $H^{j}(X; \QQ) = IH^{j}(X; \QQ)$ and $H^{j}(\tilde{X}; \QQ) = IH^{j}(\tilde{X}; \QQ)$ follows by adapting \cite[Proposition 6.4]{DL76} for cohomology with rational coefficients. This ends the proof of Part $(i)$.
\\

$(ii)$  We assume that the stabilizers of the points of $E$ are connected. Also we may and do assume that the global quotient $Y_{0}$ is a complete curve (otherwise the contraction map is an isomorphism and the theorem follows).
The decomposition theorem (see Theorem \ref{t-bbdtheodec}) yields an isomorphism
$$ \pi_{\star}IC_{\tilde{X}} \simeq IC_X \oplus \bigoplus_{\alpha\in I} (\iota_{\alpha})_{\star}IC_{\bar{V}_\alpha}(\L_\alpha)[-d_\alpha],$$
where $I$ is a finite set, $V_\alpha$ are irreducible smooth Zariski locally closed subvarieties, the map $\iota_{\alpha}:\bar{V}_{\alpha}\rightarrow X$ is the natural inclusion,
$d_\alpha \in \ZZ$, and $\L_\alpha$ is a semi-simple local system on $V_\alpha$. As $\pi_{|\pi^{-1}(X \setminus E)}$ is an isomorphism, it follows that 
$(\pi_{\star} IC_{\tilde{X}})_{|X \setminus E} \simeq IC_{X \setminus E}$ and  $V_{\alpha}\subseteq E$ for any $\alpha$. This implies that $\bar{V}_\alpha= \cup_{O \in \mathcal{O}(E)} \bar{O} \cap \bar{V_\alpha}$.

Note that $E$ has finitely many orbits (see Lemma \ref{l-compl}).
This last remark implies that $\bar{V}_\alpha \subseteq \bar{O}$ for some orbit $O$ in $E$. Let $O_\alpha$ be a orbit of minimal dimension in $E$ such that $\bar{V}_\alpha \subseteq \bar{O}_\alpha$. Then $O_{\alpha}\cap V_{\alpha}\neq \emptyset$. Indeed, if not, the subvariety $V_{\alpha}$ would be contained in the orbit closure of the boundary of $O_{\alpha}$ and
this would contradict the minimality of $O_{\alpha}$.
Moreover, applying Lemma \ref{l-loc}, the sheaf $\H^{-\dim\,V_\alpha}((\iota_{\alpha})_{\star}IC_{\bar{V}_\alpha}(\L_\alpha))$ is constant on $O_\alpha$ and the restriction $\H^{-\dim\,V_\alpha}((\iota_{\alpha})_{\star}IC_{\bar{V}_\alpha}(\L_\alpha))_{|V_\alpha} \simeq \L_\alpha$ is locally constant. So the sheaf $\H^{-\dim \,V_\alpha}((\iota_{\alpha})_{\star}IC_{\bar{V}_\alpha}(\L_\alpha))$ is locally constant on $V_\alpha \cup O_\alpha$. This implies that $$O_\alpha \subseteq \supp (\H^{-\dim \,V_\alpha}((\iota_{\alpha})_{\star}IC_{\bar{V}_\alpha}(\L_\alpha))) \subseteq \bar{V}_\alpha.$$
Consequently $\bar{O}_\alpha=\bar{V}_\alpha$. This shows Assertion $(ii)$ of Theorem \ref{t-main}.
\\

$(iii)$ For the last part of Theorem \ref{t-main}, one uses the decomposition theorem for semi-small maps (see Theorem  \ref{theo-semi-decomp}). So we automatically assume 
that $\tilde{X}$ is rationally smooth. Also remember that $\pi$ is a semi-small map (see Lemma \ref{l-semi}). 
The fibers of $\pi$ are irreducible according to Lemma \ref{l-fiber} and therefore the local systems involving in the decomposition theorem are all trivial (see Condition $(ii)$ of  \ref{theo-semi-decomp}). Finally the analysis of the supports
comes from the study of the relevant strata in \ref{r-str}. This finishes the proof of the theorem. 
\end{proof}
\begin{example}\emph{ Complete $\CC^{\star}$-surfaces} (see also \cite{FK86}).\label{ex-surf}
Assume that $\dim\,X = 2$.
Then $\tilde{X}$ is rationally smooth and $E$ is a finite set with at most two elements. From Theorem \ref{t-main}, it follows that  
$$ \dim_{\QQ} \, IH^i(X; \QQ)= \dim_{\QQ} \, H^i(\tilde{X};\QQ)\text{ for }i = 0,1,3,4\text{ and } \dim_{\QQ}\, IH^2(X;\QQ)=\dim_{\QQ} \, H^2(\tilde{X}; \QQ) - |E|.$$ 
Note that the comparison between the intersection cohomology Betti numbers of $X$ and $\tilde{X}$ can be obtained from 
the comparison between the formulae in  Proposition 3.1 and Corollary 3.4 of \cite{FK86}, where the set of elliptic fixed points defined in \emph{loc. cit.} is precisely the set $E$. 
\end{example}

\section{Betti numbers}\label{s-three}
This section aims to establish Theorem \ref{t-hvector} which describes the intersection cohomology Betti numbers 
of any complete normal $\TT$-variety of complexity one. In order to do this, we will prove diverse results  on  intersection cohomology with torus action (see Sections \ref{sec-isol}, \ref{sec-attob}, \ref{sec-g-inv}).  
Recall that for every algebraic variety 
$Z$ we write $$P_{Z}(t) =\sum_{i =0}^{2d}b_{i}(Z)t^{i},\text{ where } d =  \dim\, Z\text{ and } b_{i}(Z)  =  \dim\, IH^{i}(Z; \QQ),$$
and call the polynomial $P_{Z}(t)$ the  \emph{Poincar\'e polynomial} of $Z$. Also note that, for the remainder of the paper, we will go on to use the notation on $\TT$-varieties of Section \ref{sec-r-tvar}.   We start with some preliminaries on the combinatorics and the topology of toric varieties. 

\subsection{Fans and intersection cohomology}\label{sec-sec-prem-prem}
It is known
(see for instance \cite{Sta87, DL91, Fie91}) that the intersection cohomology of an $n$-dimensional complete toric $\TT$-variety $\X$ with fan $\Sigma = \Sigma_{\X}$  (defined on $N_\QQ$) can be expressed
in terms of \emph{$h$-vectors}, namely that $P_{\X}(t)$ is determined by two relations
$$P_{\X}(t) = \sum_{\sigma \in \Sigma}(t^2-1)^{n-\dim\,\sigma}P_{\sigma}(t), \text{ and } P_\sigma(t) = \left\{
    \begin{array}{ll}
        \tau_{\leq d-1}((1-t^2)P_{\Z_{\sigma}}(t)) & \mbox{if } d\geq 3 \\
        1 & \mbox{if } d \leq 2. 
    \end{array}
\right.
$$

Here 
$$P_{\sigma}(t) = \sum_{j\geq 0}\dim_{\QQ}\, \H^{j-n}(IC_{\X})_{x_\sigma}t^j$$
is the local Poincar\'e polynomial, $d = \dim\, \sigma$, $x_\sigma$ is a point in the orbit associated with $\sigma$, and the corresponding
affine toric variety of $\sigma \in \Sigma$ is, after removing the torus factor, an affine cone over the projective toric variety $\Z_{\sigma}$ (for the
torus $(\CC^\star)^{d}$). Finally $\tau_{\leq d-1}$ stands for the truncation of polynomials to degrees $\leq d-1$. These two formulae
recursively define the \emph{$h$-numbers} $h_{i}(\Sigma_{\X}):= \dim_{\QQ}\, IH^{2i}(\X;\QQ)$. In particular, $$\dim_{\QQ}\, IH^{2i+1}(\X;\QQ) = 0\text{ for any }i, \text{ and thus } P_{\X}(t)= h(\Sigma; t^2) := \sum_{i= 0}^{n}h_{i}(\Sigma)t^{2i}.$$

We now pass to the definition of the $g$-polynomials. The projective
toric variety $Z_{\sigma}$ corresponds to a polytope $Q = Q_{\sigma}$ in the sense
that the defining fan of $Z_{\sigma}$ is the normal fan $\Sigma_{Q}$ associated with $Q$.
For any
$1 \leq i\leq d$, we let $h_{i}(Q):= h_{i}(\Sigma_{Q})$. We define the \emph{$g$-invariants} of $Q$ by setting
$g_{0}(Q) := h_{0}(Q)$ and $g_{i}(Q)= g_{i}(\sigma) :=  h_{i}(Q) - h_{i-1}(Q)$ for $1\leq i \leq \lfloor d/2 \rfloor $ and $d\geq 3$. If $d\leq 2$, then we let $g_{0}(Q) =1$
and $g_{i}(Q) = 0$ for $i>0$. In particular, we see from above that
the \emph{$g$-polynomial}  $g(Q; t^{2}) = \sum_{i =0}^{\lfloor d/2 \rfloor}g_{i}(Q)t^{2i}$ coincides with the local Poincar\'e polynomial $P_{\sigma}(t)$.
For any face $E$ of $Q$, there is a polytope $Q/E$ such that the poset of its faces is isomorphic to the 
poset of faces of $Q$ which contain $E$ (see \cite{BM99}). Moreover, according to \cite[Section 2, Proposition 2]{BM99},
if $F$ is a face of $E$, then there exist unique polynomials $g(E,F; t^{2})$ such that 
$$g(\sigma; t^{2}) = g(Q; t^{2}) = \sum_{F\prec E\prec Q} g(E, F; t^{2})g(Q/E;  t^{2}),$$
where here $\prec$ is the face relation. 
Coefficients of $g(E, F, t^{2})$ are denoted by $g_{i}(E, F)$ and are usually called  \emph{relative $g$-invariants}.  Observe that $g(Q, t^{2}) = g(Q, Q; t^{2})$. As there is
a one-to-one correspondence between faces of $\sigma$ and faces of $Q$,
a similar notation for relative $g$-invariants with respect to
the faces of $\sigma$ will be used.
\\

We now recall the main result in \cite{AL17}. In this paragraph, we consider a normal $\TT$-variety $X$ of complexity one  associated with a 
divisorial fan $\EE$ over $(Y, N)$. We assume moreover that $X$ is contraction-free (i.e., all the loci of elements of $\EE$ are affine) and complete. We denote by  $\Sigma(\EE)$ the smallest fan containing all the tails of each element of $\EE$. To every point $y\in Y$ we consider the fan $\EE_y$ of $N_\QQ\times \QQ$ generated by the cones
$$C_{y}(\D^{i}):= \cone((\sigma_i\times \{0\}) \cup (\D^{i}_{y}\times \{1\}))\text{ and }\cone((\sigma_i\times \{0\})\cup (\sigma_i\times \{-1\}))$$ 
for any $i\in I$. Also, let $\supp(\EE) = \{y\in Y\,|\, \D_{y}^i \neq \sigma_{i}\text{ for some }i\in I\}$. 

The following result determines the intersection cohomology Betti numbers in the contraction-free case. This was made possible thanks to the work of de Cataldo, Migliorini, and Musta\c{t}\u{a} \cite{CMM15} on the topology
of toric fibration (see \cite[Theorem 1.2]{AL17}\footnote{In the statement of \cite[Theorem 1.2]{AL17} (Page 166, Line 1)
the expression  ``perverse sheaves'' is inappropriate and should be replaced by ``constructible complexes of sheaves''.}). 
Remark that another way to compute it, this time in terms of the topology of the fibers of the quotient map, might be to use the results in \cite{CML08}.
\begin{theorem}\label{t-contfree}\cite[Theorem 1.1]{AL17}
Let $X$ be a complete contraction-free normal $\TT$-variety with defining
divisorial fan $\EE$ over the smooth complete curve $Y$. Let $\rho_{g}(Y)$ be the genus of $Y$ and let $r$ be the cardinality
of the finite set $\supp(\EE)$. Then  
$$P_{X}(t) = ((1-r)t^{2} + 2\rho_{g}(Y)t + 1-r)h(\Sigma(\EE); t^2) + \sum_{y\in \supp(\EE)}h(\EE_{y}; t^{2}).$$ 
\end{theorem}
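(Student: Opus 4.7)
Plan. Since $X$ is contraction-free, the rational quotient extends to a global surjective morphism $\gamma:X\to Y$, making $X$ a toroidal variety fibered over the smooth complete curve $Y$. Over $Y':=Y\setminus \supp(\EE)$ the morphism $\gamma$ is \'etale-locally trivial with fiber the complete toric variety $\X$ whose fan is $\Sigma(\EE)$, and over each $y\in\supp(\EE)$ an equivariant \'etale neighborhood of the special fiber admits an explicit toric model whose combinatorics is encoded by the fan $\EE_{y}$ in $N_{\QQ}\times\QQ$. In this way $\gamma$ is a toric fibration in the sense of de Cataldo, Migliorini, and Musta\c{t}\u{a} \cite{CMM15}, and this is the framework I would exploit throughout.

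My strategy is to apply the decomposition theorem to $\gamma$ and then pass to hypercohomology. The monodromy action of $\pi_{1}(Y')$ on the intersection cohomology of a toric fiber is trivial, since it factors through the action of a connected algebraic torus and torus actions act trivially on intersection cohomology (cf.\ the reduction used in the proof of Theorem \ref{t-main}$(i)$). Hence every local system appearing in the decomposition of $\gamma_{\star}IC_{X}$ is trivial. Combined with the structure theorem of \cite{CMM15}, every simple perverse summand of $\gamma_{\star}IC_{X}$ is supported either on the whole of $Y$ or on a single point of $\supp(\EE)$, yielding a decomposition
\[
\gamma_{\star}IC_{X}\;\simeq\;\Bigl(\bigoplus_{j}IC_{Y}[-j]\otimes IH^{\,j-\dim\X}(\X;\QQ)\Bigr)\;\oplus\;\bigoplus_{y\in\supp(\EE)}S_{y},
\]
with each $S_{y}$ a finite direct sum of shifted skyscrapers at $y$.

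Taking hypercohomology, the first summand contributes $P_{Y}(t)\cdot h(\Sigma(\EE);t^{2})=(1+2\rho_{g}(Y)t+t^{2})\cdot h(\Sigma(\EE);t^{2})$. To read off each $S_{y}$, I would compute the stalk of $\gamma_{\star}IC_{X}$ at $y$ in two different ways. By proper base change applied to the local toric model of \cite{CMM15}, the Poincar\'e polynomial of that stalk equals $h(\EE_{y};t^{2})$. On the other hand, the stalk of the generic part at $y$ equals $(1+t^{2})\cdot h(\Sigma(\EE);t^{2})$, because $\H^{\ast}(IC_{Y})_{y}$ is concentrated in two degrees with one-dimensional graded pieces. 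Subtraction gives the Poincar\'e polynomial of $S_{y}$, and summing up yields
\[
P_{X}(t)=(1+2\rho_{g}(Y)t+t^{2})\,h(\Sigma(\EE);t^{2})+\sum_{y\in\supp(\EE)}\bigl(h(\EE_{y};t^{2})-(1+t^{2})\,h(\Sigma(\EE);t^{2})\bigr),
\]
which collapses at once, using $|\supp(\EE)|=r$, to the stated identity.

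The main obstacle is the local identification at a special point $y$. The triviality of monodromy and the computation of the generic stalk are comparatively routine, but matching the total local intersection cohomology of $X$ along $\gamma^{-1}(y)$ with the purely combinatorial polynomial $h(\EE_{y};t^{2})$ genuinely requires the relative $h$- and $g$-polynomial machinery of \cite{CMM15}. Concretely, one must verify that the fan $\EE_{y}$ really describes an equivariant \'etale neighborhood of the special fiber (up to a toric factor), so that the toric formula for local intersection cohomology applies in this relative setting. Once this local identification is in place, the decomposition theorem dictates the summands and a short rearrangement of polynomials finishes the proof.
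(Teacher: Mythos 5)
The paper itself does not reprove this statement; it imports it from \cite{AL17} and only records in Remark \ref{r-contfree} how to pass from the projective to the complete case. Your overall architecture --- apply the decomposition theorem to the quotient $\gamma:X\to Y$, note that all point-supported summands live over $\supp(\EE)$ and all local systems are trivial, compute the full-support part from the generic fibre, and isolate the skyscrapers by a local computation at each $y\in\supp(\EE)$ --- is indeed the route of \cite{AL17} via \cite{CMM15}. One small remark on the generic part: triviality of the monodromy does not need (and does not follow from) the claim that it ``factors through a torus action''; it is immediate because every evaluation $\D(m)$ is supported on $\supp(\EE)$, so $\gamma^{-1}(Y\setminus\supp(\EE))$ is globally the product $(Y\setminus\supp(\EE))\times X_{\Sigma(\EE)}$.

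The genuine problem lies in your two stalk identifications, both of which are false as stated, even though the errors cancel in the difference you form. First, $Y$ is a smooth curve, so $IC_Y=\QQ_Y[1]$ and its stalk at $y$ is one-dimensional and concentrated in the \emph{single} degree $-1$, not ``two degrees''; after the shift by $t^{\dim X}$ the full-support part therefore contributes $h(\Sigma(\EE);t^2)$ to the stalk polynomial at $y$, not $(1+t^2)h(\Sigma(\EE);t^2)$. Second, the stalk $\H^{\ast}(\gamma_{\star}IC_X)_y=\mathbb{H}^{\ast}(\gamma^{-1}(y),IC_X|_{\gamma^{-1}(y)})$ cannot have Poincar\'e polynomial $t^{-\dim X}h(\EE_y;t^2)$: the top coefficient $h_{\dim X}(\EE_y)=1$ would force a nonzero group in degree $\dim X$, which is impossible (already for surfaces the fibre is a curve, $IC_X|_{\gamma^{-1}(y)}$ sits in degrees $\leq -1$, and $\mathbb{H}^{\ast}$ vanishes in degree $2$, whereas $t^{-2}h(\EE_y;t^2)$ has a nonzero coefficient there). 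The correct local input --- the actual content of the lemmas of \cite{AL17} you would need --- is
$$t^{\dim X}\sum_{m}\dim_{\QQ}\mathbb{H}^{m}\bigl(\gamma^{-1}(y),IC_X|_{\gamma^{-1}(y)}\bigr)\,t^{m}\;=\;h(\EE_{y};t^{2})-t^{2}\,h(\Sigma(\EE);t^{2}),$$
and the cleanest way to see it is the one suggested by the very definition of $\EE_y$: the complete toric variety $X_{\EE_y}$ fibres over $\P^{1}_{\CC}$ with generic fibre $X_{\Sigma(\EE)}$ and is \'etale-locally isomorphic to $X$ near $\gamma^{-1}(y)$, so running your own decomposition argument on $X_{\EE_y}\to\P^{1}_{\CC}$ gives $h(\EE_y;t^2)=(1+t^2)h(\Sigma(\EE);t^2)+(\text{skyscraper contribution at }0)$, and that skyscraper contribution is the one shared with $X$ at $y$. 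With this corrected local statement your bookkeeping goes through verbatim and yields the asserted formula; as written, the two quantities you subtract are each off by $t^{2}h(\Sigma(\EE);t^{2})$, and the justification offered for each is not valid.
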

\begin{remark}\label{r-contfree}
Note that \cite[Theorem 1.1]{AL17} is only stated in the case where $X$ is projective. Nevertheless, the proof of 
\emph{loc. cit.} extends to the complete case. Indeed, the usual Betti numbers of smooth complete varieties can
be described via $E$-polynomial methods. Thus, Theorem \ref{t-contfree} is obtained by adapting Proposition 3.2 and Lemmata 4.4, 4.5, 4.6 of \cite{AL17} in terms of $h$-invariants of complete fans. 
\end{remark}
\begin{example}\label{e-surface2}\emph{Returning to the case of complete $\CC^{\star}$-surfaces} (see also \cite{FK86}).
Assume that $X$ is a complete normal $\CC^{\star}$-surface given by a divisorial fan
over the complete curve $Y$. Let $\tilde{\EE}$ be the divisorial fan of the contraction space $\tilde{X}$. By combining Example \ref{ex-surf}, Theorem \ref{t-contfree}, and \cite[Section 1, Remark iii)]{Fie91} we get the concrete formula
$$P_{X}(t) = ((1-r)t^{2} +  2\rho_{g}(Y)t + 1-r)(t^{2} + 1) - |E|t^{2}$$ 
$$+ \sum_{y\in \supp(\EE)}(t^{4} + (\delta(\tilde{\EE}_{y})-2)t^{2} + 1),$$
where $\delta(\tilde{\EE}_{y})$ is the number of rays of the fan $\tilde{\EE}_{y}$ and the set $E\subseteq X$ is the image of the exceptional locus of the contraction map.
Actually, $|E|$ coincides with the number of polyhedral divisors in $\EE$
defined in the whole curve $Y$ and having maximal tail.
For instance, let us take the divisorial fan $\mathcal{N}$ over $\P^{1}_{\CC}$ generated by $\{\D^{1}, \D^{2}, \D^{3}\}$, where 
$$\D^{1} = \sum_{y\in \P^{1}_{\CC}\setminus\{\infty\}}\D^{1}_{y}\cdot [y] \text{ with }\D^{1}_{0} = [-1, 0]\text{ and }\D^{1}_{y} = \{0\}\text{  for any }y \in \P^{1}_{\CC}\setminus\{0, \infty\};$$
$$\D^{2} = \sum_{y\in \P^{1}_{\CC}}\D^{2}_{y}\cdot [y] \text{ with } \D^{2}_{\infty} = [1/2, \infty[  \text{ and }   \D^{2}_{y} = [0, \infty[\text{  for any }y \in \P^{1}_{\CC}\setminus\{\infty\};$$
$$\D^{3} = \sum_{y\in \P^{1}_{\CC}}\D^{3}_{y}\cdot [y] \text{ with }\D^{3}_{0} = ]-\infty, -1], \D^{3}_{\infty} = ]-\infty, 1/2]\text{ and }\D^{3}_{y} = ]-\infty, 0]$$
for any $y \in \P^{1}_{\CC}\setminus\{0, \infty\}.$
Then, denoting by $X(\mathcal{N})$ the complete $\CC^{\star}$-surface associated with $\mathcal{N}$, we obtain from above that 
$$P_{X(\mathcal{N})}(t) = ((1-2)t^{2} + 1-2)(t^{2} + 1)  - |E|t^{2} +  \sum_{y \in \{0, \infty\}}(t^{4} + (\delta(\tilde{\mathcal{N}}_{y})-2)t^{2} + 1).$$
Note that $\delta(\tilde{\mathcal{N}}_{0}) = 5$, $\delta(\tilde{\mathcal{N}}_{\infty}) = 4$, and $|E| = 2$.	Therefore $P_{X(\mathcal{N})}(t) = t^{4} + t^{2} + 1$. The surface $X(\mathcal{N})$ is in fact isomorphic to $\P^{2}_{\CC}$  (compare \cite[Example 2.2.3]{AKP15}).
\end{example}
\subsection{Isolated attractive fixed points} \label{sec-isol}
For a variety $X$ equipped with an action of the torus $\TT$, a fixed point $x\in X^{\TT}$ is \emph{attractive} if all the weights for the  $\TT$-action on the tangent space $T_{x}X$ are contained in an open half space. We refer to \cite[Proposition A2]{Bri99} for other characterizations of the notion of attractive fixed point.
As a by-product, we give a formula for the intersection cohomology Betti numbers of any normal affine $\TT$-variety of complexity one 
having a unique attractive fixed point. We start with a known lemma yielding a comparison between global and local
intersection cohomologies around an attractive fixed point locus. 

\begin{lemma}\cite[Lemma 6.5]{DL91}\label{l-localaff}
Let $X$ be a normal affine  $\TT$-variety of complexity one with a unique attractive fixed point $x$. Then 
  $$IH^j(X; \QQ) \simeq \H^{j-\dim\, X}(IC_{X})_{x}\text{ holds for any }j\in \ZZ.$$
Furthermore, let $\tilde{X}= \tilde{X}(\D)$ be a normal $\TT$-variety of complexity one  defined as the relative spectrum of the sheaf of a polyhedral divisor $\D\in\PPDiv(Y, \sigma)$ over a smooth projective curve $Y$,
where the tail $\sigma$ is assumed to be full-dimensional. Take $u\in N$ in the relative interior of the tail cone $\sigma$ of $\D$. 
In this way, $\CC^{\star}$ acts on $\tilde{X}$ via the one-parameter subgroup $\lambda^u: \CC^{\star}\rightarrow \TT$. 
Choose $d\in \ZZ_{>0}$ so that $\tilde{X}_{d} =  \tilde{X}/\mu_d(\CC)$ (where  $\mu_{d}(\CC)\subseteq \CC^{\star}$ acts by restriction) has stabilizers of the form $\{1\}$ or $\CC^{\star}$. 
Then  $$ \tilde{X}^{\TT}\simeq \tilde{X}_{d}^{\CC^{\star}}\simeq Y$$
and we have the isomorphisms
$$IH^j(\tilde{X}; \QQ) \simeq \mathbb{H}^{j}(\tilde{X}_d, \iota^{\star}IC_{\tilde{X}_d}[-\dim\, \tilde{X}_d]),$$
where $\iota : \tilde{X}^{\CC^{\star}}_{d}\hookrightarrow \tilde{X}_{d}$ is the inclusion.  
\end{lemma}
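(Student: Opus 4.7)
The first isomorphism is the content of \cite[Lemma 6.5]{DL91}, and the plan for proving it is to exhibit a deformation retraction of $X$ onto $\{x\}$. Using \cite[Proposition A2]{Bri99}, I would choose a one-parameter subgroup $\lambda:\CC^\star\to \TT$ such that for every $y\in X$ the orbit map $t\mapsto \lambda(t)\cdot y$ extends to a morphism $\CC\to X$ sending $0$ to $x$. Assembling these into a continuous contraction $H:\CC\times X\to X$ compatible with the natural stratification, and using that $IC_X$ is constructible, one obtains $\mathbb{H}^j(X, IC_X)\simeq \mathcal{H}^j((IC_X)_x)$; shifting by $-\dim\,X$ gives the first assertion.

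For the ``furthermore'' part, the first step is to identify $\tilde{X}^{\TT}$ and $\tilde{X}_d^{\CC^\star}$ with $Y$. Since $\sigma$ is full-dimensional, the degree-zero component of $\A(\D)=\bigoplus_{m\in \sigma^\vee\cap M}\O_Y(\D(m))$ is $\O_Y$; the surjection $\A(\D)\twoheadrightarrow \O_Y$ then defines a closed $\TT$-equivariant embedding $Y\hookrightarrow \tilde{X}$ onto the $\TT$-fixed locus (in each affine chart the fixed ideal is generated by the elements of nonzero degree). Because $u$ lies in the relative interior of $\sigma$, we have $\langle m, u\rangle>0$ for all nonzero $m\in\sigma^\vee$, so the $\CC^\star$-action via $\lambda^u$ has the same fixed locus $Y$. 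By choosing $d\in\ZZ_{>0}$ divisible by the orders of the finitely many finite component groups of stabilizers, I can arrange that the induced $\CC^\star/\mu_d(\CC)\simeq \CC^\star$-action on $\tilde{X}_d$ has only stabilizers $\{1\}$ or $\CC^\star$, so that $\tilde{X}_d^{\CC^\star}=Y$.

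The cohomological identity then follows in two steps. First, by Kirwan's lemma \cite[Lemma 2.12]{Kir86} (as used in the proof of Theorem \ref{t-main}(i)), one has $IH^j(\tilde{X};\QQ)^{\mu_d(\CC)}\simeq IH^j(\tilde{X}_d;\QQ)$, and since $\mu_d(\CC)\subseteq \CC^\star$ sits in a connected group acting on $\tilde{X}$, the induced action on $IH^*(\tilde{X};\QQ)$ is trivial, giving $IH^j(\tilde{X};\QQ)\simeq IH^j(\tilde{X}_d;\QQ)$. Second, I would exhibit a deformation retraction $\tilde{X}_d\to Y$ from the flow $z\mapsto \lim_{t\to 0}\lambda^u(t)\cdot z$: on each affine chart $\spec\,A(C,\D')$ with $C\subseteq Y$ open affine, the positivity of $\langle m, u\rangle$ for $m\neq 0$ makes this limit well-defined and contained in $C$, and the local retractions glue by equivariance. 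Localization to the fixed locus then yields $\mathbb{H}^j(\tilde{X}_d, IC_{\tilde{X}_d})\simeq \mathbb{H}^j(Y,\iota^\star IC_{\tilde{X}_d})$; shifting by $-\dim\,\tilde{X}_d$ gives the claimed formula.

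The hardest step will be verifying that the local $\CC^\star$-retractions glue into a stratified deformation retraction of $\tilde{X}_d$ onto $Y$ that behaves correctly with respect to the intersection cohomology sheaf at the level of hypercohomology. The hypothesis that the $\CC^\star$-stabilizers on $\tilde{X}_d$ are either $\{1\}$ or $\CC^\star$, guaranteed by the choice of $d$, is precisely what prevents branching of orbits near $Y$ and allows proper base change for the $\CC^\star$-equivariant sheaf $IC_{\tilde{X}_d}$ to conclude.
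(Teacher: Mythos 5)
Your overall strategy coincides with the paper's: identify the fixed locus with $Y$ via the nonnegativity of the grading determined by $u$ in the interior of $\sigma$, pass to $\tilde{X}_d$ by Kirwan's lemma together with the triviality of the $\mu_d(\CC)$-action on $IH^{\star}$ (which is exactly what the paper does), and then localize the hypercohomology of $IC_{\tilde{X}_d}$ to the attracting fixed locus using the $\CC^{\star}$-flow. Your verification that $\tilde{X}^{\TT}=\tilde{X}_d^{\CC^{\star}}=Y$ (degree-zero part of $\A(\D)$ equals $\O_Y$, and $\langle m,u\rangle>0$ for $0\neq m\in\sigma^{\vee}$) is correct and is more explicit than what the paper writes.

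The one step that is asserted rather than proved is precisely the crux: ``Localization to the fixed locus then yields $\mathbb{H}^j(\tilde{X}_d, IC_{\tilde{X}_d})\simeq \mathbb{H}^j(Y,\iota^{\star}IC_{\tilde{X}_d})$.'' A topological deformation retraction of $\tilde{X}_d$ onto $Y$ does not by itself give this isomorphism for a non-constant constructible complex: homotopy invariance of hypercohomology fails in general, and one needs either a stratum-preserving conical retraction in the sense of stratification theory or, better, the algebraic structure of the contraction. You correctly flag this as the hard step and gesture at equivariance and proper base change, but you do not say how these are to be combined. The paper closes the gap as follows. Writing $\tilde{X}_0:=\tilde{X}_d\setminus Y$ with open inclusion $j$, the attraction triangle $j_!j^{\star}IC\to IC\to\iota_{\star}\iota^{\star}IC\to$ reduces the claim to the vanishing $\mathbb{H}^{\star}(\tilde{X}_d, j_!(IC_{\tilde{X}_0}))=0$. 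This is proved by extending the action to $h:\AA^1_{\CC}\times\tilde{X}_d\to\tilde{X}_d$ with $h^{-1}(Y)=(\{0\}\times\tilde{X}_d)\cup(\AA^1_{\CC}\times Y)$, noting that the section $g(x)=(1,x)$ splits $h$ so that $\mathbb{H}^{\star}$ of $j_!(IC_{\tilde{X}_0})$ injects into $\mathbb{H}^{\star}$ of $h^{\star}j_!(IC_{\tilde{X}_0})$, then using proper base change to identify $h^{\star}j_!\simeq a_!h_0^{\star}$, $\CC^{\star}$-equivariance of $IC_{\tilde{X}_0}$ to write $h_0^{\star}IC_{\tilde{X}_0}\simeq\QQ_{\CC^{\star}}\boxtimes IC_{\tilde{X}_0}$, and finally K\"unneth together with $\mathbb{H}^{\star}(\AA^1_{\CC}, j^{\sharp}_!\QQ_{\CC^{\star}})=0$. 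If you replace your appeal to a deformation retraction by this adjunction-plus-base-change argument (or by an explicit citation of a contraction lemma of this type), your proof is complete; for the first assertion the paper simply cites \cite[Lemma 4.2]{CMM15}, whose proof is the argument you sketch.
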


\begin{proof}
 The first assertion is a direct consequence of \cite[Lemma 4.2]{CMM15}, which is an adaptation of \cite[Lemma 6.5]{DL91} for complex numbers. For the second assertion, we proceed as follows.
In view of  \cite[Lemma 2.12]{Kir86}, we may write $\tilde{X}$ for $\tilde{X}_d$ where $d\in \ZZ_{>0}$  is chosen as in
the statement.  By assumption, the action $\CC^{\star}\times \tilde{X}\rightarrow \tilde{X}$ extends to a morphism
$h: \AA^{1}_{\CC}\times \tilde{X}\rightarrow \tilde{X}$ such that 
$$h^{-1}( \tilde{X}^{\CC^{\star}}) = (\{0\}\times \tilde{X})\cup (\AA^{1}_{\CC}\times  \tilde{X}^{\CC^{\star}})$$
and the morphism
$$\CC^{\star}\times \tilde{X}_{0}\rightarrow \CC^{\star}\times \tilde{X}_{0}, \, (t, z)\mapsto (t, t\cdot z)$$
is an automorphism, where $ \tilde{X}_{0}$ is the set $\tilde{X}\setminus \tilde{X}^{\CC^{\star}}$. Considering the inclusion
$j: \tilde{X}_{0}\rightarrow \tilde{X}$, by adjunction, we only need to show that 
$$\mathbb{H}^{\star} (\tilde{X}, j_{!}(IC_{\tilde{X}_{0}})) = 0.$$
Let $g:\tilde{X}\rightarrow \AA^{1}_{\CC}\times \tilde{X}$ be defined as $g(x) = (1, x)$ so that $h\circ g = {\rm id}_{\tilde{X}}$ and therefore the composition
$$\mathbb{H}^{\star}(\tilde{X}, j_{!}(IC_{\tilde{X}_{0}}))\rightarrow \mathbb{H}^{\star}( \AA^{1}_{\CC}\times\tilde{X}, h^{\star}j_{!}(IC_{\tilde{X}_{0}}))\rightarrow \mathbb{H}^{\star}(\tilde{X}, g^{\star}h^{\star}j_{!}(IC_{\tilde{X}_{0}}))$$
is an isomorphism. Hence it suffices to have that  $\mathbb{H}^{\star}(\tilde{X}, h^{\star}j_{!}(IC_{\tilde{X}_{0}})) = 0$.
Let $a: \CC^{\star}\times \tilde{X}_{0}\hookrightarrow \AA_{\CC}^{1}\times \tilde{X}$ be the inclusion.
Note that we have a Cartesian commutative diagram 
 $$\xymatrix{
     \CC^{\star}\times \tilde{X}_{0}  \,\,\ar[r]^{h_{0}} \ar[d] ^{a} & \tilde{X}_{0} \ar[d]^{j}\\     \AA^{1}_{\CC}\times\tilde{X} \,\, \ar[r]^{h} & \tilde{X}},$$  
where $h_{0}: \CC^{\star}\times \tilde{X}_{0}\rightarrow \tilde{X}_{0}$ is induced by the $\CC^{\star}$-action.
So by base change (see \cite[Theorem 2.3.26]{Dim04}) we have $h^{\star}j_{!}\simeq a_{!}h_{0}^{\star}$. Moreover, as $\CC^{\star}$ acts on the 
complex of sheaves $IC_{\tilde{X}_{0}}$, we get that 
$$ h^{\star}_{0}(IC_{\tilde{X}_{0}})\simeq  {\rm proj}_{2}^{\star}(IC_{\tilde{X}_{0}})\simeq IC_{\CC^{\star}\times\tilde{X}_{0}}[-1]\simeq \QQ_{\CC^{\star}}\boxtimes IC_{\tilde{X}_{0}}.$$
Therefore it follows that 
$$h^{\star}j_{!}(IC_{\tilde{X}_{0}})\simeq a_{!}h_{0}^{\star}(IC_{\tilde{X}_{0}})\simeq  a_{!}(\QQ_{\CC^{\star}}\boxtimes IC_{\tilde{X}_{0}}))\simeq j^{\sharp}_{!}(\QQ_{\CC^{\star}})\boxtimes j_{!}(IC_{\tilde{X}_{0}}),$$
where $j^{\sharp}:\CC^{\star}\hookrightarrow \AA^{1}_{\CC}$ is the inclusion. Since $\mathbb{H}^{\star}(\AA_{\CC}^{1}, j^{\sharp}_{!}(\QQ_{\CC^{\star}})) = 0$, we conclude, using the K\"unneth's formula that  $\mathbb{H}^{\star}(\tilde{X}, h^{\star}j_{!}(IC_{\tilde{X}_{0}})) = 0$, as required.
\end{proof} 
\begin{example}
Let us illustrate Lemma \ref{l-localaff} with a concrete example. We consider the blow-up $\tilde{X}$ of $\AA^{2}_{\CC} =  \CC^2$ at the origin. In terms of polyhedral divisors, we are looking at $\D$ over $\P^{1}_{\CC}$, the lattice $N$ is $\ZZ$, and we have the conditions $$\D = \sum_{y\in \P^{1}_{\CC}}\D_{y}\cdot [y]\text{ with }\D_y = \QQ_{\geq 0}\text{ if }y\neq 0\text{ and }\D_0 =  \{1\} + \QQ_{\geq 0}.$$ The contraction map $\pi: \tilde{X}\rightarrow X$ for $\D$ is the blow-up map. If $V\simeq \P^{1}_{\CC}$ is the exceptional locus, then the pair $(V, \tilde{X})$ is locally isomorphic for the Zariski topology to the pair $(\P^{1}_{\CC}\times\{0\}, \P^{1}_{\CC}\times \AA^{1}_{\CC})$ and therefore (compare with \cite[5.3.1]{DL91}), we have
$IC_{\P^{1}_{\CC}}\simeq IC_{V}\simeq IC_{\tilde{X}}[-1]_{|V}$. Note that $$\tilde{X}^{\CC^{\star}} = V.$$ So by Lemma  \ref{l-localaff}, we get from the equalities
$$\mathbb{H}^i(V, (IC_{\tilde{X}})_{|V}) =  \mathbb{H}^{i+1}(\P^{1}_{\CC},IC_{\P^{1}_{\CC}}),$$ the following expressions
$$H^{j}(\tilde{X}; \QQ)  =   IH^{j}(\tilde{X}; \QQ)  \simeq \mathbb{H}^{j-2}(V, (IC_{\tilde{X}})_{|V})\simeq \mathbb{H}^{j-1}(\P^{1}_{\CC},IC_{\P^{1}_{\CC}})\simeq H^{j}(\P^{1}_{\CC}; \QQ),$$ 
which is the excepted result since $\tilde{X}$ is the total space of the line bundle $\mathcal{O}_{\P^{1}_{\CC}}(-1)$.
\end{example}

Let  $X = X(\D)$ be a normal affine $\TT$-variety of complexity one with a unique attractive fixed point. 
This condition means that  $\D$ is defined over a smooth projective curve $Y$ and that the dual of its tail $\sigma^\vee$ is strictly convex. Choose $u\in N$ a primitive element in the relative interior of $\sigma$ and consider the embedding $\lambda^{u}:\CC^\star\hookrightarrow \TT$ attached to it. From this, after dividing by a finite group action, the variety $X(\D)$ becomes an affine cone over a projective $(\CC^{\star})^{n-1}$-variety $X(\EE^{u})$ that we now exhibit (see \cite[Section 5]{AH08}, \cite[Section 4, Remark]{IS11}). Denote by $M = M'\times \ZZ$, $N = N'\times \ZZ$ the decomposition constructed by the torus complement of the image of the embedding $\lambda^u$. For any $v\in N_{\QQ}'\times \QQ$ write $\pi_{1}(v)\in N_{\QQ}'$ and $\pi_{2}(v)\in \QQ$ for the first and the second components, and for every $y\in Y$, consider the map $\theta_{y}: N_{\QQ}'\rightarrow \QQ$ defined via the relation $-\theta_{y}(v) = \min \pi_{2}(\pi_{1}^{-1}(v)\cap \D_{y})$. The divisorial fan $\EE^{u}$ on $(Y, N')$ is the one arising from
the piecewise affine map $$N_{\QQ}'\rightarrow \Ca(Y), v\mapsto \sum_{y\in Y}\theta_{y}(v)\cdot [y].$$ 
In other words, if $\EE^u = \{\D^{u, i}\,|\, i\in I\}$, then the faces of the polyhedra  $\D^{u, i}_{y}$
are loci where the map $\theta_y$ is affine. The affine property on $\D^{u, i}_{y}$ means 
that there are $c_{y, i}\in \QQ$ and $m_{y,i}\in M_{\QQ}'$ such that for 
any $v\in \D^{u, i}_{y}$ we have that
$\theta_{y}(v) = c_{y,i} + \langle m_{y, i}, v\rangle.$ 
More precisely, by \cite[Proposition 5.1]{IV13} the subdivision generated by $\{\D^{u, i}_{y}\,|\, i\in I\}$
is equal to the coarsest polyhedral subdivision containing all the $\pi_{1}(\Delta)$,
where $\Delta$ runs over the faces of $\D_{y}$. Moreover, we add the following rule. Each polyhedral divisor
$\D^{u, i}$ with maximal tail $\sigma_{u, i}$ and with the condition that $ (\theta_{y})_{|\sigma_{u,i}}(0)$ is principal (i.e., the sum $\sum_{y\in Y} c_{y,i}\cdot [y]$ is a
principal divisor) is defined over the whole curve $Y$. This also applies for those polyhedral divisors
having their tail intersecting $\deg\, \D^{u,i}$ (the other being defined over an affine open subsets of $Y$). 
\begin{proposition}\label{t-fixedpointt}
Let $X(\D)$ be an affine $\TT$-variety of complexity one corresponding to
a polyhedral divisor $\D\in \PPDiv(Y, \sigma)$ over a smooth projective curve $Y$. Assume that $X(\D)$ has a unique attractive fixed point and set $n = \dim\, \TT$. Consider, as previously,  
a presentation where $X(\D)/\mu_{d}(\CC)$ is an affine cone over $X(\EE^{u})$ and assume that $n\geq 2$. Then 
$$P_{X(\D)}(t) = \tau_{\leq n}((1-t^2) P_{X(\EE^u)}(t)),$$
where $\tau_{\leq n}$ is the truncation to degrees $\leq n$.
\end{proposition}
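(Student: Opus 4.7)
My strategy is to reduce to the case where $X(\D)$ is an honest affine cone over $X(\EE^u)$ and then apply the classical intersection cohomology formula for affine cones over projective varieties. First, following the argument in the proof of Theorem \ref{t-main}(i) (based on Kirwan's \cite[Lemma 2.12]{Kir86}, the decomposition theorem, and equivariant resolutions), the $\TT$-action on $IH^{\ast}(X(\D); \QQ)$ is trivial, so the quotient maps induce isomorphisms $IH^j(X(\D); \QQ) \simeq IH^j(X(\D)/\mu_d(\CC); \QQ)$ and similarly for $X(\EE^u)$. We may thus assume $d=1$, in which case $X(\D)$ is the affine cone over the projective $(\CC^{\star})^{n-1}$-variety $X(\EE^u)$, with vertex the unique attractive fixed point $x$.

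Second, Lemma \ref{l-localaff} identifies $IH^j(X(\D); \QQ)$ with the local stalk $\mathcal{H}^{j-(n+1)}(IC_{X(\D)})_x$, and the support axiom of $IC$ at the zero-dimensional stratum $\{x\}$ immediately forces this stalk to vanish for $j \geq n+1$, accounting exactly for the truncation $\tau_{\leq n}$. For $j \leq n$, a fundamental system of neighborhoods of $x$ in $X(\D)$ is produced by the contraction under the attractive $\CC^{\star}$-action, and each such neighborhood deformation retracts onto the link $L$ of $x$, a principal $S^1$-bundle over $X(\EE^u)$ coming from the maximal compact subgroup of $\lambda^u(\CC^{\star})$. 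The standard cone formula for intersection cohomology (analogous to the toric version recalled in Section \ref{sec-sec-prem-prem}; see also \cite[Lemma 6.5]{DL91}) then yields $\mathcal{H}^{j-(n+1)}(IC_{X(\D)})_x \simeq IH^j(L; \QQ)$ for $j \leq n$, reducing the task to computing $\dim_{\QQ} IH^j(L; \QQ)$ in that range.

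For this last step, I would apply the Gysin long exact sequence of the $S^1$-bundle $L \to X(\EE^u)$
\begin{equation*}
\cdots \to IH^{j-2}(X(\EE^u); \QQ) \xrightarrow{\cup e} IH^j(X(\EE^u); \QQ) \to IH^j(L; \QQ) \to IH^{j-1}(X(\EE^u); \QQ) \xrightarrow{\cup e} \cdots,
\end{equation*}
where the Euler class $e$ agrees up to a nonzero scalar with the hyperplane class of the projective embedding of $X(\EE^u)$ induced by the cone structure. Hard Lefschetz for the intersection cohomology of the projective variety $X(\EE^u)$ (which holds by \cite{BBD82}, even though $X(\EE^u)$ is singular in general) ensures that $\cup e \colon IH^{j-2}(X(\EE^u); \QQ) \to IH^j(X(\EE^u); \QQ)$ is injective for $j \leq n$, whence the Gysin sequence breaks into short exact sequences and yields
\begin{equation*}
\dim_{\QQ} IH^j(L; \QQ) = \dim_{\QQ} IH^j(X(\EE^u); \QQ) - \dim_{\QQ} IH^{j-2}(X(\EE^u); \QQ),
\end{equation*}
which is precisely the coefficient of $t^j$ in $(1-t^2) P_{X(\EE^u)}(t)$. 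The main obstacle will be the rigorous justification of the cone formula for $IC$ in this complexity-one setting together with the identification of the Euler class of the link bundle with the hyperplane class, but since the algebraic $\CC^{\star}$-contraction provides an honest conelike neighborhood of the vertex, the usual topological arguments carry over.
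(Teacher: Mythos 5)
Your proposal is correct and follows the same route as the paper: both reduce to the honest affine cone over $X(\EE^u)$ via Kirwan's lemma on finite quotients and then identify the global intersection cohomology with the stalk at the vertex via Lemma \ref{l-localaff}. The only difference is that where the paper concludes by citing \cite[Lemma 2.1]{Fie91}, you reprove that lemma from scratch (cone formula for the stalk, Gysin sequence of the link bundle, hard Lefschetz on $IH^{\ast}(X(\EE^u);\QQ)$), which is precisely how the cited result is established, so this is the same argument with the black box opened.
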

\begin{proof}
As $\mu_{d}(\CC)$ acts by restriction of the $\CC^{\star}$-action, the global and local intersection cohomologies are unchanged by the quotient of $\mu_{d}(\CC)$ (see \cite[Lemma 2.12]{Kir86}).
So we may assume that $X(\D)$ is an affine cone over $X(\EE^{u})$. We conclude by combining Lemma  \ref{l-localaff} and \cite[Lemma 2.1]{Fie91}.
\end{proof} 
\begin{example}\label{e-elliptic}\emph{Affine $\CC^{\star}$-surfaces with a unique attractive fixed point.} Let $X = X(\D)$
be a normal affine $\CC^{\star}$-surface with polyhedral divisor $\D$ defined over a smooth projective 
curve $Y$. Using the argument of the proof of  Proposition \ref{t-fixedpointt}, we have that
$$P_{X}(t) = \tau_{\leq 1}((1-t^{2})P_{Y}(t)) = 1 + 2\rho_{g}(Y)t.$$
\end{example}
\begin{example}\emph{Affine $(\CC^{\star})^{2}$-threefolds with a unique attractive fixed point.}\label{e-aff3fold}
Let $X = X(\D)$
be a normal affine $(\CC^{\star})^{2}$-threefold with a unique fixed point, where $\D$
is a polyhedral divisor defined over the smooth projective curve $Y$.
Then the variety $X(\EE^{u})$ is a normal projective $\CC^{\star}$-surface 
and using Example \ref{e-surface2} and Proposition \ref{t-fixedpointt}, we have the formula 
$$P_{X}(t) = (1-r-|E^{u}|)t^{2} + 2\rho_{g}(Y)t + 1 + \sum_{y\in\supp(\tilde{\EE}^{u})}(\delta(\tilde{\EE}^{u}_{y})-3)t^{2},$$
where $X(\tilde{\EE}^{u})$ is the total space of the contraction map of $X(\EE^{u})$, and the set $E^{u}\subseteq X(\EE^{u})$ is the image of the exceptional locus. Note that we always have 
$|E^{u}|\leq 2$, and if $|E^{u}| =2$, then the definition of divisorial fans imposes that one of the $\delta(\tilde{\EE}^{u}_{y})$ is greater than $4$. This ensures that the coefficient in $t^{2}$ of the formula is non-negative.

Let us take a concrete example. Set $\sigma := \QQ_{\geq 0}^{2}\subseteq N_{\QQ} = \QQ^{2}$ and assume that $\D$
is the polyhedral divisor over $Y = \P^{1}_{\CC}$ defined by the conditions
$$\D_{0} = \left(\frac{1}{2},\frac{1}{2} \right) + \sigma,\,\, \D_{\infty}= [(0,1), (1, 0)] + \sigma, \text{ and } \D_{y} = \sigma \text{ for any } y\in \P_{\CC}^{1}\setminus\{0, \infty\}.$$ Then, in this case, the variety $X(\EE^{u})$ for $u = (1,1)$ is obtained by gluing the relative spectra
of the sheaves associated with the polyhedral divisors 
$$\D^{u, 1} = \sum_{y\in \P^{1}_{\CC}\setminus\{0\}}\D^{u, 1}_{y}\cdot [y] \text{ with }\D^{u, 1}_{\infty} = [-1, 1]\text{ and }\D^{u, 1}_{y} = \{0\}\text{  for any }y \in \P^{1}_{\CC}\setminus\{0, \infty\};$$
$$\D^{u, 2} = \sum_{y\in \P^{1}_{\CC}}\D^{u, 2}_{y}\cdot [y] \text{ with } \D^{u,2}_{\infty} = [1, \infty[ \text{ and } \D^{u,2}_{y} = [0, \infty[\text{  for any }y \in \P^{1}_{\CC}\setminus\{\infty\};$$
$$\D^{u, 3} = \sum_{y\in \P^{1}_{\CC}}\D^{u, 3}_{y}\cdot [y] \text{ with }\D^{u, 3}_{\infty} = ]-\infty, -1] \text{ and } \D^{u,3}_{y} = ]-\infty, 0] \text{ for any }y \in \P^{1}_{\CC}\setminus\{0\}.$$
Using our formula, we arrive at $P_{X}(t) =  1 + (\delta(\tilde{\EE}^{u}_{\infty}) - 3)t^{2} = 2t^{2} + 1$, which can be also deduced by remarking that $X = X(\D)$ is isomorphic to
the $3$-dimensional affine toric variety associated with the cone 
$$\QQ_{\geq 0}(1,1,2) + \QQ_{\geq 0}(1,0,-1) + \QQ_{\geq 0}(0,1,-1) + \QQ_{\geq 0}(0,1,0) + \QQ_{\geq 0}(1,0,0)\subseteq \QQ^{3}.$$
\end{example}

\subsection{Case of a single closed orbit contained in any orbit closure}\label{sec-attob}
 In this section, we provide a formula for the intersection cohomology Betti numbers of any normal
affine $\TT$-variety $X$ of complexity one with the condition $\CC[X]^{\TT} = \CC$. These $\TT$-varieties
correspond to proper polyhedral divisors  over smooth projective curves. First, we start with the following definition.

\begin{definition}\label{def-handy} A polyhedral divisor $\D\in \PPDiv(Y, \sigma)$ with $Y$ a smooth projective curve is said to be \emph{decomposable}
if  for any $m\in \sigma^{\vee}\cap M$ the divisor $\D(m)$ is principal whenever $\min_{v\in \deg(\D)}\langle m, v\rangle = 0$.
\end{definition}
The next lemma justifies our terminology of decomposable polyhedral divisors.

\begin{lemma}\label{l-handy}
Let $Y$ be a smooth projective curve and let $\D\in \PPDiv(Y, \sigma)$. Let $O$ be the unique closed orbit of $X(\D)$ which is
 contained in any orbit closure. Assume that $\D$ is decomposable or equivalently that the stabilizers of the points of $O$ are connected.
 Then there exist  a lattice decomposition $N =  N_{0}\oplus N_{1}$, a strictly convex polyhedral cone $\sigma'\subseteq N_{0, \QQ}$
with dimension equal to the rank of $N_{0}$, a  polyhedral divisor $\D'  \in \PPDiv(Y, N, \sigma')$ such that $\D_{y}'\subseteq N_{0, \QQ}$ for any $y\in Y$  and a $\TT$-equivariant isomorphism $A(Y, \D)\simeq A(Y, \D')$ giving rise to an isomorphism of polyhedral divisors $\D\simeq \D'$. This latter means that
there exists a group morphism $\delta: M\rightarrow \CC(Y)^{\star}$,  an isomorphism of lattices $\psi:M\rightarrow M$ sending $\sigma^{\vee}\cap M$ to $(\sigma')^{\vee}\cap M$ and an automorphism $\varphi: Y\rightarrow Y$ such that 
$$\varphi^{\star}(\D(m)) =  \D'(\psi(m)) + {\rm div}(\delta(m))\text{ for any }m\in \sigma^{\vee}\cap M,$$
where $\varphi^{\star}$ is the corresponding pullback of $\QQ$-Cartier divisors. 
\end{lemma}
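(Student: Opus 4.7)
The plan is to exploit the characterization of decomposability from Corollary \ref{cor-self} to show that, after a canonical translation, each polyhedron $\D_y$ lies in $N_{0,\QQ}$, where $N_0$ is the annihilator of the lineality of $\sigma^{\vee}$. I would begin with the setup of Lemma \ref{l-pointf}: let $M_1 = \{m \in M\,|\, \{m,-m\} \subseteq \sigma^{\vee}\}$ be the lineality lattice of $\sigma^{\vee}$ and $N_0 = \{v \in N\,|\, \langle m, v \rangle = 0 \text{ for all } m \in M_1\}$; choose a complement $N_1$ so that $N = N_0 \oplus N_1$, with dual decomposition $M = M_0 \oplus M_1$ where $M_0$ is the annihilator of $N_1$. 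Since every $m \in M_1$ vanishes on $\sigma$, we obtain $\sigma \subseteq N_{0,\QQ}$, and the strict convexity of $\sigma^{\vee}/M_{1,\QQ}$ in $M_{0,\QQ}$ forces $\sigma$ to be of full dimension in $N_{0,\QQ}$. This already fixes the candidate tail $\sigma' := \sigma$, and the equivalence between connected stabilizers and decomposability is supplied by Corollary \ref{cor-self}.

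The main step is to show that decomposability implies the projection $\pi_1(\D_y)$ of each $\D_y$ onto $N_{1,\QQ}$ is a single point. For any $m \in M_1$ the pairing $\langle m, \cdot \rangle$ vanishes on $\sigma$, so $\D(m) = \sum_y \min_{w \in \pi_1(\D_y)} \langle m, w \rangle \cdot [y]$, and the condition $\min_{v \in \deg(\D)} \langle m, v \rangle = 0$ holds automatically for $m \in M_1$ since $\deg(\D) \subseteq \sigma$. Decomposability thus forces $\D(m)$ to be principal for all $m \in M_1$. From $\D(m) + \D(-m) \leq \D(0) = 0$ (by concavity of the evaluation map) together with the fact that a principal divisor which is $\leq 0$ on a smooth projective curve must vanish, I would deduce $\D(-m) = -\D(m)$, and then concavity in both directions forces $\D|_{M_1}$ to be $\ZZ$-linear. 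Since each coefficient of $\D(m)$ is, as a function of $m \in M_1$, the support function of the polytope $\pi_1(\D_y) \subseteq N_{1,\QQ}$, linearity on the whole of $M_1$ is equivalent to each polytope being a single point; so we get $\pi_1(\D_y) = \{w_y\}$ for some $w_y \in N_{1,\QQ}$.

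With this in hand, I would define $\D'_y := \D_y - w_y$, which lies in $N_{0,\QQ}$ by construction and has tail $\sigma$. The identity $\deg(\D') = \deg(\D) - \sum_y w_y = \deg(\D)$ holds because $\deg(\D) \subseteq \sigma \subseteq N_{0,\QQ}$ forces $\sum_y w_y = 0$; moreover $\D'(m) = \D(m) - \sum_y \langle m, w_y \rangle \cdot [y]$, so both properness conditions are inherited by $\D'$, which thus belongs to $\PPDiv(Y, N, \sigma)$. To construct the isomorphism $\D \simeq \D'$, I would take $\psi = \mathrm{id}_M$ and $\varphi = \mathrm{id}_Y$, and exhibit a group morphism $\delta : M \to \CC(Y)^{\star}$ satisfying ${\rm div}(\delta(m)) = \sum_y \langle m, w_y \rangle \cdot [y]$. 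This right-hand side is the principal divisor $\D(m)$ for $m \in M_1$ by decomposability, and vanishes for $m \in M_0$ since $w_y \in N_{1,\QQ}$; being linear in $m$, it then lifts to a genuine group homomorphism $\delta$, yielding the desired isomorphism of polyhedral divisors.

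The main obstacle is the rigidity argument in the second paragraph: one must upgrade the combinatorial decomposability hypothesis into the geometric statement that each $\pi_1(\D_y)$ collapses to a point. The crucial input is the degree-zero property of principal divisors on a complete curve, which is what converts the a priori one-sided concavity inequality for $\D$ into strict linearity on $M_1$; once this is established, the classical characterization of a polytope by its support function finishes the argument and the remaining verifications are routine.
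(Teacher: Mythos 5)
Your proof is correct, but it follows a genuinely different route from the paper's. The paper disposes of the lemma in one line: it invokes the geometric product decomposition $X\simeq \TT_{O}\times X_{1}$ supplied by Lemma \ref{l-pointf} (the homogeneous fiber space $\TT_{O}\times^{\Gamma_{x}}X_{1}$ becomes a direct product because connected stabilizers make $\Gamma_{x}$ trivial), and then appeals to the uniqueness statement of the Altmann--Hausen classification, \cite[Theorem 8.8]{AH06}, to translate this equivariant isomorphism back into the stated isomorphism of polyhedral divisors. You instead argue entirely on the combinatorial side: decomposability forces $\D(m)$ to be principal for every $m$ in the lineality lattice $M_{1}$ of $\sigma^{\vee}$, superadditivity $\D(m)+\D(-m)\leq \D(0)=0$ together with the degree-zero property of principal divisors on the projective curve $Y$ forces equality coefficientwise, hence each $\langle m,\cdot\rangle$ is constant on $\D_{y}$ and $\pi_{1}(\D_{y})$ collapses to a point $w_{y}$; translating by $w_{y}$ and building $\delta$ from the principal divisors $\D(e_{i})$, $e_{i}$ a basis of $M_{1}$, produces $\D'$ explicitly with $\psi=\mathrm{id}$ and $\varphi=\mathrm{id}$. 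All the steps check out (in particular $\sum_{y}w_{y}=0$ because $\deg(\D)\subseteq\sigma\subseteq N_{0,\QQ}$, and the relevant divisors are integral precisely because decomposability asserts principality at $d=1$). What the paper's argument buys is brevity and the reuse of Lemma \ref{l-pointf}; what yours buys is self-containedness and an explicit, constructive form of the isomorphism, at the cost of redoing by hand the rigidity that \cite[Theorem 8.8]{AH06} encapsulates.
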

\begin{proof}
This follows from the isomorphism $X\simeq \TT_{O}\times X_{1}$ given by Lemma \ref{l-pointf} and the result \cite[Theorem 8.8]{AH06}.
\end{proof} 
\begin{definition}\label{def-pointed} Let $Y$ be a smooth projective curve and let $\D\in \PPDiv(Y, \sigma)$ be a decomposable polyhedral divisor. Even if it is defined up to isomorphism, we denote by $$\bar{\D}\in \PPDiv(Y, N_{0}, \sigma)$$
the induced polyhedral divisor from the decomposition of Lemma \ref{l-handy}
with ambient lattice $N_{0}$ and whose coefficients are equal to the ones 
of $\D'$. The polyhedral divisor $\bar{\D}$ will be called an \emph{associated pointed polyhedral divisor} of $\D$. We also call \emph{corank} of the decomposable polyhedral divisor $\D$ the rank
of the lattice $N_{1}$ in Lemma \ref{l-handy}. 
\end{definition}

\begin{theorem}\label{t-betbetbet}
Let $X = X(\D)$ be an affine $\TT$-variety, where $\D\in \PPDiv(Y, \sigma)$ is a polyhedral divisor over a smooth projective curve $Y$. Let $M_{0}$ be a sublattice of $M$ of finite index such that for any $m\in M_{0}$, the $\QQ$-divisor $\D(m)$ is principal
whenever $$\min_{v\in \deg(\D)}\langle m, v\rangle = 0.$$ Then the polyhedral divisor
$$\mathfrak{D}'\in \PPDiv(Y, {\rm Hom}_{\ZZ}(M_{0}, \ZZ), \sigma),$$ 
which is $\D$ with respect to the lattice $M_{0}$, is decomposable and we have the formula for the Poincar\'e polynomial
$$P_{X}(t) = (1+t)^{s} P_{X(\bar{\D}')}(t),$$
where $\bar{\D}'$ is the associated pointed polyhedral divisor of $\D'$ and $s$ is its corank (note that $P_{X(\bar{\D}')}(t)$
can be computed via Proposition \ref{t-fixedpointt}). Moreover, for any point $x$ lying in the unique closed orbit $O$ of $X$,
the local Poincar\'e polynomial of $X$ at $x$ is $P_{X(\bar{\D}')}(t)$. 
\end{theorem}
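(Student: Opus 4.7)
The plan is to reduce Theorem~\ref{t-betbetbet} to the decomposable case treated by Lemma~\ref{l-handy} and then to an attractive-fixed-point computation via Lemma~\ref{l-localaff}. First, I would verify that $\D'$ is decomposable: by Corollary~\ref{cor-self}, decomposability of a polyhedral divisor is equivalent to the condition that $\D(m)$ be principal whenever $\min_{v\in\deg(\D)}\langle m,v\rangle = 0$, and the hypothesis imposed on $M_0$ is exactly this condition restricted to $M_0$. Applying Lemma~\ref{l-handy} to $\D'$ produces a lattice decomposition ${\rm Hom}(M_0,\ZZ) = N_a \oplus N_b$ with $\rank N_b = s$ and an equivariant identification in which the polyhedral coefficients lie in $N_{a,\QQ}$. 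Expanding the graded algebra $A(Y,\D')$ in the corresponding decomposition of the character lattice factors it as $\CC[\TT_{N_b}]\otimes A(Y,\bar{\D}')$, giving a $\TT_0$-equivariant isomorphism
\begin{equation*}
X(\D') \simeq (\CC^{\star})^{s}\times X(\bar{\D}').
\end{equation*}
The K\"unneth formula for intersection cohomology, together with $P_{(\CC^{\star})^{s}}(t) = (1+t)^{s}$, then yields $P_{X(\D')}(t) = (1+t)^{s}P_{X(\bar{\D}')}(t)$.

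Next, I would compare $P_X(t)$ with $P_{X(\D')}(t)$. The finite index inclusion $M_0\subseteq M$ produces a finite subgroup $G = {\rm Hom}(M/M_0,\CC^{\star})\subseteq \TT$, and the subring of $G$-invariants in $A(Y,\D)$ is precisely $A(Y,\D')$; hence $X(\D') = X/G$. By \cite[Lemma~2.12]{Kir86} we have $IH^{j}(X(\D');\QQ)\simeq IH^{j}(X;\QQ)^{G}$, and since $G$ sits inside the connected group $\TT$, whose action on rational intersection cohomology is trivial (by the equivariant resolution plus decomposition theorem argument carried out in the proof of Theorem~\ref{t-main}(i)), these invariants coincide with the whole space. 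Therefore $P_X(t) = P_{X(\D')}(t) = (1+t)^{s}P_{X(\bar{\D}')}(t)$, which is the first assertion.

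For the local Poincar\'e polynomial at a point $x$ of the closed orbit $O$, I would proceed in two sub-steps. Since the tail of $\bar{\D}'$ is full-dimensional in its ambient lattice, $X(\bar{\D}')$ has a unique attractive fixed point $x_0$, and the first part of Lemma~\ref{l-localaff} identifies its local Poincar\'e polynomial at $x_0$ with the global $P_{X(\bar{\D}')}(t)$. Via the smooth product $X(\D')\simeq (\CC^{\star})^{s}\times X(\bar{\D}')$, the complex $IC_{X(\D')}$ is a shifted pullback of $IC_{X(\bar{\D}')}$ under the second projection, so the local Poincar\'e polynomial of $X(\D')$ at any point $(g,x_0)$ also equals $P_{X(\bar{\D}')}(t)$. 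It then remains to transfer this to the local Poincar\'e polynomial of $X$ at $x$ via the finite quotient $X\to X/G = X(\D')$.

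The main obstacle will be precisely this last point: showing that the finite quotient by $G$ preserves the local Poincar\'e polynomial at closed-orbit points. My plan to overcome it is to combine Lemma~\ref{l-pointf}, which writes $X$ as a twisted product $\TT_O\times^{\Gamma_x}X_1$ with $X_1$ an affine $\TT$-variety having an attractive fixed point, with the observation that on $\TT_O\times X_1$ the $\Gamma_x$-action is free in a neighborhood of the fiber over $(e,x_0)$. The quotient map is then \'etale there, and the stalk of $IC_X$ at $x$ can be computed via K\"unneth on $\TT_O\times X_1$ and the first part of Lemma~\ref{l-localaff} applied to $X_1$; a further comparison of the same finite-quotient type as in the second paragraph then identifies the result with $P_{X(\bar{\D}')}(t)$.
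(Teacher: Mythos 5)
Your proposal is correct and follows essentially the same route as the paper: reduce to the sublattice $M_{0}$ via the finite quotient $G={\rm Hom}(M/M_{0},\CC^{\star})$ and Kirwan's lemma (with triviality of the $G$-action on $IH$ because $G$ sits in the connected torus), obtain the product decomposition $X/G\simeq(\CC^{\star})^{s}\times X(\bar{\D}')$ from Lemma \ref{l-pointf} (equivalently Lemma \ref{l-handy}), and conclude by K\"unneth and Lemma \ref{l-localaff}. The only difference is that you spell out the local statement (via the \'etale model $\TT_{O}\times X_{1}\rightarrow X$), which the paper dismisses as straightforward; your elaboration is consistent with the intended argument.
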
 
\begin{proof}
As in the proof of Theorem \ref{t-main}, the local and global intersection cohomology Betti numbers do not change if we substitute
$M$ by $M_{0}$. From Lemma \ref{l-pointf}, we therefore have $$X/G\simeq (\CC^{\star})^{s}\times X(\bar{\D}'), \text{ where }G= {\rm Hom}(M/M_{0}, \CC^{\star})$$ and thus by the Kunneth's formula $P_{X}(t) = P_{X/G}(t) = (1+t)^{s}P_{X(\bar{\D}')}(t).$
The other assertion for the local intersection cohomology is straightforward.
\end{proof}
\begin{example}
We consider the polyhedral divisor $\D$ over $\P_{\CC}^{1}$ of Example \ref{conterex1}  with non-trivial coefficients $\D_{0} =  (1, \frac{1}{2}) + \sigma$, $\D_{\infty} =  (0, -\frac{1}{2})+\sigma$ and tail $\sigma =  \QQ_{\geq 0}\times \{0\}$, and the sublattice
$$M_{0} =  \{(m,n)\in \ZZ^{2}\, |\, n\in 2\ZZ\}\subseteq M = \ZZ^{2}.$$
Substituting $M$ by $M_{0}$, we get a decomposable polyhedral divisor $\D'$ such that we can take $\bar{\D}'$ over $\P_{\CC}^{1}$ with the following conditions. The tail of $\bar{\D}'$ is $\QQ_{\geq 0}$ and $\bar{\D}'$ has only one non-trivial coefficient which is the coefficient $\{1\}+\QQ_{\geq 0}$ at the origin.
Therefore $X(\bar{\D}')$ is isomorphic to the complex plane $\AA^{2}_{\CC}$ and from Theorem \ref{t-betbetbet}
we get $$P_{X(\D)}(t) =  (1+t) P_{\AA^{2}_{\CC}}(t)  =  1+t.$$ 
\end{example}

\subsection{Computing with $g$-invariants}\label{sec-g-inv}
We are working with the normal affine $\TT$-variety $\tilde{X}(\D)$ 
equal to the relative spectrum
of the sheaf of $\mathcal{O}_{Y}$-algebras $$\mathcal{A}= \bigoplus_{m\in\sigma^{\vee}\cap M}\mathcal{O}_{Y}(\D(m)),$$
where $\D$ is a polyhedral divisor over a smooth projective curve $Y$  having full-dimensional tail.
Our aim is now to determine the intersection cohomology Betti numbers
of $\tilde{X}(\D)$ in terms of $g$-invariants of certain polyhedral cones (see Theorem \ref{g-vectorth} for a precise statement). 

For every algebraic variety $S$ over $\CC$,
we will let $\mathcal{IC}_{S} := IC_{S}[-\dim\, S]$ so that $$IH^{j}(S;\QQ)= \mathbb{H}^{j}(S, \mathcal{IC}_{S})$$ for any index $j$ such that $0\leq j\leq 2\, \dim\, S$. We start by recalling a classical result.
\begin{lemma}\cite[Sections 4,6]{BM99}\label{l-BM99}\label{ll-BM99}
Let $\sigma\subseteq N_{\QQ}$ be a full-dimensional strictly convex  polyhedral cone
and consider the associated toric variety $X_{\sigma}$. Let $V(\tau)$ be the orbit closure of $X_{\sigma}$
corresponding to a face $\tau$ of $\sigma$.
Then
$$i_{\tau}^{\star}\mathcal{IC}_{X_{\sigma}}\simeq \bigoplus_{\tau \prec \beta\prec \sigma}\bigoplus_{j\geq 0}(i_{\beta})_{\star}\mathcal{IC}_{V(\beta)}^{\oplus g_{j}(\beta, \tau)}[-2j],$$
where $i_{\beta}: V(\beta)\rightarrow V(\tau)$ denotes the natural inclusion. Here the $ g_{j}(\beta, \tau)$'s are the relative $g$-invariants associated with the pair $(\beta, \tau)$ (see Section \ref{sec-sec-prem-prem}). 
\end{lemma}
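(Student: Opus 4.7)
The plan is to combine two ingredients from \cite{BM99}: a semi-simplicity (direct sum decomposition) statement for torus-equivariant constructible complexes on toric varieties built from $\mathcal{IC}_{X_{\sigma}}$, and an identification of the resulting stalks with (relative) $g$-polynomials.

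First I would observe that $i_{\tau}^{\star}\mathcal{IC}_{X_{\sigma}}$ is constructible with respect to the orbit stratification $V(\tau)=\bigsqcup_{\beta\succeq\tau}O(\beta)$, and that by $\TT$-equivariance each cohomology sheaf of $i_{\tau}^{\star}\mathcal{IC}_{X_{\sigma}}$ is locally constant, hence constant, on every orbit $O(\beta)$. The crucial non-formal input is Section 4 of \cite{BM99}: via the combinatorial description of equivariant intersection cohomology on toric varieties (``sheaves on fans''), any equivariantly constructible complex built from $\mathcal{IC}_{X_{\sigma}}$ by standard functors decomposes as a direct sum of shifts of IC extensions $(i_{\beta})_{\star}\mathcal{IC}_{V(\beta)}$. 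This reduces the statement to exhibiting non-negative integers $s_{j,\beta}$ with
$$i_{\tau}^{\star}\mathcal{IC}_{X_{\sigma}}\simeq\bigoplus_{\tau\preceq\beta\preceq\sigma}\bigoplus_{j\geq 0}(i_{\beta})_{\star}\mathcal{IC}_{V(\beta)}^{\oplus s_{j,\beta}}[-2j],$$
and identifying $s_{j,\beta}$ with $g_{j}(\beta,\tau)$.

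To pin down the multiplicities I would compute stalks on both sides. At a point $x\in O(\beta)$ with $\beta\succeq\tau$, the transverse slice description of toric varieties gives a $\TT$-equivariant analytic neighborhood of $x$ in $X_{\sigma}$ isomorphic to $(\CC^{\star})^{n-\dim\beta}\times W_{\beta}$, where $W_{\beta}$ is the affine cone over the projective toric variety $Z_{\beta}$ associated with the polytope $Q_{\beta}$. Combined with the Stanley--Denef--Loeser--Fieseler computation (Section 6 of \cite{BM99}) of the local Poincar\'e polynomial of $W_{\beta}$ at its cone point as $g(Q_{\beta};t^{2})$, this determines the generating function of the dimensions of the stalks of $i_{\tau}^{\star}\mathcal{IC}_{X_{\sigma}}$ at $x$. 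Comparing against the stalk of the candidate decomposition and using the defining recursion
$$g(\sigma;t^{2})=\sum_{\tau\preceq\beta\preceq\sigma}g(\beta,\tau;t^{2})\,g(\sigma/\beta;t^{2})$$
of the relative $g$-invariants, an induction on $\dim\sigma-\dim\tau$ forces $s_{j,\beta}=g_{j}(\beta,\tau)$, as required.

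The main obstacle is the semi-simplicity step: $X_{\sigma}$ is not projective, so the usual decomposition theorem of Beilinson--Bernstein--Deligne does not apply directly; one genuinely needs the equivariant-sheaf technology developed in Section 4 of \cite{BM99} to produce a splitting into IC extensions of orbit closures. Once this decomposition is granted, the extraction of the multiplicities is an essentially formal comparison of Poincar\'e series organised by the face poset of $\sigma$.
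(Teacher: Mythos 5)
The paper gives no independent proof of this lemma---it is imported verbatim from \cite[Sections 4 and 6]{BM99}---and your sketch is an accurate outline of how Braden--MacPherson establish it: purity of the restriction of $\mathcal{IC}_{X_{\sigma}}$ to orbit closures (their Section 4) yields the direct-sum decomposition into even shifts of the $(i_{\beta})_{\star}\mathcal{IC}_{V(\beta)}$ with constant coefficients, and the stalkwise comparison of Poincar\'e series at each orbit $O(\beta)$, organised by the convolution identity defining the relative $g$-polynomials, pins down the multiplicities by induction on $\dim\beta-\dim\tau$. The only loose point is the claim that \emph{any} equivariantly constructible complex built from $\mathcal{IC}_{X_{\sigma}}$ by standard functors decomposes---purity is preserved by $i^{\star}$ and $i^{!}$ for closed unions of orbits and by suitable proper toric pushforwards, not by arbitrary functors---but since your argument only uses $i_{\tau}^{\star}$ this does not affect its correctness.
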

Our idea is to generalize Lemma \ref{ll-BM99} in the context of complexity-one torus actions. 
We say that a constructible complex of sheaves $\mathcal{F}\in D^{b}_{c}(Z)$ on an algebraic variety $Z$ is \emph{pure} if there is an isomorphism $\mathcal{F}\simeq \bigoplus_{\alpha}(\iota_{\alpha})_{\star}\mathcal{IC}_{V_{\alpha}}(\mathscr{L}_{\alpha})[d_{\alpha}]$, where
$V_{\alpha}$ is an irreducible Zariski closed subvariety of $Z$, $\mathscr{L}_{\alpha}$ is a simple local system defined on a Zariski dense open subset of
the regular locus of $V_{\alpha}$, $d_{\alpha}\in \ZZ$, and $\iota_{\alpha}: V_{\alpha}\rightarrow Z$ stands for the natural inclusion. 
\begin{lemma}\label{l-pure}
Set $\Gamma := \tilde{X}(\D)^{\TT}$ and let $\iota:\Gamma\rightarrow \tilde{X}(\D)$ be
the inclusion. Then $\iota^{\star}\mathcal{IC}_{\tilde{X}(\D)}$ is pure.
\end{lemma}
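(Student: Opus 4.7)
The strategy is to combine the attractive $\CC^{\star}$-contraction provided by the full-dimensionality of the tail cone $\sigma$ with Braden's hyperbolic localization theorem and the Braden--MacPherson description (Lemma \ref{l-BM99}) of intersection cohomology stalks for affine toric varieties.

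First, I would identify $\Gamma$ and set up the contraction. Since $\sigma$ is full-dimensional, each fiber of the quotient map $\gamma : \tilde{X}(\D) \to Y$ is an affine toric variety (possibly up to a finite group action) with a unique $\TT$-fixed point. By Lemma \ref{l-localaff}, after replacing $\tilde{X}(\D)$ with its quotient by a suitable finite subgroup $\mu_{d}(\CC) \subseteq \CC^{\star}$, which does not affect intersection cohomology by Kirwan's lemma, we may assume $\Gamma \simeq Y$. Choosing $u \in N$ in the relative interior of $\sigma$, the one-parameter subgroup $\lambda^{u} : \CC^{\star} \to \TT$ acts on $\tilde{X}(\D)$ with strictly positive weights at every point of $\Gamma$, so the action extends to $h : \AA^{1}_{\CC} \times \tilde{X}(\D) \to \tilde{X}(\D)$ and defines an algebraic retraction $p : \tilde{X}(\D) \to \Gamma$, $p(x) = h(0,x)$.

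Second, I would invoke Braden's hyperbolic localization theorem. Since $\mathcal{IC}_{\tilde{X}(\D)}$ carries a natural $\TT$-equivariant structure (hence a $\CC^{\star}$-equivariant one), Braden's theorem yields a canonical isomorphism
\[
\iota^{\star}\mathcal{IC}_{\tilde{X}(\D)} \simeq p_{\star}\mathcal{IC}_{\tilde{X}(\D)},
\]
possibly after reversing the $\CC^{\star}$-action to interchange the roles of the $\star$- and $!$-functors. This reduces the purity of $\iota^{\star}\mathcal{IC}_{\tilde{X}(\D)}$ to that of the pushforward $p_{\star}\mathcal{IC}_{\tilde{X}(\D)}$ on $\Gamma$.

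Third, I would prove purity of $p_{\star}\mathcal{IC}_{\tilde{X}(\D)}$ by local toric analysis. Over a small \'etale (or Euclidean-analytic) neighborhood of any $y_{0}\in Y\simeq\Gamma$, the theory of polyhedral divisors realizes $\tilde{X}(\D)$ as an open piece of the affine toric variety attached to the Cayley cone $C_{y_{0}}(\D) \subseteq N_{\QQ}\times\QQ$; under this identification $p$ corresponds to the toric retraction onto the appropriate $\TT$-fixed stratum. Lemma \ref{l-BM99} then supplies a pure decomposition of the stalk of $p_{\star}\mathcal{IC}_{\tilde{X}(\D)}$ at $y_{0}$ in terms of the faces of $C_{y_{0}}(\D)$ and the corresponding relative $g$-invariants. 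Over the generic open part of $Y$, the Cayley cone degenerates to $\sigma\times\QQ_{\geq 0}$ and the local decomposition reduces to that of $X_{\sigma}$, matching the generic constant-sheaf picture coming from the trivial product structure $Y'\times X_{\sigma}$ on the complement of $\supp(\D)$.

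The main obstacle is the global gluing: one must check that these local, stalkwise toric decompositions patch consistently into a direct sum of shifted IC sheaves on the curve $\Gamma$. Because $\Gamma$ is one-dimensional, a constructible complex on it is determined by its generic shape together with its stalk data at finitely many special points, so the patching reduces to a compatibility between the generic Braden--MacPherson decomposition of $X_{\sigma}$ and the special decompositions at the points of $\supp(\D)$, controlled in each case by the combinatorics of the Cayley cones.
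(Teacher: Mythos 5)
Your first two steps are sound: the attracting retraction $p:\tilde{X}(\D)\rightarrow\Gamma$ is well defined because $\sigma$ is full-dimensional, hyperbolic localization (with the action reversed) does give $\iota^{\star}\mathcal{IC}_{\tilde{X}(\D)}\simeq p_{\star}\mathcal{IC}_{\tilde{X}(\D)}$, and your \'etale-local identification of $\tilde{X}(\D)$ near $y_{0}\in\supp(\D)$ with an open piece of the toric variety of the Cayley cone $C_{y_{0}}(\D)$ is exactly what the paper does in Lemma \ref{l-restr-fix} (via Lemma \ref{lem-etet}) --- but there it is used to \emph{identify} the summands \emph{after} purity is known. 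The gap is your third step. Purity in the sense of this paper is a global semisimplicity statement, and it cannot be extracted from stalks: your claim that a constructible complex on a curve is determined by its generic restriction together with its stalks at finitely many special points is false. For instance, on $\AA^{1}_{\CC}$ the complexes $j_{\star}\QQ_{\AA^{1}_{\CC}\setminus\{0\}}$ and $\QQ_{\AA^{1}_{\CC}}\oplus\QQ_{\{0\}}[-1]$ have the same generic part and the same stalk at the origin, yet only the second is pure (their costalks $i^{!}$ at $0$ already differ). Even upgrading your local computation to ``$\iota^{\star}\mathcal{IC}_{\tilde{X}(\D)}$ is \'etale-locally pure'' does not close the gap: the obstructions to splitting off the skyscraper summands and to splitting the perverse filtration are classes in global ${\rm Hom}$-groups such as ${\rm Hom}({}^{p}\mathcal{H}^{k'}(\mathcal{F}),{}^{p}\mathcal{H}^{k}(\mathcal{F})[k'-k+1])$, which see the global topology of $Y$ and the global monodromy of the generic local systems and vanish for no local reason. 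Note also that the decomposition theorem cannot be applied to $p$ itself, since the retraction is not proper, so after the hyperbolic-localization step you are left with no general tool that produces purity.

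The paper gets around this by manufacturing a \emph{proper} map to which Theorem \ref{t-bbdtheodec} applies: it constructs a $\TT$-equivariant proper birational modification $p:X(\EE)\rightarrow\tilde{X}(\D)$ by star-subdividing each Cayley cone at $u$, shows using \cite[Proposition 16]{BM99} that the exceptional divisor $Z=p^{-1}(\Gamma)$ has (after dividing by a finite group) an \'etale-local line-bundle neighborhood, whence $j^{\star}\mathcal{IC}_{X(\EE)}\simeq\mathcal{IC}_{Z}$, and then uses proper base change together with the decomposition theorem for $p$ and for $q=p|_{Z}$ to realize $\iota^{\star}\mathcal{IC}_{\tilde{X}(\D)}$ as a direct summand, with complement, of the pure complex $q_{\star}\mathcal{IC}_{Z}$; purity then follows from \cite[Lemma 15]{BM99}. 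If you wish to keep your framework, the correct repair is to invoke the second main theorem of Braden's hyperbolic localization paper --- hyperbolic localization preserves purity of weights --- and then conclude semisimplicity from the decomposition theorem for weight-pure complexes; that would give a genuinely different proof. As written, however, the gluing step does not establish the lemma.
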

Before proving Lemma \ref{l-pure}, one needs some preparations. 
\\

Let $\EE$ be a divisorial fan over $(Y, N)$ and assume that any element of $\EE$ has affine locus. For any
$y\in Y$, we denote by $\Sigma_{y}(\EE)$ the fan of $N_{\QQ}\times \QQ$ generated by the Cayley cones
$$C_{y}(\D) := {\rm Cone} ((\sigma\times\{0\})\cup (\D_{y}\times \{1\})),$$
where $\D$ runs over $\EE$. Let $\phi: U_{y}\rightarrow V_{0}$ be a surjective \'etale morphism such that 
$U_{y}\subseteq Y$ and $V_{0}\subseteq \AA^{1}_{\CC}$ are Zariski open subsets, $y\in U_{y}$, $0\in V_{0}$, $\phi^{-1}(0) = \{y\}$ and $\supp(\EE)\cap U_{y} = \{y\}$. Such a morphism $\phi$ exists since $Y$ is smooth.    
We denote by $\EE(\phi, y)$ (respectively, $\EE(\phi, y)_{\rm tor}$) the divisorial fan over $(U_{y}, N)$ (respectively, $(V_{0}, N)$) generated
by the polyhedral divisors
$$\D_{|U_{y}}:= \sum_{z\in U_{y}}\D_{z}\cdot [z]\text{ respectively, } \D_{\phi}:= \sum_{z\in U_{y}}\D_{z}\cdot [\phi(z)],$$
where $\D$ runs over $\EE$. Note that the $\TT$-variety $X(\EE(\phi, y)_{\rm tor})$ is Zariski open dense inside the toric
variety associated to the fan $\Sigma_{y}(\EE)$. 
\begin{lemma}\label{lem-etet}
We have a $\TT$-equivariant isomorphism
$$ X(\EE(\phi, y)_{\rm tor})\times_{V_{0}}U_{y}\simeq X(\EE(\phi, y))$$
and the natural morphism
$$\eta: X(\EE(\phi, y))\rightarrow X(\EE(\phi, y)_{\rm tor})$$
is an \'etale morphism. 
\end{lemma}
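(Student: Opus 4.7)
My plan is to reduce to the affine case and then track how the étale hypothesis on $\phi$ identifies the two relevant sheaves of graded algebras.

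First I would treat a single polyhedral divisor $\D \in \EE$ with tail $\sigma$, whose restriction $\D_{|U_y}$ over $U_y$ has (by the assumption $\supp(\EE)\cap U_y=\{y\}$) only one possibly non-trivial coefficient, namely $\D_y$ at $y$. Correspondingly $\D_\phi$ on $V_0$ has only one possibly non-trivial coefficient, $\D_y$ at $0$. Writing $h_y(m) := \min_{v\in \D_y}\langle m,v\rangle$, one has $\D_{|U_y}(m) = h_y(m)\cdot[y]$ and $\D_\phi(m) = h_y(m)\cdot[0]$ for every $m\in\sigma^\vee\cap M$. Because $\phi$ is étale and $\phi^{-1}(0)=\{y\}$ set-theoretically (hence scheme-theoretically, by étaleness), the pullback of the prime Cartier divisor $[0]$ on $V_0$ is exactly the prime divisor $[y]$ on $U_y$, and moreover étale pullback commutes with the round-down operation on $\QQ$-divisors supported at a single reduced point. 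Consequently
$$\phi^{\star}\lfloor \D_\phi(m)\rfloor \;=\;\lfloor \D_{|U_y}(m)\rfloor \qquad\text{for all }m\in\sigma^\vee\cap M,$$
and this induces a canonical isomorphism of $M$-graded $\mathcal{O}_{U_y}$-algebras $\phi^{\star}\mathcal{A}(\D_\phi)\simeq \mathcal{A}(\D_{|U_y})$.

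Taking the relative spectrum and using the base-change compatibility $\spec_{U_y}\phi^{\star}\mathcal{A}(\D_\phi)\simeq \spec_{V_0}\mathcal{A}(\D_\phi)\times_{V_0}U_y$, I obtain the desired $\TT$-equivariant isomorphism
$$X(\D_{|U_y})\;\simeq\; X(\D_\phi)\times_{V_0}U_y.$$
The gluing construction of $X(\EE(\phi,y))$ and $X(\EE(\phi,y)_{\rm tor})$ from \cite[Sections 3,4]{AHS08} is compatible with base change (the transition maps on overlaps $\D^i\cap\D^j$ identify in the same way, using that intersections of polyhedral coefficients are preserved), so gluing the affine pieces yields the global isomorphism $X(\EE(\phi,y))\simeq X(\EE(\phi,y)_{\rm tor})\times_{V_0}U_y$. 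Finally, $\eta$ is the projection onto the first factor of this fibre product over $V_0$, which is the base change of the étale morphism $\phi: U_y\to V_0$ along $X(\EE(\phi,y)_{\rm tor})\to V_0$, hence étale.

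The main technical point—and the only non-formal step—is the identification $\phi^{\star}\mathcal{A}(\D_\phi)\simeq \mathcal{A}(\D_{|U_y})$, which requires checking that étaleness of $\phi$ (together with $\phi^{-1}(0)=\{y\}$ and the absence of other points of $\supp(\EE)$ in $U_y$) genuinely preserves the floor function at the single divisorial coefficient. Once this is established, the isomorphism in (i) and the étaleness in (ii) are formal consequences of base change and the gluing construction of $\TT$-varieties from divisorial fans.
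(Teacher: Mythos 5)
Your proof is correct, but it follows a genuinely different route from the paper's. The paper also reduces to the case of a single polyhedral divisor, but then establishes the identification $X(\EE(\phi,y)_{\rm tor})\times_{V_0}U_y\simeq X(\EE(\phi,y))$ by comparing the $\TT$-invariant discrete valuations of the function field that are centered in the two birational models, citing \cite[Lemma 4.4]{LPR19}; the \'etaleness of $\eta$ is then, as in your argument, immediate from invariance of \'etaleness under base change. You instead build the isomorphism explicitly at the level of $M$-graded sheaves of algebras, via $\phi^{\star}\mathcal{A}(\D_\phi)\simeq\mathcal{A}(\D_{|U_y})$, and then apply base change for relative spectra and compatibility of the gluing. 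Your key computation is sound: since $\phi$ is \'etale with $\phi^{-1}(0)=\{y\}$, the pullback $\phi^{\star}[0]$ is the reduced divisor $[y]$ (ramification index one), so $\phi^{\star}\lfloor h_y(m)[0]\rfloor=\lfloor h_y(m)\rfloor[y]=\lfloor h_y(m)[y]\rfloor$ — this is exactly where unramifiedness is needed, and you correctly flag it as the only non-formal step. The trade-off is that your argument is more self-contained and makes the mechanism visible (it would fail for a ramified cover, which the valuation-theoretic argument obscures), while the paper's is shorter by outsourcing the comparison of models to a citation. One small point worth making explicit if you write this up: the identification $\phi^{\star}\mathcal{O}_{V_0}(D)\simeq\mathcal{O}_{U_y}(\phi^{\star}D)$ and its compatibility with the multiplication maps $\mathcal{O}(\D(m))\otimes\mathcal{O}(\D(m'))\to\mathcal{O}(\D(m+m'))$ both follow from viewing all these sheaves as subsheaves of the constant sheaf of rational functions and pulling back along the flat map $\phi$.
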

\begin{proof}
For the first claim, one needs to prove that the divisorial fan structures are the same on both sides. 
We may assume that $\EE(\phi, y)$ and $\EE(\phi, y)_{\rm tor}$ have only one element. Doing
this reduction, one concludes by comparing the discrete $\TT$-invariant valuations on the function field
that are centered in  $X(\EE(\phi, y)_{\rm tor})\times_{V_{0}}U_{y}$ and in $ X(\EE(\phi, y))$ (see \cite[Lemma 4.4]{LPR19}).
The last claim follows from the fact that \'etaleness is invariant under base change.
\end{proof}

\begin{proof}[Proof of Lemma \ref{l-pure}]
We follow the ideas developed in \cite[Sections 5, 6]{BM99} for the toric case
and adapt them for torus actions of complexity one.  Let $u\in N$ be a lattice vector belonging to the relative interior of the tail cone $\sigma$. Then $\CC^{\star}$ acts on $\tilde{X}(\D)$ via the one-parameter subgroup associated with $u$. 
For each $y\in Y$ we consider the fan $$\Sigma_{y} = \Sigma_{y}^{u}\text{ of }N_{\QQ}\times \QQ\text{ with support }C_{y}(\D) := {\rm Cone} ((\sigma\times\{0\})\cup (\D_{y}\times \{1\}))$$ defined as follows. It is spanned by the cones of the form $\tau +\QQ_{\geq 0}u$, where $\tau$ runs over all faces of $C_{y}(\D)$ that does not contained $\sigma$.

Let $\EE$ be a divisorial fan over $(Y, N)$ whose polyhedral divisors are defined over smooth affine curves, and such that for any $y\in Y$ the family $(C_{y}(\D'))_{\D'\in \EE}$ generates
the fan $\Sigma_{y}$. From the combinatorics of divisorial fans,
one may construct a natural $\TT$-equivariant birational proper morphism
$p:X(\EE)\rightarrow \tilde{X}(\D)$, which is an isomorphism over $\tilde{X}(\D)\setminus \Gamma$. Note that $Z:= p^{-1}(\Gamma)$ is a prime divisor of $X(\EE)$.

Let  $X(\EE(\phi, y))\subseteq X(\EE)$ be the Zariski open subset of Lemma \ref{lem-etet}, where by construction 
we have $\Sigma_{y}(\EE)  =  \Sigma_{y}$. Let $\alpha$ be the least common multiple of the lattice indices $[N: N\cap {\rm span}(\tau) + u\ZZ]$,
where $\tau$ runs through all the faces of the fans $\Sigma_{z}$ not containing $\sigma$,
and $z$ runs through all the points of $U_{y}$.
The lattice $$ N + (u/\alpha)\ZZ\subseteq N_{\QQ}$$ gives rise to a torus $\TT'$. We denote 
by $G_{\mu}$ the kernel of the natural map $\TT\rightarrow \TT'$. According to \cite[Proposition 16]{BM99}, the quotient 
 $X(\EE(\phi, y)_{\rm tor})/G_{\mu}$ is a 
line bundle over  $\eta(Z\cap X(\EE(\phi, y))/G_{\mu}$, where $\eta$ is the \'etale morphism
$$\eta: X(\EE(\phi, y))\rightarrow X(\EE(\phi, y)_{\rm tor})$$
of Lemma \ref{lem-etet}. In particular, $Z/G_{\mu}$ identifies with $Z$ and the pair
$(X(\EE)/G_{\mu}, Z)$ is locally isomorphic to $(\AA^{1}_{\CC}\times Z, \{0\}\times Z)$ for the \'etale topology.
Hence we get $$j^{\star}\mathcal{IC}_{X(\EE)}\simeq \mathcal{IC}_{Z}\text{ where } j: Z\rightarrow X(\EE)$$
 is the natural inclusion (compare \cite[5.3.1]{DL91}). Now consider the Cartesian commutative diagram
 $$\xymatrix{
      Z\ar[r]^{j} \ar[d] ^q & X(\EE) \ar[d]^{p}\\
      \Gamma \ar[r]^{\iota} & \tilde{X}(\D)}$$
where $q$ is the restriction of $p$ on the subset $Z$. We follow the proof of \cite[Theorem 10]{BM99}. Since $p$ and $q$ are proper morphisms, by 
base change (see \cite[Theorem 2.3.26]{Dim04}) we have that $$q_{\star} j^{\star}\mathcal{IC}_{X(\EE)}\simeq \iota^{\star}p_{\star}\mathcal{IC}_{X(\EE)}.$$ By the preceding step and using the decomposition theorem (see Theorem \ref{t-bbdtheodec}), the left-hand side of this identity is pure.  
Using again
the decomposition theorem, this time, for the map $p$, we observe that $\iota^{\star}\mathcal{IC}_{\tilde{X}(\D)}$ is a summand of $\iota^{\star}p_{\star}\mathcal{IC}_{X(\EE)}$ (as $p$ is birational) having a complement. Finally, we conclude the proof of the lemma by using \cite[Lemma 15]{BM99}, which implies that
 $\iota^{\star}\mathcal{IC}_{\tilde{X}(\D)}$ is pure.
\end{proof}

\begin{lemma}\label{l-restr-fix}
There is an isomorphism
$$\iota^{\star}\mathcal{IC}_{\tilde{X}(\D)} \simeq \bigoplus_{j\geq 0}\left(\mathcal{IC}_{\Gamma}^{\oplus g_{j}(\sigma)}[-2j]\oplus \bigoplus_{y\in \supp(\D)} (i_{y})_{\star}\mathcal{IC}_{\{y\}}^{\oplus g_{j}(C_{y}(\D), \sigma)}[-2j]\right),$$
where $\supp(\D) = \{y\in Y\,|\, \D_{y} \neq \sigma\}$, $C_{y}(\D) := {\rm Cone} ((\sigma\times \{0\}) \cup (\D_{y}\times \{1\}))$, $\Gamma := \tilde{X}(\D)^{\TT}$, and $i_{y}:\{y\}\rightarrow \Gamma$, $\iota:\Gamma\rightarrow \tilde{X}(\D)$ are the natural inclusions.
\end{lemma}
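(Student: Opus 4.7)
The plan is to combine the purity of Lemma \ref{l-pure} with an \'etale-local reduction to the toric setting, where Lemma \ref{l-BM99} applies directly. Since $\sigma$ is full-dimensional, every fiber of the projection $\tilde{X}(\D)\to Y$ carries a unique attractive $\TT$-fixed point, and these patch into a section identifying $\Gamma\simeq Y$. By Lemma \ref{l-pure} the complex $\iota^\star \mathcal{IC}_{\tilde{X}(\D)}$ is pure, so it decomposes as a direct sum of shifted IC sheaves supported either on $\Gamma$ itself (with semi-simple local system coefficients) or on isolated points of $\Gamma$.

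The next step is to compute stalks of $\iota^\star \mathcal{IC}_{\tilde{X}(\D)}$. I would restrict $\D$ to a small affine neighborhood $U_y\subseteq Y$ of a point $y$ and apply Lemma \ref{lem-etet} to obtain an \'etale morphism $\eta$ from an open subset of $\tilde{X}(\D)$ onto an open subset of the toric variety $X_{C_y(\D)}$. Under $\eta$, the fixed locus $\Gamma$ corresponds \'etale-locally to the closure $V(\sigma\times\{0\})$ of the one-dimensional orbit in $X_{C_y(\D)}$, and $y$ itself corresponds to the unique closed orbit $V(C_y(\D))$. Since $\sigma\times\{0\}$ is a codimension-one face of $C_y(\D)$, the only faces $\beta$ with $\sigma\prec\beta\prec C_y(\D)$ are $\sigma$ and $C_y(\D)$ themselves, and Lemma \ref{l-BM99} yields
$$i^\star_\sigma \mathcal{IC}_{X_{C_y(\D)}}\simeq \bigoplus_{j\geq 0}\left(\mathcal{IC}_{V(\sigma\times\{0\})}^{\oplus g_j(\sigma)}\oplus (i_{V(C_y(\D))})_\star \mathcal{IC}_{V(C_y(\D))}^{\oplus g_j(C_y(\D),\sigma)}\right)[-2j].$$
Pulling back along $\eta$ transports this identification to an \'etale neighborhood of $y$ in $\Gamma$. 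Moreover, for $y\notin\supp(\D)$ one has $C_y(\D)=\sigma\times\QQ_{\geq 0}$, and an elementary pyramid-type computation gives $g_j(C_y(\D),\sigma)=0$, matching the fact that no skyscraper summand appears at such points.

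To finish, I would globalize: the stalk computations identify the constant rank of the local system on $\Gamma$ as $g_j(\sigma)$, and the \'etale-local trivialization coming from the toric model, combined with the connectedness of $\Gamma\simeq Y$ and the semisimple decomposition from purity, should force this local system to be globally trivial. Adding in the skyscraper contributions at $y\in\supp(\D)$ then yields the claimed formula. The main obstacle I anticipate is precisely this last point: upgrading the \'etale-local trivialization of the generic summand into a canonical global trivialization likely requires combining the purity of Lemma \ref{l-pure} with an equivariance argument in the spirit of \cite[Lemma 15]{BM99}, which was already invoked in the proof of that lemma.
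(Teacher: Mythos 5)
Your proposal is correct and takes essentially the same route as the paper: purity from Lemma \ref{l-pure}, the \'etale-local reduction to the toric model $X_{C_{y}(\D)}$ via Lemma \ref{lem-etet}, and Lemma \ref{l-BM99} applied to the codimension-one face $\sigma\times\{0\}\prec C_{y}(\D)$ (so that only the summands indexed by $\sigma$ and $C_{y}(\D)$ survive). The globalization step you flag as the main obstacle is closed in the paper without any extra equivariance argument: since both sides are pure, an isomorphism may be verified over a Zariski open cover of $\Gamma\simeq Y$ (see \cite[Remark 2.5.3]{Dim04}), and the open sets $U_{y}$ provide such a cover.
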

\begin{proof}
Since both sides are pure objects (see Lemma \ref{l-pure}), by \cite[Remark 2.5.3]{Dim04} it suffices to establish the isomorphism over Zariski open subsets that cover $\Gamma$.
We identify $\Gamma$ with the base curve $Y$. Let $y\in Y$. Lemma \ref{lem-etet} provides the
Cartesian commutative diagram 
$$\xymatrix{
      U_{y}\ar[r]^{i_{U_{y}}} \ar[d] ^\phi & X(\D_{|U_{y}}) \ar[d]^{\eta}\\
      V_{0} \ar[r]^{i_{V_{0}}} & X(\D_{\phi})}.$$
Consequently,
$$(\iota^{\star}\mathcal{IC}_{\tilde{X}(\D)})_{|U_{y}} \simeq i_{U_{y}}^{\star}\mathcal{IC}_{X(\D_{|U_{y}})} \simeq i_{U_{y}}^{\star}\eta^{\star}\mathcal{IC}_{X(\D_{\phi})}\simeq \phi^{\star}i_{V_{0}}^{\star}\mathcal{IC}_{X(\D_{\phi})}.$$
Here the second isomorphism is a consequence of the identification $$\eta^{\star}\mathcal{IC}_{X(\D_{|V_{0}}')}\simeq \mathcal{IC}_{X(\D_{|U_{y}})},$$ which is due to the fact that $\eta$ is \'etale (see 
\cite[Section 4.2]{BBD82} and \cite[Lemma 2.4]{AL17} for more details). The last isomorphism is obtained by remarking that we may compose the pull-back derived functors (compare \cite[Remark 2.3.8]{Dim04}).
Using Lemma \ref{l-BM99} and $X(\D_{\phi})$ is embedded into the toric variety  $X_{C_{y}(\D)}$ associated with $C_{y}(\D)$ , it follows that 
$$i_{V_{0}}^{\star}\mathcal{IC}_{X(\D_{\phi})}\simeq \bigoplus_{j\geq 0}\left(\mathcal{IC}_{V_{0}}^{\oplus g_{j}(\sigma)}[-2j]\oplus (i_{0})_{\star}\mathcal{IC}_{\{0\}}^{\oplus g_{j}(C_{y}(\D), \sigma)}[-2j]\right).$$
Note that the Zariski closure of the image of the fixed point locus of $X_{C_{y}(\D)}$ for the $\TT$-action by the \'etale map $\eta$ is exactly the $\TT\times \CC^{\star}$-orbit closure 
associated with the cone $\sigma\subseteq C_{y}(\D)$.
Again applying $\phi^{\star}$, we arrive at
$$(\iota^{\star}\mathcal{IC}_{\tilde{X}(\D)})_{|U_{y}}\simeq (\bigoplus_{j\geq 0}(\mathcal{IC}_{\Gamma}^{\oplus g_{j}(\sigma)}[-2j]\oplus \bigoplus_{y\in \supp(\D)} (i_{y})_{\star}\mathcal{IC}_{\{y\}}^{\oplus g_{j}(C_{y}(\D),\sigma)}[-2j]))_{|U_{y}}.$$ 
Finally, we conclude by observing that we may cover $\Gamma$ by Zariski open subsets of the form $U_{y}$. This finishes the proof of the lemma.
\end{proof}
The following result determines the Poincar\'e polynomials of certain relative spectra
associated to a polyhedral divisor defined over a smooth projective curve.
\begin{theorem}\label{g-vectorth}
Let $\D$ be a polyhedral divisor over the smooth projective curve $Y$ and assume that
the tail $\sigma$ of $\D$ is full-dimensional. Let $\tilde{X}(\D)$ be the relative spectrum 
of the sheaf associated with $\D$. Then
$$P_{\tilde{X}(\D)}(t) = (t^{2} + 2\rho_{g}(Y)t + 1 - r)\cdot g(\sigma; t^{2}) + \sum_{y\in \supp(\D)} g(C_{y}(\D); t^{2}),$$
where $r$  is the cardinality of $\supp(\D)$, $\rho_{g}(Y)$ is the genus of $Y$, and $$C_{y}(\D) := {\rm Cone} ((\sigma\times \{0\}) \cup (\D_{y}\times \{1\}))$$ for any $y\in Y$. Here $g(\sigma; t^{2})$ and $g(C_{y}(\D); t^{2})$ are respectively the $g$-polynomials associated with the cones $\sigma$ and $C_{y}(\D)$ (see Section \ref{sec-sec-prem-prem}).
\end{theorem}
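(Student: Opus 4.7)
The plan is to pass to the $\TT$-fixed locus of $\tilde{X}(\D)$ via Lemma \ref{l-localaff} and then apply the explicit sheaf decomposition from Lemma \ref{l-restr-fix}. More precisely, choosing $u \in N$ in the relative interior of $\sigma$ and passing to a suitable finite cyclic quotient (which does not affect rational intersection cohomology, by \cite[Lemma 2.12]{Kir86}), the second part of Lemma \ref{l-localaff} yields
$$IH^j(\tilde{X}(\D); \QQ) \simeq \mathbb{H}^j(Y, \iota^\star \mathcal{IC}_{\tilde{X}(\D)}),$$
where $\iota: Y \simeq \tilde{X}(\D)^\TT \hookrightarrow \tilde{X}(\D)$. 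Inserting the decomposition of Lemma \ref{l-restr-fix}, the problem reduces to computing hypercohomology of a direct sum of shifted constant sheaves on $Y$ and shifted skyscraper sheaves at the points of $\supp(\D)$.

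Since $Y$ is a smooth projective curve and $\mathcal{IC}_Y = \QQ_Y$, the summand $\mathcal{IC}_Y^{\oplus g_j(\sigma)}[-2j]$ contributes $g_j(\sigma)\, t^{2j} P_Y(t)$ to $P_{\tilde{X}(\D)}(t)$; summing over $j$ this gives $P_Y(t)\, g(\sigma; t^2) = (1 + 2\rho_g(Y) t + t^2)\, g(\sigma; t^2)$. A skyscraper summand $(i_y)_\star \mathcal{IC}_{\{y\}}[-2j]$ contributes $t^{2j}$, so summing over $j$ and $y \in \supp(\D)$ yields $\sum_{y \in \supp(\D)} g(C_y(\D), \sigma; t^2)$. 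Altogether,
$$P_{\tilde{X}(\D)}(t) = P_Y(t)\, g(\sigma; t^2) + \sum_{y \in \supp(\D)} g(C_y(\D), \sigma; t^2). \qquad (\star)$$

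To reach the stated formula it remains to establish the identity $g(C_y(\D), \sigma; t^2) = g(C_y(\D); t^2) - g(\sigma; t^2)$ for each $y \in \supp(\D)$. I will obtain it by computing the stalk of $\iota^\star \mathcal{IC}_{\tilde{X}(\D)}$ at such a $y$ in two ways. On the one hand, by the \'etale local model of Lemma \ref{lem-etet}, a neighborhood of $y$ in $\tilde{X}(\D)$ is \'etale-equivalent to a neighborhood of the $\TT$-fixed point $x_0$ in the full-dimensional affine toric variety $X_{C_y(\D)}$; since \'etale pullback commutes with intersection cohomology, the stalk Poincar\'e polynomial of $\mathcal{IC}_{\tilde{X}(\D)}$ at $y$ equals $g(C_y(\D); t^2)$ by Section \ref{sec-sec-prem-prem}. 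On the other hand, taking stalks at $y$ in the decomposition of Lemma \ref{l-restr-fix}, the constant-sheaf summand contributes $g(\sigma; t^2)$ (its stalk at every point of $Y$) and the skyscraper at $y$ contributes $g(C_y(\D), \sigma; t^2)$, yielding the desired identity. Substituting into $(\star)$ and using $r = |\supp(\D)|$ gives
$$P_{\tilde{X}(\D)}(t) = (P_Y(t) - r)\, g(\sigma; t^2) + \sum_{y \in \supp(\D)} g(C_y(\D); t^2),$$
which is the claimed formula since $P_Y(t) - r = t^2 + 2\rho_g(Y)\, t + 1 - r$.

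The main obstacle is the third step: one must both transfer the toric local structure through the \'etale correspondence in Lemma \ref{lem-etet} so as to identify the intrinsic local Poincar\'e polynomial at $y$ with $g(C_y(\D); t^2)$, and carefully track the stalks of all summands in Lemma \ref{l-restr-fix}. Once the identity $g(\sigma; t^2) + g(C_y(\D), \sigma; t^2) = g(C_y(\D); t^2)$ is secured, the global hypercohomology computation reduces to a direct sum over the explicit decomposition and is routine.
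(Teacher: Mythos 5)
Your proof is correct and follows essentially the same route as the paper: reduce to the $\TT$-fixed locus via Lemma \ref{l-localaff} (after a finite quotient, harmless by Kirwan's lemma), insert the decomposition of Lemma \ref{l-restr-fix}, and take hypercohomology of the resulting constant and skyscraper summands on $Y$. The only difference is cosmetic: you re-derive the identity $g(C_{y}(\D),\sigma;t^{2}) = g(C_{y}(\D);t^{2}) - g(\sigma;t^{2})$ by a two-sided stalk computation at $y$ through the \'etale model of Lemma \ref{lem-etet}, whereas the paper simply quotes it from \cite{BM99} (it is immediate from the defining recursion for relative $g$-invariants, since $\sigma\times\{0\}$ is a facet of $C_{y}(\D)$); both are valid.
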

\begin{proof}
The proof of theorem follows from Lemma \ref{l-restr-fix} (after dividing by a finite subgroup of $\TT$), from the isomorphisms 
$$IH^j(\tilde{X}(\D); \QQ) \simeq \mathbb{H}^{j}(\Gamma, \iota^{\star}\mathcal{IC}_{\tilde{X}(\D)})\text{ for any }j\in \ZZ,$$
provided by Lemma \ref{l-localaff}, and by the equality $$g(C_{y}(\D),\sigma; t^{2}) = g(C_{y}(\D); t^{2}) - g(\sigma; t^{2})$$ (see the comment after \cite[Section 2, Proposition 2]{BM99}).
\end{proof}
\subsection{Main result}\label{sec-main} 
In this section, we gather all our computations that we previously did in order 
to describe the multiplicities in the decomposition
theorem for the contraction map (Theorem \ref{t-hvector}).  
Before that, we spend some times on the combinatorics of posets (see \cite{Sta97} for details).  
Let $(\pos, \leq)$ be a finite poset and set 
$$\int(\pos) =\{(a,b)\in \pos^2\,|\, a\leq b\}.$$
By \emph{incidence algebra} $\inc(\pos)$ of the poset $\pos$ we mean the set of functions $\alpha: \int(\pos)\rightarrow \ZZ[t, t^{-1}]$ endowed with the convolution product 
$$(\alpha\star\beta)(a,b) = \sum_{a\leq c\leq b}\alpha(a,c)\cdot \beta(c, b).$$
This product is associative and admits an element $1$, which is the restriction of the Kronecker symbol to $\int(\pos)$. Observe further that (from the proof of \cite[Proposition 3.6.2]{Sta97}) the element $\alpha\in \inc(\pos)$
is invertible if and only if the image of the diagonal by $\alpha$  in $\pos^{2}$ is contained in $(\ZZ[t, t^{-1}])^{\times}$. 

We will use the following inversion formula. Let $\alpha\in \inc(\pos)$ be an invertible element and consider two 
maps $\varepsilon, \eta$ from $\pos$ to $\ZZ[t, t^{-1}]$ subject to the relation
$$\varepsilon(a) = \sum_{b \in \pos, b\leq a}\alpha^{-1}(b,a)\cdot\eta(b)\text{ for every }a\in \pos.\text{ Then }\eta(b) = \sum_{a \in \pos, a\leq b}\alpha(a,b)\cdot\varepsilon(a)$$ for every $b\in \pos$.
\\ 

We again get back to the situation of the introduction. We have a normal variety $X$ with a faithful $\TT$-action of complexity one attached to a divisorial fan $\EE$ over $Y$, the map $\pi: \tilde{X}\rightarrow X$ stands for the contraction map, and the set $E\subseteq \tilde{X}$ is the image of the exceptional locus of $\pi$. We also assume that $X$ is complete. We emphasize that since the action of the torus is of complexity one, the set $E$ has only a finite number of orbits. We will use the poset isomorphism 
$HF(\EE)\simeq \mathcal{O}(E)$ discussed at the end of Section \ref{sec-r-tvar}.
\\

For every $O\in \mathcal{O}(E)$, we have dealt in  Lemma \ref{l-structure} with 
the open subset $\tilde{X}_{O}$ and the set $X_{O}:= \pi(\tilde{X}_{O})$, which is an affine Zariski open subset of $X$. In terms of the combinatorial object $HF(\EE)$, the variety $\tilde{X}_{O}$
is nothing but the relative spectrum of the sheaf associated with $\D_{O}$, where the torus orbit $O\in \mathcal{O}(E)$ corresponds to the hyperface $C = C(\D_{O})$.
We now introduce certain generating functions which will play an important role 
later on.
\begin{definition}
For each pair $(O_{1}, O_{2})\in \int(\mathcal{O}(E))$ (that is, $O_{2}\subseteq \bar{O}_{1}$) we set 
$$R_{O_{1}, O_{2}}(t) = \sum_{i\in \ZZ}\dim_{\QQ}\mathcal{H}^{i}(IC_{\bar{O}_{1}})_{x_2}t^{i}\in \ZZ[t, t^{-1}],$$
where $x_{2}\in O_{2}$. Note that $R_{O_{1}, O_{2}}$ does not depend on the choice of the point $x_{2}$.
\end{definition}
\begin{lemma}\label{l-X12}
The map $$R: \int(\mathcal{O}(E))\rightarrow \ZZ[t, t^{-1}], \,\, (O_{1}, O_{2})\mapsto R_{O_{1}, O_{2}}(t)$$
is an invertible element of the algebra $\inc(\mathcal{O}(E))$.
Let 
$$U_{1,2}: = \bigcup_{O\text{ orbit  of }\bar{O}_{1},\, O_{2}\subseteq \bar{O}}O$$
and denote by $\kappa: X_{1,2}\rightarrow U_{1,2}$ the normalization map.
Then $X_{1,2}$ is affine and for any point $x\in \kappa^{-1}(O_{2})$ we have that
$$R_{O_{1}, O_{2}}(t) = \sum_{i\in \ZZ}\dim_{\QQ}\mathcal{H}^{i}(IC_{X_{1,2}})_{x}t^{i}.$$
\end{lemma}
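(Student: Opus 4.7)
The plan is to split the statement into its two components and handle them in turn.

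For the invertibility of $R$ in $\inc(\mathcal{O}(E))$, I will appeal to the standard invertibility criterion recalled just before the lemma: an element of the incidence algebra is invertible if and only if its values on the diagonal $\{(O,O)\,|\,O\in\mathcal{O}(E)\}$ are units in $\ZZ[t,t^{-1}]$. So it suffices to compute $R_{O,O}(t)$. The orbit $O$ is the dense (open) orbit of its own closure $\bar{O}$, and being a homogeneous space it is in particular smooth; thus it lies in the smooth locus of $\bar{O}$. Using the open stratum condition in the definition of $IC_{\bar{O}}$, one gets $\mathcal{H}^{-\dim\bar{O}}(IC_{\bar{O}})_{x}=\QQ$ and $\mathcal{H}^{i}(IC_{\bar{O}})_{x}=0$ for $i\neq -\dim\bar{O}$ at any $x\in O$, so that $R_{O,O}(t)=t^{-\dim\bar{O}}$, which is invertible in $\ZZ[t,t^{-1}]$. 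Hence $R$ is invertible.

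For the second assertion, the first step is to identify $U_{1,2}$ with a nice affine piece. By Lemma \ref{l-structure} the open set $X_{O_{2}}\subseteq X$ is affine and has $O_{2}$ as unique closed orbit; in particular, an orbit $O$ of $X$ is contained in $X_{O_{2}}$ if and only if $O_{2}\subseteq\bar{O}$. Consequently
\[
U_{1,2}=\bigcup_{\substack{O\subseteq\bar{O}_{1}\\ O_{2}\subseteq\bar{O}}}O \;=\;\bar{O}_{1}\cap X_{O_{2}},
\]
which is a Zariski closed subvariety of the affine variety $X_{O_{2}}$, hence affine. Since normalization preserves affineness, $X_{1,2}$ is affine as claimed.

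It remains to compare the stalks. Because $U_{1,2}$ is a dense Zariski open subset of the irreducible variety $\bar{O}_{1}$, the restriction of $IC_{\bar{O}_{1}}$ to $U_{1,2}$ coincides with $IC_{U_{1,2}}$, so $\mathcal{H}^{i}(IC_{\bar{O}_{1}})_{x_{2}}=\mathcal{H}^{i}(IC_{U_{1,2}})_{x_{2}}$. Next, the normalization map $\kappa:X_{1,2}\rightarrow U_{1,2}$ is finite and birational, and one has the standard identification $\kappa_{\star}IC_{X_{1,2}}\simeq IC_{U_{1,2}}$ (finite $t$-exact push-forward of a simple perverse sheaf which restricts to $\QQ[\dim U_{1,2}]$ on the smooth locus). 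Finally, using the $\TT$-equivariant local decomposition of Lemma \ref{l-structure} restricted to $\bar{O}_{1}$, the slice $U_{1,2}$ is unibranch along $O_{2}$, so $\kappa^{-1}(O_{2})$ is a single $\TT$-orbit mapping isomorphically onto $O_{2}$; in particular $\kappa^{-1}(x_{2})=\{x\}$, and proper base change gives $\mathcal{H}^{i}(IC_{U_{1,2}})_{x_{2}}\simeq\mathcal{H}^{i}(IC_{X_{1,2}})_{x}$. Combining these three identifications yields the formula for $R_{O_{1},O_{2}}(t)$.

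The main obstacle I expect is justifying the two facts used in the last paragraph, namely $\kappa_{\star}IC_{X_{1,2}}\simeq IC_{U_{1,2}}$ and the unibranch property of $U_{1,2}$ at $x_{2}$: the former is a general fact about normalizations of irreducible varieties, while the latter requires exploiting the local product structure $X_{O_{2}}\simeq\TT_{O_{2}}\times^{\Gamma_{x_{2}}}X_{1}$ of Lemma \ref{l-structure}, which reduces the analysis of $U_{1,2}$ near $x_{2}$ to the behaviour of the slice $\bar{O}_{1}\cap X_{1}$ at the isolated fixed-point-image, where the unique-closed-orbit condition forces a single branch.
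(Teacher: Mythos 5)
Your overall route coincides with the paper's: you get invertibility from the diagonal values $R_{O,O}(t)=t^{-\dim O}$ via the open-stratum normalization of $IC_{\bar O}$, you identify $U_{1,2}=\bar O_{1}\cap X_{O_{2}}$ using Lemma \ref{l-structure} (whence affineness of $X_{1,2}$), and you then run the chain $IC_{\bar O_{1}}|_{U_{1,2}}\simeq IC_{U_{1,2}}\simeq \kappa_{\star}IC_{X_{1,2}}$ followed by a stalk comparison. All of this matches the paper's proof and is correct.

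The gap is in your final step, where you claim that $U_{1,2}$ is unibranch along $O_{2}$, hence $\kappa^{-1}(x_{2})=\{x\}$, ``because the unique-closed-orbit condition forces a single branch.'' That principle is false: the affine nodal cubic with its natural $\CC^{\star}$-action has the node as its unique closed orbit (a fixed point lying in every orbit closure), yet it has two branches there and its normalization is $2{:}1$ over the node. The issue is not cosmetic here, because $IC_{U_{1,2}}\simeq\kappa_{\star}IC_{X_{1,2}}$ with $\kappa$ finite gives $\mathcal{H}^{i}(IC_{U_{1,2}})_{x_{2}}\simeq\bigoplus_{x'\in\kappa^{-1}(x_{2})}\mathcal{H}^{i}(IC_{X_{1,2}})_{x'}$, so if $d:=|\kappa^{-1}(x_{2})|>1$ the asserted formula for $R_{O_{1},O_{2}}(t)$ would acquire an extra factor $d$. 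What one can actually show easily --- and all the paper claims --- is that $\kappa^{-1}(O_{2})$ is the unique closed orbit $O_{1,2}$ of the normal toric variety $X_{1,2}$ and that $\kappa|_{O_{1,2}}:O_{1,2}\rightarrow O_{2}$ is \emph{\'etale}; its degree is $|\pi_{0}(\TT_{x_{2}}/\TT_{x_{1}})|$ (stabilizers in a normal toric variety with faithful action being connected, the stabilizer of $x$ is the identity component of that of $x_{2}$ in $\TT/\TT_{x_{1}}$), where $x_{1}\in O_{1}$. So $d=1$ does hold when $\TT_{x_{2}}$ is connected, which is the hypothesis in force when the lemma is used in Theorem \ref{t-hvector}, but your argument needs this stabilizer computation (via the product structure of Lemmata \ref{l-pointf} and \ref{l-structure}) in place of the unibranch slogan; alternatively, follow the paper and compare stalks after restricting to an open subset of $X_{1,2}$ that isolates a single preimage of $x_{2}$.
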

\begin{proof}
The poset isomorphism $HF(\EE)\simeq \mathcal{O}(E)$ implies that $U_{1,2} =  X_{O_{2}}\cap \bar{O}_{1}$,
where $X_{O_{1}}$ is the affine variety defined in Lemma \ref{l-structure}. Since $U_{1,2}$ is an affine torus embedding, its normalization $X_{1,2}$ is an affine toric variety (compare \cite[Theorem 1.3.5 and Proposition 1.3.8]{CLS11}).  Note that the toric variety $X_{1,2}$ has a unique closed orbit $O_{1,2}$, the preimage $\kappa^{-1}(O_{2})$ is therefore equal to $O_{1,2}$, and the restriction $\kappa_{|O_{1,2}}: O_{1,2}\rightarrow O_{2}$ is an \'etale morphism. Since intersection cohomology is invariant under normalization (see the proof of \cite[Corollary 8.2.31]{HTT08}),
it follows that 
$$\mathcal{H}^{i}(IC_{\bar{O}_{1}})_{x_2}\simeq \mathcal{H}^{i}(IC_{U_{1,2}})_{x_2}\simeq \mathcal{H}^{i}(IC_{X_{1,2}})_{x},$$
where $x$ is a point of $O_{1,2}$. Indeed, take any Zariski open subset $U\subseteq X_{1,2}$ containing $x$ such that 
$\kappa^{-1}(\{x_{2}\})\cap U =  \{x\}$. Set $V =  \kappa(U)$ (which is a Zariski open subset of $U_{1,2}$). Then from \cite[Corollary 8.2.31]{HTT08} and using proper base change,
we arrive at 
$$\mathcal{H}^{i}(IC_{U_{1,2}})_{x_2}\simeq \mathcal{H}^{i}(i_{x_{2}}^{\star}IC_{V})\simeq \mathcal{H}^{i}(i_{x_{2}}^{\star}\kappa_{\star}IC_{U}) \simeq  \mathcal{H}^{i}(i_{x}^{\star}IC_{U})\simeq  \mathcal{H}^{i}(IC_{X_{1,2}})_{x},$$
where $i_{x}: \{x\}\rightarrow U$  and $i_{x_{2}}: \{x_{2}\}\rightarrow V$ are the natural inclusions. This concludes the proof of a statement of the lemma.

 In order to prove that $R$ is invertible, we need to 
have $R_{O_{1}, O_{2}}(t)\in (\ZZ[t, t^{-1}])^{\times}$ whenever $O_{1} = O_{2}$. So assume that $O_{1} = O_{2}$. In this case, $O_{2}$
is the open orbit of $\bar{O}_{1}$. Therefore
$$\mathcal{H}^{i}(IC_{\bar{O}_{1}})_{x_2} = \mathcal{H}^{i}(IC_{O_{2}})_{x_2} = \left\{
    \begin{array}{ll}
        1 & \mbox{if }  i = -\dim\, O_{2} \\
        0 & \mbox{if } i \neq -\dim\, O_{2}. 
    \end{array}
\right.$$
This gives $R_{O_{1}, O_{2}}(t) = t^{-\dim\, O_{2}}$, finishing the proof of our lemma.
\end{proof}  
\begin{remark}
Note that using the bijection $HF(\EE)\simeq \mathcal{O}(E)$ and the description
of torus orbits in \cite[Section 10]{AH06}, one can explicitly describe the toric variety $X_{1,2}$ in terms
of $\EE$. Indeed, using Lemma \ref{l-fiber}, one sees that the normalization of $X_{O_{2}}\cap\bar{O_{1}}$, which is $X_{1,2}$, is up 
to a finite covering the toric variety for the torus $\TT_{O_{1}}$ associated to the maximal tail of the hyperfaces of $C_{O_{2}}$ in
$HF(\EE)$ that contains $C_{O_{1}}$. Here $C_{O_{i}}$ is the element of $HF(\EE)$ associated with $O_{i}\in \mathcal{O}(\EE)$. Therefore, according to \cite{DL91, Fie91}, the Laurent polynomial $R_{O_{1}, O_{2}}(t)$ is a combinatorial object with respect to $\EE$.
\end{remark}

Our next result summarizes our computation for determining the
multiplicities of the (shifted) simple perverse sheaves involving in the decomposition 
theorem for the contraction map. In the statement of this theorem, we will consider  the notations employed in \ref{t-contfree}. Note
that our approach was inspired by \cite[Section 7]{CMM15}.
\begin{theorem}\label{t-hvector}
Let $X$ be a complete normal variety with a faithful $\TT$-action of complexity one.
Denote by $E\subseteq X$ the image of the exceptional locus of the contraction map, and let $\tilde{\EE}$
be the divisorial fan describing the contraction space $\tilde{X}$. Without loss of generality, we may and do assume (according to \ref{t-main} (i))
that the points of $E$ have connected stabilizers.
Then 
$$P_{X}(t) = ((1-r)t^{2} + 2\rho_{g}(Y)t + 1-r)h(\Sigma(\tilde{\EE}); t^2) 
+ \sum_{y\in \supp(\tilde{\EE})}h(\tilde{\EE}_{y}; t^{2}) $$ $$- \sum_{O\in \mathcal{O}(E)}S_{O}(t) P_{\bar{O}}(t)t^{\dim\, X - \dim\, O},$$
where $r$ is cardinality of $\supp(\tilde{\EE})$ and $S_{O}(t)= \sum_{b\in \ZZ}s_{b, O}t^{b}$ is the generating function of the multiplicities of Theorem \ref{t-main}. The Laurent polynomial $S_{O}(t)$ is obtained by the following inductive procedure. Let $\mathcal{O}(E)$ be the poset of torus orbits of $E$.
 Let $$\mathcal{O}(E)\rightarrow HF(\EE),\,\,O\mapsto C(\D_{O})$$
be the poset isomorphism considered at the end of Section \ref{sec-r-tvar}, where $\EE$ is a divisorial fan describing $X$.
For each $O\in \mathcal{O}(E)$, the polyhedral divisor  $\D_{O}$ is decomposable (see Definition \ref{def-handy})
and we  write $\bar{\D}_{O}$ for the associated pointed polyhedral divisor (see Definition \ref{def-pointed}). By \ref{g-vectorth} we have that
$$P_{\tilde{X}(\bar{\D}_{O})}(t) = (t^{2} + 2\rho_{g}(Y) + 1 - r)\cdot g(\sigma_{O}; t^{2}) + \sum_{y\in \supp(\bar{\D}_{O})} g(C_{y}(\bar{\D}_{O}); t^{2}),$$
where $\sigma_{O}$ is the tail of $\bar{\D}_{O}$, while by \ref{t-fixedpointt} the polynomial $P_{X(\bar{\D}_{O})}(t)$ is expressed in terms of the Poincar\'e polynomial of a combinatorially determined projective $\TT$-variety of complexity one  of smaller dimension. Finally, 
$$S_{O_{1}}(t) = \sum_{O\prec O_{1}}R^{-1}_{O, O_{1}}(t)\left(P_{\tilde{X}(\bar{\D}_{O})}(t) -P_{X(\bar{\D}_{O})}(t)\right)t^{-\dim\, X}$$
for any orbit $O_{1}\in \mathcal{O}(E)$, where $R^{-1}_{O, O_{1}}$ denotes the inverse 
of $R_{O, O_{1}}$ in the incidence algebra $\inc(\mathcal{O}(E))$ of the poset $\mathcal{O}(E)$.
\end{theorem}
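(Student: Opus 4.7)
The plan is to combine the global decomposition given by Theorem~\ref{t-main}\,(ii) with local stalk computations, and to extract the multiplicities $s_{b,O}$ by a M\"obius-type inversion in the incidence algebra $\inc(\mathcal{O}(E))$. First, I would take the hypercohomology of both sides of the isomorphism
$$\pi_{\star}IC_{\tilde{X}}\simeq IC_{X}\oplus\bigoplus_{O\in\mathcal{O}(E)}\bigoplus_{b\in\ZZ}(\iota_{O})_{\star}IC_{\bar{O}}^{\oplus s_{b,O}}[-b].$$
Tracking the shift $[-\dim X]$ entering the definition of $P_{X}(t)$ (and observing that $\dim\tilde{X}=\dim X$), the summand $(\iota_{O})_{\star}IC_{\bar O}[-b]$ contributes exactly $s_{b,O}\, t^{b+\dim X-\dim O}\, P_{\bar O}(t)$ to $P_{\tilde X}(t)$, so rearranging yields
$$P_{X}(t)=P_{\tilde{X}}(t)-\sum_{O\in\mathcal{O}(E)}S_{O}(t)\, P_{\bar{O}}(t)\, t^{\dim X-\dim O}.$$
Since $\tilde{X}$ is complete and contraction-free, Theorem~\ref{t-contfree} supplies $P_{\tilde{X}}(t)$ precisely as the first two combinatorial summands appearing in the statement, so what remains is to pin down the $S_{O}(t)$.

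Next, to determine $S_{O_{1}}(t)$ for a fixed orbit $O_{1}\in\mathcal{O}(E)$, I would take the stalks of the decomposition at a point $x_{1}\in O_{1}$. Under the connected-stabilizer hypothesis, the local product structure of Lemma~\ref{l-structure} yields $\tilde{X}_{O_{1}}\simeq\TT_{O_{1}}\times\tilde{X}_{1}$ and $X_{O_{1}}\simeq\TT_{O_{1}}\times X_{1}$, and the dictionary $HF(\EE)\simeq\mathcal{O}(E)$ combined with the splitting of Lemma~\ref{l-handy} identifies $\tilde{X}_{1}=\tilde{X}(\bar{\D}_{O_{1}})$ and $X_{1}=X(\bar{\D}_{O_{1}})$. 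Lemma~\ref{l-loc} then reduces the stalk of $\pi_{\star}IC_{\tilde{X}}$ at $x_{1}$, up to a degree shift by $\dim O_{1}$, to the stalk of $(\pi_{1})_{\star}IC_{\tilde{X}_{1}}$ at the attractive fixed point $x_{1}\in X_{1}$. Since $(\pi_{1})_{\star}IC_{\tilde{X}_{1}}$ is $\CC^{\star}$-equivariant and $x_{1}$ is attractive, the argument of Lemma~\ref{l-localaff} identifies this stalk with $\mathbb{H}^{\star}(\tilde{X}_{1},IC_{\tilde{X}_{1}})$, and similarly the stalk of $IC_{X}$ at $x_{1}$ becomes $\mathbb{H}^{\star}(X_{1},IC_{X_{1}})$. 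Translated into Poincar\'e polynomials, this yields
$$\bigl(P_{\tilde{X}(\bar{\D}_{O_{1}})}(t)-P_{X(\bar{\D}_{O_{1}})}(t)\bigr)t^{-\dim X}=\sum_{O\prec O_{1}}R_{O,O_{1}}(t)\, S_{O}(t),$$
where $R_{O,O_{1}}(t)$ is the stalk generating function of $IC_{\bar O}$ at $x_{1}$ as in Lemma~\ref{l-X12}. The two polynomials on the left-hand side are then supplied by Theorem~\ref{g-vectorth} (for $P_{\tilde{X}(\bar{\D}_{O_{1}})}$) and by Proposition~\ref{t-fixedpointt} (for $P_{X(\bar{\D}_{O_{1}})}$, computed inductively on dimension via $P_{X(\EE^{u})}$).

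The final step is the inversion. By Lemma~\ref{l-X12}, $R_{O,O}(t)=t^{-\dim O}$ is a unit in $\ZZ[t,t^{-1}]$, hence $R\in\inc(\mathcal{O}(E))$ is invertible, and the M\"obius-type formula recalled at the beginning of \S\ref{sec-main} immediately produces
$$S_{O_{1}}(t)=\sum_{O\prec O_{1}}R^{-1}_{O,O_{1}}(t)\bigl(P_{\tilde{X}(\bar{\D}_{O})}(t)-P_{X(\bar{\D}_{O})}(t)\bigr)t^{-\dim X},$$
as claimed. The principal obstacle I anticipate is the bookkeeping needed to reconcile the three simultaneous cohomological shifts (the intrinsic shift $[-\dim X]$, the shift by $\dim O_{1}$ produced by Lemma~\ref{l-loc}, and the $[-b]$'s in the decomposition), together with a rigorous identification of the local slice $\tilde{X}_{1}$ arising from Lemma~\ref{l-structure} with the global affine total space $\tilde{X}(\bar{\D}_{O_{1}})$; this latter identification genuinely relies on both the connected-stabilizer assumption and Lemma~\ref{l-handy}, which isolates the intrinsic polyhedral datum $\bar{\D}_{O_{1}}$ and permits the attractive-fixed-point machinery of Lemma~\ref{l-localaff} to apply to the $\CC^{\star}$-equivariant complex $(\pi_{1})_{\star}IC_{\tilde{X}_{1}}$.
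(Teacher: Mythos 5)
Your proposal follows essentially the same route as the paper's proof: take hypercohomology of the decomposition from Theorem \ref{t-main}\,(ii) and apply Theorem \ref{t-contfree} to $\tilde{X}$ for the global part, then extract the $S_{O}(t)$ by comparing stalks at points of each orbit via the local product structure of Lemma \ref{l-structure} together with Lemma \ref{l-localaff}, and finally invert in the incidence algebra using the invertibility of $R$ from Lemma \ref{l-X12}. The shift bookkeeping and the identification of the local slice with $\tilde{X}(\bar{\D}_{O})$ that you flag as the main obstacles are handled in the paper exactly as you outline, so the argument is correct.
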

\begin{proof}
Our starting point is the isomorphism
$$\pi_{\star}IC_{\tilde{X}}\simeq IC_{X}\oplus \bigoplus_{O\in\mathcal{O}(E)}\bigoplus_{b\in\ZZ}(\iota_{O})_{\star}IC_{\bar{O}}^{\oplus s_{b, O}}[-b]$$
of Theorem  \ref{t-main} provided by the decomposition theorem.
Taking the hypercohomology on both sides gives
$$P_{X}(t) = P_{\tilde{X}}(t) - \sum_{O\in \mathcal{O}(E)}S_{O}(t) P_{\bar{O}}(t)t^{\dim\, X - \dim\, O}.$$
Our first formula follows from Theorem \ref{t-contfree} applied to the complete contraction-free $\TT$-variety $\tilde{X}$. For expressing the generating function $S_{O}$ in our inductive process, we need to look at the stalks of the cohomology 
sheaves at a point lying in the image of the exceptional locus. More precisely, let $O_{2}\in \mathcal{O}(E)$ and pick a point $x_{2}\in O_{2}$. Then  
$$\dim_{\QQ} \mathcal{H}^{i}(\pi_{\star}IC_{X})_{x_{2}} = \dim_{\QQ} \mathcal{H}^{i}(IC_{X})_{x_{2}} + \sum_{O_{1}\prec O_{2}}\sum_{b\in\ZZ}s_{b, O_{1}}\dim_{\QQ}\mathcal{H}^{i - b}(IC_{\bar{O}_{1}})_{x_{2}}.$$
By Lemma \ref{l-structure}, there is a Zariski open subset $\tilde{X}_{O_{2}}\subseteq \tilde{X}$ containing $\pi^{-1}(O_{2})$ such that it decomposes as the product
$\TT_{O_{2}}\times\tilde{X}(\bar{\D}_{O_{2}})$. Let $X_{O_{1}}$ be the image of $\tilde{X}_{O_{2}}$ by the map $\pi$. Then $X_{O_{2}}\subseteq X$ is also a Zariski open subset
and we also have $X_{O_{2}} \simeq \TT_{O_{2}}\times X(\bar{\D}_{O_{2}})$. Therefore
replacing $\tilde{X}$ by $\tilde{X}_{{O}_{2}}$ and $X$ by $X_{O_{2}}$ and expressing the last formula in terms 
of Laurent polynomials we obtain from Lemma \ref{l-localaff} the formula
$$\left(P_{\tilde{X}(\bar{\D}_{O_{2}})}(t) -P_{X(\bar{\D}_{O_{2}})}(t)\right)t^{-\dim\, X} = \sum_{O_{1}\prec O_{2}}R_{O_{1}, O_{2}}(t)\cdot  S_{O_{1}}(t).$$
By virtue of Lemma \ref{l-X12}, the map 
$$R: \int(\mathcal{O}(E))\rightarrow \ZZ[t, t^{-1}], \,\, (O_{1}, O_{2})\mapsto R_{O_{1}, O_{2}}(t)$$ is invertible in the incidence algebra $\inc(\mathcal{O}(E))$. Thus,
we conclude by using the inversion formula mentioned in the beginning of Section
\ref{sec-main}.
\end{proof}
\subsection{The case of threefolds.} \label{subsec-dim3}
 We now illustrate
Theorem \ref{t-hvector} in the special case of a $3$-dimensional complete normal $\TT$-variety $X$ of
complexity one.  Note that here $\TT$ is the torus $(\CC^{\star})^{2}$ and,  as before, we choose a divisorial fan $\EE$ over $(Y, N)$ describing our threefold $X$. Without loss of generality, we may and do assume that the stabilizers of points of  the image $E$ of the exceptional locus of the contraction map are connected. Note that
the orbits of $E$ are of dimension $\leq 1$.  For each orbit of dimension $1$, the normalization of its Zariski closure in $X$ is isomorphic to the projective line. Keeping this in mind, we wish to determine the 
corresponding Laurent polynomial $S_{O}(t)$ for every element $O\in\mathcal{O}(E)$.
We consider two cases. 
\\

\emph{Case 1.} Let $O_{2} \in \mathcal{O}(E)$ be a torus orbit of dimension $1$.
Then, in this case, $X(\bar{\D}_{O_{2}}), \tilde{X}(\bar{\D}_{O_{2}})$ are affine $\CC^{\star}$-surfaces,
and by Theorem \ref{g-vectorth}, Lemma \ref{l-X12}, and Example \ref{e-elliptic}, it follows that $S_{O_{2}}(t) = 1$.
Indeed, by Theorem \ref{g-vectorth} one has
$$ P_{\tilde{X}(\bar{\D}_{O_{2}})}(t) =  t^{2} + 2\rho_{g}(Y) + 1$$ 
and by Example \ref{e-elliptic} one has
$$ P_{X(\bar{\D}_{O_{2}})}(t) =  1 + 2\rho_{g}(Y).$$
Therefore 
$$ t^{-1} = t^{-3}\left(P_{\tilde{X}(\bar{\D}_{O_{2}})}(t) -P_{X(\bar{\D}_{O_{2}})}(t)\right) = R_{O_{2}, O_{2}}(t) S_{O_{2}}(t).$$
But from  Lemma \ref{l-X12} we have $R_{O_{2}, O_{2}}(t) =  t^{-1}$, whence $S_{O_{2}}(t) = 1$.
\\

\emph{Case 2.}
Let $O_{2}\in \mathcal{O}(E)$ be a torus orbit of dimension $0$. Using the preceding case, we have on the one hand 
$$\sum_{O_{1}\prec O_{2}}R_{O_{1}, O_{2}}(t)\cdot  S_{O_{1}}(t) =  R_{O_{2}, O_{2}}(t) S_{O_{2}}(t) + \sum_{O_{1}\prec O_{2}, O_{1}\neq O_{2}}R_{O_{1}, O_{2}}(t)\cdot  S_{O_{1}}(t)$$

\begin{equation}\label{eq}= S_{O_{2}}(t) + \sum_{O_{1}\prec O_{2}, O_{1}\neq O_{2}} R_{O_{1}, O_{2}}(t) = S_{O_{2}}(t)  + \eta_{O_{2}}t^{-1},
\end{equation}
where $\eta_{O_{2}}:= |\{O_{1}\in \mathcal{O}(E)\,|\, O_{1}\prec O_{2}\text{ and } O_{1}\neq O_{2}\}|.$
On the other hand 
$$\sum_{O_{1}\prec O_{2}}R_{O_{1}, O_{2}}(t)\cdot  S_{O_{1}}(t) = q_{O_{2}}(t),$$
 where $$ q_{O_{2}}(t):= t^{-3} \left(P_{\tilde{X}(\bar{\D}_{O_{2}})}(t) -P_{X(\bar{\D}_{O_{2}})}(t)\right).$$ 
So from Equation \ref{eq} we have $S_{O_{2}}(t) = q_{O_{2}}(t)  - \eta_{O_{2}}t^{-1}.$
\\

\emph{Conclusion.} Denote by  $\mathcal{O}_{2}(E)$ (respectively,  $\mathcal{O}_{3}(E)$) the set orbits 
of $E$ of codimension $2$ (respectively, $3$) in $X$. Our analysis shows that 
$$\sum_{O\in \mathcal{O}_{2}(E)}S_{O}(t) P_{\bar{O}}(t)t^{\dim\, X - \dim\, O} =  |\mathcal{O}_{2}(E)|(t^{2} +1)t^{2}.$$
Moreover, using Poincar\'e duality, we must have from our computation:
$$\sum_{O\in \mathcal{O}_{3}(E)}S_{O}(t) P_{\bar{O}}(t)t^{\dim\, X - \dim\, O} =  \sum_{O\in \mathcal{O}_{3}(E)}(q_{O}(t)t^{3} - \eta_{O}t^{2}) =0.$$
Indeed,  observe that each polynomial $q_{O}(t)t^{3} - \eta_{O}t^{2}$  is of degree $\leq 2$ (use Example \ref{e-aff3fold} and  Theorem \ref{g-vectorth}). Therefore by Theorem \ref{t-hvector} and \cite[Section 1, Remark iii)]{Fie91} we obtain the following explicit result.
\begin{theorem}\label{theore-dim3}
Let $X =  X(\EE)$ be any complete normal threefold with a faithful action of $\TT= (\CC^{\star})^{2}$. Considering the notation set before,
we have the formula:
$$P_{X}(t) = ((1-r)t^{2} + 2\rho_{g}(Y)t + 1-r)(t^{4} + (\delta(\Sigma(\tilde{\EE}))-2)t^{2} + 1)$$ 
$$+ \sum_{y\in \supp(\tilde{\EE})}(t^{6} + (\delta(\tilde{\EE}_{y}) - 3)t^{4} + (\delta(\tilde{\EE}_{y}) - 3)t^{2} + 1) - |\mathcal{O}_{2}(E)|(t^{2} +1)t^{2},$$
where $r$ is cardinality of $\supp(\tilde{\EE})$, the numbers $\delta(\Sigma(\tilde{\EE})), \delta(\tilde{\EE}_{y})$ are respectively the numbers of rays of the fans $\Sigma(\tilde{\EE}), \tilde{\EE}_{y}$, 
 and $\mathcal{O}_{2}(E)$ is the set of orbits of $E$ of codimension $2$ in $X$.
\end{theorem}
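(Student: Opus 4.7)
The plan is to specialize Theorem \ref{t-hvector} to dimension three by computing the multiplicity Laurent polynomials $S_O(t)$ explicitly. Since $\dim\,X = 3$ and the general $\TT$-orbit has dimension two, every orbit in $E$ has dimension at most one, so $\mathcal{O}(E) = \mathcal{O}_2(E) \sqcup \mathcal{O}_3(E)$ by codimension in $X$. I would handle these strata in order of increasing codimension, using the inversion formula of Theorem \ref{t-hvector}.

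For a one-dimensional orbit $O_2 \in \mathcal{O}_2(E)$, the pointed polyhedral divisor $\bar{\D}_{O_2}$ has rank-one full-dimensional tail on a smooth projective curve $Y$. All cones involved are of dimension at most two, so every $g$-polynomial equals $1$; Theorem \ref{g-vectorth} gives $P_{\tilde{X}(\bar{\D}_{O_2})}(t) = t^2 + 2\rho_g(Y)t + 1$, Example \ref{e-elliptic} gives $P_{X(\bar{\D}_{O_2})}(t) = 1 + 2\rho_g(Y)t$, and $R_{O_2,O_2}(t) = t^{-1}$ from Lemma \ref{l-X12} combines with the inversion formula to yield $S_{O_2}(t) = 1$. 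Since intersection cohomology is invariant under normalization and the normalization of a complete one-dimensional torus-orbit closure is $\P^1$, one has $P_{\bar{O}_2}(t) = t^2 + 1$, producing the contribution $|\mathcal{O}_2(E)|(t^4+t^2)$ to be subtracted.

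For a fixed point $O_2 \in \mathcal{O}_3(E)$, the previous step gives $S_{O_1}(t) = 1$ for each one-dimensional $O_1 \prec O_2$ in $E$, and $R_{O_1, O_2}(t) = t^{-1}$ for the same reason. The inversion formula then yields $S_{O_2}(t) = q_{O_2}(t) - \eta_{O_2} t^{-1}$, where $q_{O_2}(t) := t^{-3}\bigl(P_{\tilde{X}(\bar{\D}_{O_2})}(t) - P_{X(\bar{\D}_{O_2})}(t)\bigr)$ and $\eta_{O_2}$ counts the one-dimensional orbits of $E$ whose closure contains $O_2$. The resulting contribution to $P_X(t)$ is $\sum_{O \in \mathcal{O}_3(E)}\bigl(q_O(t)t^3 - \eta_O t^2\bigr)$, and the crux of the proof is showing this sum vanishes.

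The main obstacle is precisely this vanishing, which I would establish by combining a degree estimate with Poincar\'e duality. By Example \ref{e-aff3fold} and Theorem \ref{g-vectorth} (using that the tail $\sigma_O$ is two-dimensional, so $g(\sigma_O; t^2) = 1$, and that the Cayley cones are three-dimensional with $g$-polynomial of degree one in $t^2$), each $q_O(t) t^3 - \eta_O t^2$ is a polynomial of degree at most two. Theorem \ref{t-hvector} expresses $P_X(t)$ as $P_{\tilde{X}}(t) - |\mathcal{O}_2(E)|(t^4+t^2)$ minus this codimension-three contribution; since $P_X(t)$ and $P_{\tilde X}(t)$ both satisfy Poincar\'e duality in degree six (as $X$ and $\tilde X$ are complete) and $|\mathcal{O}_2(E)|(t^4+t^2)$ is already palindromic of degree six, the codimension-three sum must itself be palindromic of degree six. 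A polynomial $p(t)$ of degree at most two satisfying $t^6 p(1/t) = p(t)$ is necessarily zero, yielding the claimed vanishing. I would conclude by substituting the toric identities $h(\Sigma(\tilde\EE); t^2) = t^4 + (\delta(\Sigma(\tilde\EE))-2)t^2 + 1$ and $h(\tilde\EE_y; t^2) = t^6 + (\delta(\tilde\EE_y)-3)t^4 + (\delta(\tilde\EE_y)-3)t^2 + 1$ from \cite[Section 1, Remark iii)]{Fie91} into Theorem \ref{t-contfree} applied to $\tilde X$, which produces exactly the stated formula.
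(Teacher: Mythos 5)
Your proposal is correct and follows essentially the same route as the paper: the same case analysis on orbit dimension giving $S_{O}(t)=1$ in codimension two and $S_{O}(t)=q_{O}(t)-\eta_{O}t^{-1}$ in codimension three, and the same combination of the degree-$\leq 2$ bound (from Example \ref{e-aff3fold} and Theorem \ref{g-vectorth}) with Poincar\'e duality to force the vanishing of the codimension-three contribution. Your explicit palindromicity argument merely spells out what the paper leaves implicit in its appeal to Poincar\'e duality.
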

Let us give a concrete example.  
\begin{example} \emph{A quadric threefold with a $2$-torus action.}
\begin{figure}[t]\label{pic76}
\includegraphics[width=8cm, height=1.5cm]{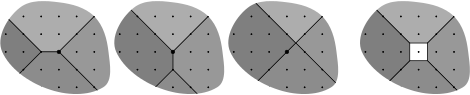}
\centering
\caption{}
\end{figure}
In this example, we assume that $N =  \ZZ^{2}$.
We take a divisorial fan $\EE$ over the projective line $\P^{1}_{\CC}$ from the classification of S\"uss in \cite[Section 5]{Sus14}.
The corresponding variety $X = X(\EE)$ is a smooth quadric threefold.
The support of each polyhedral divisor of $\EE$ is contained in the set $\{0, 1, \infty\}$ and $\EE$ is generated by
four polyhedral divisors $\D^{1}, \ldots, \D^{4}$ defined over $\P^{1}_{\CC}$:
\begin{itemize}
\item[(1)] The $\sigma_{1}$-polyhedral divisor $\D^{1}$ with 
 $$\D^{1}_{0} = \sigma_{1},\,\, \D^{1}_{1} =  {\rm Conv}((0,0), (0, -1)) + \sigma_{1},\,\, \D^{1}_{\infty} =  \left(\frac{1}{2}, \frac{1}{2}\right)+\sigma_{1}$$  and $\sigma_{1} = \QQ_{\geq 0}(1, 1) + \QQ_{\geq 0}(1, -1)$; 
\\

\item[(2)] The $\sigma_{2}$-polyhedral divisor $\D^{2}$ with 
$$\D_{0}^{2} =  {\rm Conv}((-1, 0), (0,0)) + \sigma_{2}, \,\, \D_{1}^{2} =  \{(0, -1)\} + \sigma_{2}, \, \,  \D^{2}_{\infty} =  \left(\frac{1}{2}, \frac{1}{2}\right)+\sigma_{2}$$
and $\sigma_{2} = \QQ_{\geq 0} (1, -1) + \QQ_{\geq 0} (-1, -1)$;
\\

\item[(3)] The $\sigma_{3}$-polyhedral divisor $\D^{3}$ with  
$$\D^{3}_{0} =  \{(-1, 0)\} +\sigma_{3}, \,\, \D_{1}^{3} = {\rm Conv}((0,0), (0, -1)) + \sigma_{3},\,\,  \D^{3}_{\infty} =  \left(\frac{1}{2}, \frac{1}{2}\right) +\sigma_{3}$$
and $\sigma_{3} = \QQ_{\geq 0} (-1, 1) +  \QQ_{\geq 0}(-1, -1)$;
\\

\item[(4)] The $\sigma_{4}$-polyhedral divisor $\D^{4}$ with  
$$\D_{0}^{4} =  {\rm Conv}((-1, 0), (0, 0)) + \sigma_{4}, \,\, \D_{1}^{4} = \sigma_{4}, \,\,  \D^{4}_{\infty} =  \left(\frac{1}{2}, \frac{1}{2}\right)+\sigma_{4}$$
and $\sigma_{4}=  \QQ_{\geq 0}(-1, 1) + \QQ_{\geq 0}(1, 1)$. 
\end{itemize}
The divisorial fan is illustrated in Figure 1 where the last diagram represents the degrees of the polyhedral divisors. 
Clearly, from the diagrams, the $\TT$-variety $X$ has $4$ fixed points, the Euler characteristic is $4$ and
using Lefchetz Hyperplane Theorem,  $P_{X}(t) =  1 + t^{2} + t^{4} + t^{6}$. Let us check this via Theorem 
\ref{theore-dim3}. First, we have 
$$r= 3,\,\,\delta(\tilde{\EE}_{0}) =  7,\,\, \delta(\tilde{\EE}_{1}) =  7,\,\, \delta(\tilde{\EE}_{\infty}) =  6,\,\, \delta(\Sigma(\tilde{\EE})) =  4,\text{ and }  |\mathcal{O}_{2}(E)| =  4.$$ 
Therefore we obtain
$$ ((1-r)t^{2} + 2\rho_{g}(Y)t + 1-r)(t^{4} + (\delta(\Sigma(\tilde{\EE}))-2)t^{2} + 1) =  -2(t^{6} + 3t^{4} + 3t^{2} +1),$$
$$\sum_{y\in \supp(\tilde{\EE})}(t^{6} + (\delta(\tilde{\EE}_{y}) - 3)t^{4} + (\delta(\tilde{\EE}_{y}) - 3)t^{2} + 1)
= 3t^{6} + 11t^{4} + 11t^{2} + 3 \text{ and }$$ 
$$- |\mathcal{O}_{2}(E)|(t^{2} +1)t^{2}= -4(t^{2}+t^{4}).$$
Hence adding all the contributions gives the expected result.  
\end{example}

{\em Acknowledgments.} 
We thank the referees for their careful remarks and for informing us about the mistake in  Lemma \ref{l-fiber} in a former version  of  the paper (see Remark \ref{rem-mistake} for more details).
We thank Javier Elizondo and Javier Fern\'andez de Bobadilla
for useful discussions. We also thank Mark Andrea de Cataldo and Simon Riche for email exchanges, and David Bradley-Williams for some corrections.
The research of the second author was conducted in the framework of the research training group
\emph{GRK 2240: Algebro-geometric Methods in Algebra, Arithmetic and Topology},
which is funded by the DFG. This article is supported by ERCEA Consolidator Grant 615655 - NMST and also by the Basque Government through the BERC 2014-2017 program and by Spanish Ministry of Economy and Competitiveness MINECO: BCAM Severo Ochoa excellence accreditation SEV-2013-0323. The second author was supported by the GRI CNRS Brazilian-French Network in Mathematics.

\end{document}